\def\dss{\displaystyle}
\numberwithin{equation}{section}
\newtheorem{theorem}{Theorem}[section]
\newtheorem{definition}[theorem]{Definition}
\newtheorem{proposition}[theorem]{Proposition}
\newcommand{\addappendix}{%
  \section*{\appendixname}
  \addcontentsline{toc}{section}{\appendixname}
  \counterwithin*{figure}{section}
  \stepcounter{section}
  \renewcommand{\thesection}{A}
  \renewcommand{\thefigure}{\thesection.\arabic{figure}}
}
\titlespacing\section{0pt}{0ex}{0pt}
\titlespacing\subsection{0pt}{1ex plus 4pt minus 2pt}{0pt}
\titlespacing\subsubsection{0pt}{1ex plus 4pt minus 2pt}{0pt}
\newcommand{\RR}{\mathbb{R}}
\begin{document}
\title{Modeling the impact of hospitalization-induced behavioral changes on SARS-COV-2 spread in New York City}
\author{Alice Oveson$^{a}$, Michelle Girvan$^{b}$, and Abba B. Gumel$^{a,c,}$\footnote{Corresponding author: Email: {\it agumel@umd.edu}}
\begin{flushleft}
{\it {\small $^{a}$  Department of Mathematics, University of Maryland, College Park, MD, 20742, USA}}\newline
{\it {\small $^{b}$  Department of Physics, University of Maryland, College Park, MD, 20742, USA}}\newline
{\it{\small $^{c}$ Department of Mathematics and Applied Mathematics, University of Pretoria, Pretoria, 0002, South Africa}}
\end{flushleft}
\vspace{-1.2cm}
    }
\date{\today}
\maketitle
\vspace{-1.2cm}
\noindent
\begin{abstract}
\noindent The COVID-19 pandemic, caused by SARS-CoV-2, highlighted heterogeneities in human behavior and attitudes of individuals with respect to adherence or lack thereof to public health-mandated intervention and mitigation measures. This study is based on using mathematical modeling approaches, backed by data analytics and computation, to theoretically assess the impact of human behavioral changes on the trajectory, burden, and control of the SARS-CoV-2 pandemic during the first two waves in New York City. A novel behavior-epidemiology model, which considers $n$ heterogeneous behavioral groups based on level of risk tolerance and distinguishes behavioral changes by social and disease-related motivations (such as peer-influence and fear of disease-related hospitalizations), is developed. In addition to rigorously analyzing the basic qualitative features of this model, a special case is considered where the total population is stratified into two groups: risk-averse (Group 1) and risk-tolerant (Group 2). The two-group behavior model has three equilibria in the absense of disease, and their stability is analyzed using standard linearization and the properties of Metzler-stable matrices.  Furthermore, the two-group model was calibrated and validated using daily hospitalization data for New York City during the first wave, and the calibrated model was used to predict the data for the second wave. The two-group model predicts the daily hospitalizations during the second wave almost perfectly, compared to the version without behavioral considerations, which fails to accurately predict the second wave. This suggests that epidemic models of the SARS-CoV-2 pandemic that do not explicitly account for heterogeneities in human behavior may fail to accurately predict the trajectory and burden of the pandemic in a population. Numerical simulations of the calibrated two-group behavior model showed that while the dynamics of the SARS-CoV-2 pandemic during the first wave was largely influenced by the behavior of the risk-tolerant (Group 2) individuals, the dynamics during the second wave was influenced by the behavior of individuals in both groups. It was also shown that disease-motivated behavioral changes (i.e., behavior changes due to the level of SARS-CoV-2 hospitalizations in the community) had greater influence in significantly reducing SARS-CoV-2 morbidity and mortality than behavior changes due to the level of peer or social influence or pressure. Finally, it is shown that the initial proportion of members in the community that are risk-averse (i.e., the proportion of individuals in Group 1 at the beginning of the pandemic) and the early and effective implementation of non-pharmaceutical interventions have major impacts in reducing the size and burden of the pandemic (particularly the total SARS-CoV-2 mortality in New York City during the second wave). 

\end{abstract}

{\it Keywords:} behavioral-epidemiology model; SARS-CoV-2; equilibria; influence dynamics.
\section{Introduction}
\noindent The novel pneumonia-like illness that emerged in Wuhan, China in December 2019, coined the 2019 coronavirus (COVID-19) and caused by SARS-CoV-2, spread rapidly within China and to other parts of the world, prompting the World Health Organization to declare a global pandemic of COVID-19 in March 2020. With over 775 million cases and over 7 million deaths globally \cite{WHOdashboard}, the toll of COVID-19 is long-lasting. It caused many healthcare workers to leave the field, leading to long-term staffing shortages \cite{USBLS, ASPE} and downturns in the global economy \cite{econ1, econ2, econ3}, education quality \cite{education1, education2}, and mental health across the world \cite{mh1,mh2}. Additionally, it resulted in dramatic increases in the level of political polarization, inequity and disparity in access to healthcare \cite{HC}, the spread of mis(dis)information \cite{info1, info2}, anti-vaccination movements \cite{antivax1, antivax2}, distrust of governments and public health agencies \cite{trust1, trust2, trust3}, and incidence of domestic violence \cite{dv1, dv2, dv3}. Four years into the pandemic (November 2024), nearly one thousand people die each week globally of SARS-CoV-2, despite the widespread availability of life-saving vaccines \cite{WHOdashboard}. \\
\indent The behavioral response to the SARS-CoV-2 pandemic in the US and globally was highly heterogeneous \cite{het1, het2}, largely due to individual-level differences (such as heterogeneities with respect to the quality of the pandemic-related information received\cite{gg_antecedents_uscinski}, economic status and amount of savings \cite{gg_sociopolitcal_stoler}, trust of government and health agencies \cite{gg_sociopolitcal_stoler}, and social pressures \cite{gg_sociocultural_yang}). This behavioral heterogeneity played a major role in shaping the trajectory and burden of the pandemic \cite{gg_adaptivecontact_arthur, gg_plateaus_berestycki, gg_simplebehavior_lejune}. Numerous modeling types,(including compartmental \cite{gg_diffusion_berestycki, gg_asymptomatic_espinoza, gg_heterogeneous_espinoza, gg_interplay_nguyen, gg_optimal_pisaneschi, gg_perioddoubling_sharbayta, gg_stochastic_ochab, gg_simplebehavior_lejune, ge_1918flu_he, ge_behavior_binod, gg_plateaus_weitz, gg_adaptivecontact_arthur, gg_saadroy, gg_flu_valle, gg_biases_weitz}, agent-based \cite{gg_abm_lapoirie, gg_sociocultural_yang, gg_abm_kitson, gg_abm_chen, gg_flu_valle}, network \cite{gg_networks_qiu, gg_abm_kitson, gg_abm_chen, ge_jewish_pearson, gg_aware_funk, gg_empathy_weitz}, etc.,) have been used to gain insight and understanding on the role of human behavior changes (or heterogeneities) on the spread and control of the SARS-CoV-2 pandemic. These models typically assume that individual disease-related behavior change (such as adherence or non-adherence to public health interventions, reducing contacts, etc.) is motivated by a combination of disease-related metrics (such as the level of disease prevalence and intervention fatigue in their local community or population) \cite{gg_interplay_nguyen, gg_perioddoubling_sharbayta, ge_behavior_binod, gg_saadroy}, peer pressure or influence \cite{gg_abm_lapoirie, gg_sociocultural_yang, gg_networks_qiu, ge_behavior_binod}, and other costs associated with adoption of disease-related precautionary  behaviors). For example, the compartmental model developed by Pisaneschi et al. \cite{gg_optimal_pisaneschi} accounts for average daily wages, assuming that a lower wage will force individuals to abandon disease-related precautionary behaviors. These behavioral heterogeneities and changes are typically modeled by incorporating an adaptive transmission rate that is dependent on disease prevalence \cite{gg_heterogeneous_espinoza, gg_simplebehavior_lejune, gg_biases_weitz} or by stratifying the members of the population into various mutually-exclusive compartments based on their disease-related behavioral choices or attitudes \cite{gg_asymptomatic_espinoza, gg_heterogeneous_espinoza, gg_sociocultural_yang, ge_behavior_binod, gg_saadroy} (i.e., a higher transmission rate is assumed for the individuals in the risk-taking compartment, in comparison to those in the compartment for risk-averse individuals). \\
\indent Prior modeling studies showed that behavioral heterogeneities and changes occur at a group level (i.e., the whole population becomes more or less cautious, in comparison to its behavioral choices before the onset of the disease or during an earlier phase of its progression) \cite{gg_simplebehavior_lejune, gg_heterogeneous_espinoza} and at an individual level (i.e., certain individuals will alter their past behavior or attitude with respect to the disease, despite the trends or choices made by the general population) \cite{ge_behavior_binod, gg_networks_qiu, gg_sociocultural_yang}, and that both types of behavior changes hold strong influence on the trajectory and burden of the pandemic\cite{gg_heterogeneous_bansal}. For example, Espinoza et al. \cite{gg_heterogeneous_espinoza} used a game-theoretic compartmental model (with a utility function that influences the contact rate within the population), which stratifies the susceptible, exposed, and asymptomatic portions of the population based on compliance or lack thereof to interventions, and showed that heterogeneous responses based on perceived risk (calculated using only the number of symptomatically-infectious individuals) can increase the final size of the SARS-CoV-2 pandemic. Furthermore, several other modeling studies confirmed that asymptomatic transmission was a major driver of the COVID-19 pandemic \cite{ge_behavior_binod, gg_asymptomatic_espinoza, lockdownpaper, asymp1, asymp2}, resulting in large disparity between true and perceived disease prevalence (which changed people's behavior or attitude towards the pandemic \cite{gg_asymptomatic_espinoza}). Specifically, Pant et al. \cite{ge_behavior_binod} showed a strong correlation between the proportion of newly-infected individuals who become asymptomatically infectious and decreases in positive behavior changes with respect to (adherence to) public health control and mitigation measures. Thus, these studies clearly show that the effect of human behavior changes during the SARS-CoV-2 pandemic can be incorporated into epidemiological models using measurable and publicly-available health metrics (such as the number of new confirmed/symptomatic cases, hospitalizations, and disease-induced mortality) and sociological metrics (e.g., intervention fatigue) \cite{ge_behavior_binod}.\vspace*{2mm} \ \\ 
\noindent The objective of the current study is to assess the population-level impact of behavioral changes, with respect to levels of hospitalization, on the transmission dynamics and control of the SARS-CoV-2 pandemic in New York City. To achieve this objective, a multi-group compartmental model is developed, which subdivides the total population into $n$ heterogeneous groups that differ based on their behavior and/or attitude towards the disease. Specifically, the model takes into account how adverse group members are to acquiring (if they are not infected or unaware of their infection) or transmitting (if they are infected and aware of their infection) the disease. The model to be developed, which consists of a deterministic system of nonlinear differential equations, will be parameterized using observed SARS-CoV-2 hospitalization data in New York City during the first wave of the pandemic. Furthermore, group-level behavioral change will be accounted for in the model by incorporating a time-varying parameter that accounts for the relative reduction in individual contact rates. This parameter will be formulated to have a nonlinear dependence on perceived risk of acquisition and transmission of infection, measured by the proportion of hospitalized individuals in the community (New York City). The paper is organized as follows. The model (with $n$ heterogeneous behavioral groups), tagged {\it the behavior model}, is formulated in Section \ref{formulation}. For the special case with $n=2$ behavioral groups, the stability of the disease-free equilibria is analyzed in Section \ref{sec:stab_analysis}. The model with $n=2$ behavioral groups is fitted and cross-validated with observed data for the first wave of the SARS-CoV-2 pandemic in New York City in Section \ref{sec:fitting}. To assess the impact of behavior on accurately capturing the correct trajectory and burden of the pandemic, the behavior-free equivalent of the model is also fitted and cross-validated for comparison purposes (specifically, the objective is to determine which of the two models (behavior vs. behavior-free) fits and cross-validates the observed data more accurately). Global parameter sensitivity analysis and numerical simulations of the behavior model are reported in Sections \ref{sec:sensitivity} and \ref{sec:numsim}, respectively. Discussion and concluding remarks are presented in Section \ref{discussion}.

\section{Formulation of behavior-epidemiology model}\label{formulation}
\noindent The focus is to develop a mechanistic model for assessing the population-level impact of hospitalization-induced and peer pressure-induced behavior changes on the transmission dynamics of the SARS-CoV-2 pandemic in New York City during the first wave (which runs from late February 2020 to late July 2020 \cite{CDC_NYCstats}. The model is formulated by subdividing the total population at time $t$, denoted by $N(t)$, into $n$ heterogeneous behavioral groups based on their perceived risk (of acquisition/transmission of infection) and their resulting behavior choices with respect to the risk. The first group (referred to as Group 1) represents individuals in the community who consistently adopt the most risk-averse behaviors against the acquisition or transmission of the disease by consistently and strictly adhering to public health intervention and mitigation measures. Similarly, Groups 2 through $n$ represent sub-groups within the community who adopt more risk-taking behaviors, with Group $n$ representing the individuals in the community who adopt the most risk tolerant behaviors \cite{gg_interplay_nguyen}. Furthermore, each of the $n$ heterogeneous behavioral groups is further subdivided into the mutually-exclusive compartments of susceptible ($S_i(t)$), exposed ($E_i(t))$, asymptomatically-infectious ($I_{a,i}(t)$), symptomatically-infectious ($I_{s,i}(t)$), hospitalized ($I_{h,i}(t)$) and recovered ($R_i(t)$) individuals (with $i=1,2,\cdots, n$), so that:
$$N_i(t)=S_i(t)+E_i(t)+I_{a,i}(t)+I_{s,i}(t)+I_{h,i}(t)+R_i(t).$$  
\noindent 
The equations for the rate of change of each of the aforementioned epidemiological compartments (for each of the $n$ behavioral sub-groups) are given by the following deterministic system of nonlinear differential equations (where a dot represents differentiation with respect to time $t$, and $i=1,2,\,\cdots,n$):

\begin{align}
    \begin{split}
\Dot{S_i}(t) &= \xi R_i(t)-\left(\theta_l\right)\left[\dfrac{c_i^A(t)S_i(t)}{N(t)}\right]\sum_{j=1}^nc_j^A(t) \left[\beta_aI_{a,j}(t)+\beta_iI_{s,j}(t)+\beta_hI_{h,j}(t)\right] \\ 
        &+\frac{S_{i-1}(t)}{N(t)}\sum_{j=i}^n c_{j(i-1)}^BN_j(t) + \frac{S_{i+1}(t)}{N(t)}\sum_{j=1}^i c_{j(i+1)}^BN_j(t)-\frac{S_i(t)}{N(t)}\sum_{\substack{j=1\\
                  j\neq i}}^n c_{ji}^BN_j(t) \\
\Dot{E_i(t)} &=\left(\theta_l\right)\left[\dfrac{c_i^A(t)S_i(t)}{N(t)}\right]\sum_{j=1}^nc_j^A(t)\left[\beta_aI_{a,j}(t)+\beta_iI_{s,j}(t)+\beta_hI_{h,j}(t)\right] -\sigma_eE_i(t) \\
        & + \dfrac{E_{i-1}(t)}{N(t)}\sum_{j=i}^n c_{j(i-1)}^BN_j(t) + \dfrac{E_{i+1}(t)}{N(t)}\sum_{j=1}^i c_{j(i+1)}^BN_j(t)-\dfrac{E_i(t)}{N(t)}\sum_{\substack{j=1\\
                  j\neq i}}^n c_{ji}^BN_j(t)  \\
\Dot{I}_{a,i}(t) &= (1-r)\sigma_eE_i(t)-\gamma_aI_{a,i}(t)  \\
        & + \dfrac{I_{a,i-1}(t)}{N(t)}\sum_{j=i}^n c_{j(i-1)}^BN_j(t) + \dfrac{I_{a,i+1}(t)}{N(t)}\sum_{j=1}^i c_{j(i+1)}^BN_j(t)-\dfrac{I_{a,i}(t)}{N(t)}\sum_{\substack{j=1\\
                  j\neq i}}^n c_{ji}^BN_j(t) \\
\Dot{I}_{s,i}(t) &= r\sigma_eE_i(t)-\sigma_iI_{s,i}(t)   \\
        & + \dfrac{I_{s,i-1}(t)}{N(t)}\sum_{j=i}^n c_{j(i-1)}^BN_j(t) + \dfrac{I_{s,i+1}(t)}{N(t)}\sum_{j=1}^i c_{j(i+1)}^BN_j(t)-\dfrac{I_{s,i}(t)}{N(t)}\sum_{\substack{j=1\\
                  j\neq i}}^n c_{ji}^BN_j(t)  \\
\Dot{I}_{h,i}(t) &= q\sigma_iI_{s,i}(t)-(\gamma_h + \delta_h)I_{h,i}(t)  \\
        & + \dfrac{I_{h,i-1}(t)}{N(t)}\sum_{j=i}^n c_{j(i-1)}^BN_j(t) + \dfrac{I_{h,i+1}(t)}{N(t)}\sum_{j=1}^i c_{j(i+1)}^BN_j(t)-\dfrac{I_{h,i}(t)}{N(t)}\sum_{\substack{j=1\\
                  j\neq i}}^n c_{ji}^BN_j(t)  \\
\Dot{R_i}(t) &= \gamma_aI_{a,i}(t)+(1-q)\sigma_iI_{s,i}(t)+\gamma_hI_{h,i}(t) -\xi R_i(t) \\
        & + \dfrac{R_{i-1}(t)}{N(t)}\sum_{j=i}^n c_{j(i-1)}^BN_j(t) + \dfrac{R_{i+1}(t)}{N(t)}\sum_{j=1}^i c_{j(i+1)}^BN_j(t)-\dfrac{R_i(t)}{N(t)}\sum_{\substack{j=1\\
                  j\neq i}}^n c_{ji}^BN_j(t)
    \end{split}\label{eq:fullmodel}
\end{align}
\noindent A flow diagram of the heterogeneous $n-$group behavior-epidemiology model \eqref{eq:fullmodel} is depicted in Figure \ref{fig:dynamics}, and the state variables and parameters of the model are described in Tables \ref{table:state_var} and \ref{table:parameters}, respectively.
\begin{table}[ht!] 
  \begin{center}
     \begin{tabular}{|p{2cm}|p{14cm}|}
     \hline
      \textbf{State variable} & \textbf{Description}  \\
          \hline \hline
      $S_i$ & Population of susceptible individuals in  behavioral group $i$ \\ \hline
      $E_i$ & Population of exposed individuals in behavioral group $i$  \\ \hline
      $I_{a,i}$ & Population of asymptomatically-infectious individuals in behavioral group $i$ \\ \hline
      $I_{s,i}$ & Population of symptomatically-infectious individuals in behavioral group $i$ \\ \hline
      $I_{h,i}$ & Population of hospitalized individuals in behavioral group $i$  \\ \hline
      $R_i$ & Population of recovered individuals in behavioral group $i$  \\ \hline
      \end{tabular}
    \caption{Description of the state variables of the heterogeneous $n-$group behavior model \eqref{eq:fullmodel}}
    \label{table:state_var}
\end{center}
\end{table}
\begin{figure}[H]
    \centering
    \includegraphics[width=145mm,scale=1]{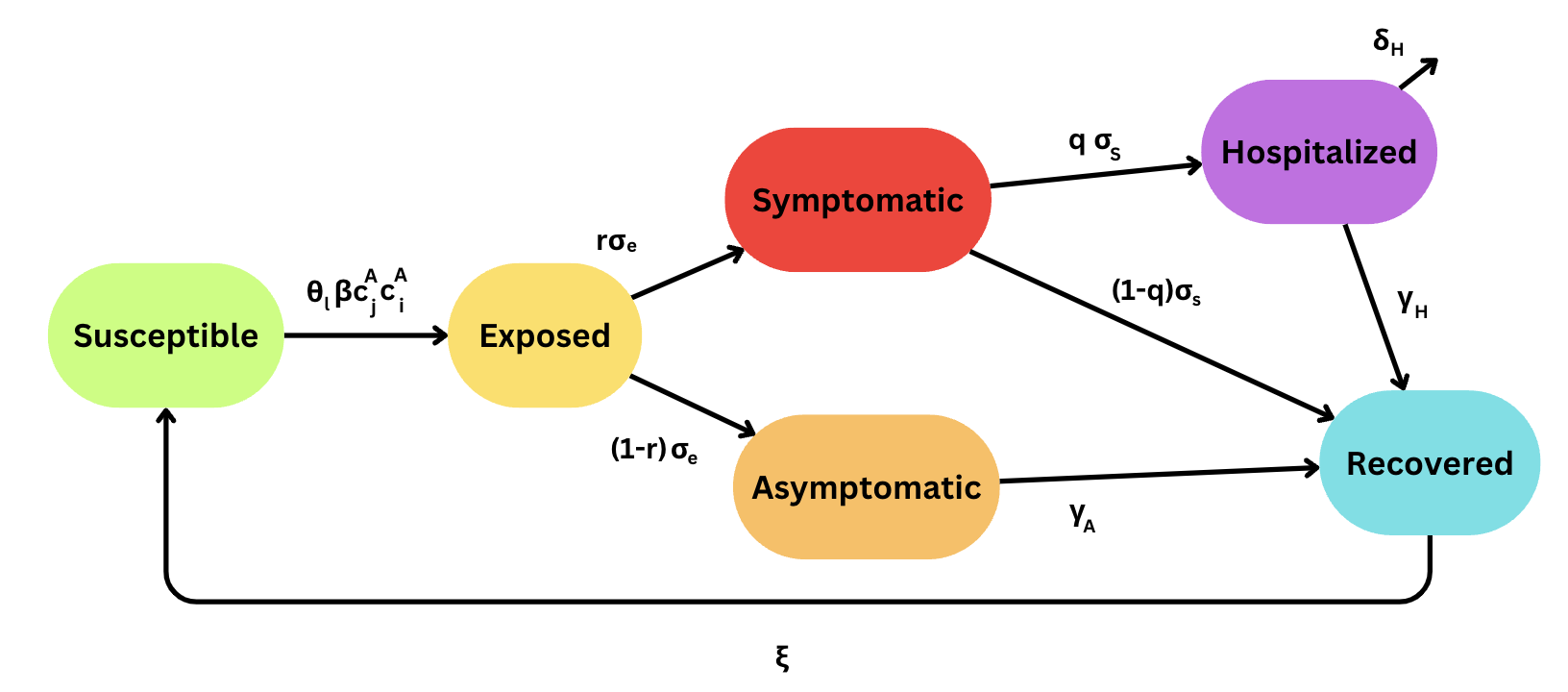}\\\includegraphics[width=145mm,scale=1]{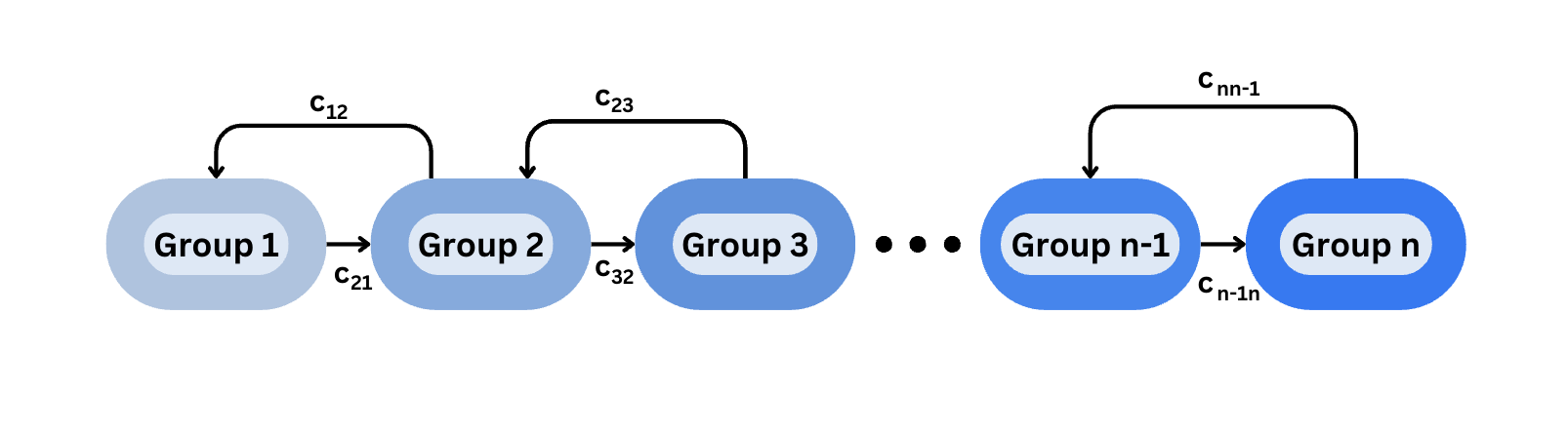}
    \caption{Flow diagram of the heterogeneous $n-$group behavior model \eqref{eq:fullmodel}, illustrating the disease dynamics (top panel) and the influence dynamics with $n$ behavioral groups (bottom panel).
    \label{fig:dynamics}}
\end{figure}
\noindent Specifically, in the model \eqref{eq:fullmodel}, susceptible individuals in behavioral group $i$ (i.e., $S_i$) acquire SARS-CoV-2 infection at a rate, $\lambda$ (known as the {\it force of infection}), given by (with $i=1,2,\,\cdots,n$):
$$ \lambda= \left(\theta_l\right)\left[\dfrac{c_i^A(t)S_i(t)}{N(t)}\right]\sum_{j=1}^nc_j^A(t)\left[\beta_aI_{a,j}(t)+\beta_iI_{s,j}(t)+\beta_hI_{h,j}(t)\right],$$ 
\noindent where $0< c^A_i(t)\le 1$ is a modification parameter accounting for the reduction in risky behavior by individuals in group $i$ due to the size of the proportion of individuals hospitalized due to SARS-CoV-2 in the community (the corresponding modification parameter for individuals in group $j$, with $j=1,2, \cdots,n$ and $j\ne i$, is $c^A_j(t)$.
Specifically, the modification parameter $c_i^A(t)$ is defined as:
\begin{align}
    \begin{split}
        c^A_i(t) &= e^{-a_iI_{h}(t)/N(t)},\;  i=1,2,\cdots, n,
    \end{split}\label{eq:cia}
\end{align}
\noindent where $I_h(t)=\dss\sum_{i=1}^nI_{h,i}(t)$ is the number of individuals in the community who are hospitalized with SARS-CoV-2 at time $t$, and the exponent $a_i>>1$ is a measure of hospitalization-induced positive behavioral change by individuals in group $i$. It is assumed that individuals in the more cautious behavioral groups (i.e., individuals in group $i$, where $i$ is relatively low (in comparison to $n$)) will positively change their behaviors more readily than those in the less cautious behavioral groups (i.e., those in group $i$ with $i$ close enough to $n$). Figure \ref{fig:cia} shows that a higher value of $a_i$ results in a larger reduction in contact rate by individuals in behavioral group $i$ (and, as a result, the force of infection) as a function of the proportion of individuals hospitalized in the community $I_h(t)/N(t)$. 
\begin{figure}[H]
    \centering
    \includegraphics[width=130mm,scale=1]{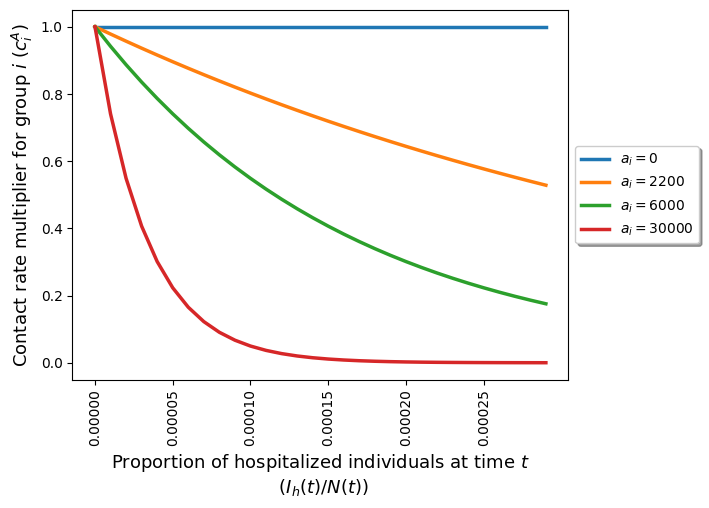}
    \caption{Profile of the effective contact rate modification parameter ($c_i^A(t)$), as a function of the proportion of the population that is hospitalized ($I_h(t)/N(t)$), for various values of $a_i$ (the modification parameter for the hospitalization-induced behavior change by individuals in group $i$). Blue, orange, green, and red curves represent, respectively, the case with $a_i=$ 0, 2,200, 6,000, and 30,000.}\label{fig:cia}
\end{figure} \vspace{-5mm}
\noindent The parameter $0\le \theta_l<1$ accounts for the effectiveness of government-mandated community lockdown (and other non-pharmaceutical intervention measures) implemented to prevent the acquisition and transmission of infection during the first wave of the pandemic in New York City \cite{ge_npi_gumel}. When there are no government-mandated lockdowns in place, $\theta_l$ is set equal to one, as this does not reduce the force of infection. As government-mandated lockdowns are put in place and become more effective (strict), the value of $\theta_l$ decreases. The parameters $\beta_a$, $\beta_i$ and $\beta_h$ represent, respectively, the maximum effective contact rate (i.e., maximum infection rate) for infectious individuals in the asymptomatic, symptomatic and hospitalized compartments.
\noindent Recovered individuals lose their infection-acquired (natural) immunity, and revert to the susceptible class, at a rate $\xi$. Exposed individuals develop clinical symptoms of SARS-CoV-2 at a rate $r \sigma_e$ (where $0<r<1$ is the proportion of exposed individuals who develop clinical symptoms of the disease at the end of the exposed period) or become asymptomatically-infectious at the rate $(1-r)\sigma_e$ (where $1-r$ is the remaining proportion of exposed individuals who become asymptomatic at the end of the exposed period). Symptomatically infectious individuals are hospitalized at a rate $q\sigma_i$ (where $0<q<1$ is the proportion of symptomatically infectious individuals who are hospitalized) or recover at a rate $(1-q)\sigma_i$. 
\noindent Asymptomatically infectious and hospitalized individuals recover at rates  $\gamma_a$ and $\gamma_h$, respectively. Hospitalized individuals suffer disease-induced mortality at a rate $\delta_h$ (it is assumed that only hospitalized infectious individuals suffer disease-related mortality, which is not a perfect assumption but is predominantly the case \cite{outofhospdeath}). Influence dynamics (where individuals in behavioral group $i$ exert some degree of influence over individuals in another behavioral group $j$) is modeled using the {\it influence-motivated behavioral change} parameter, $c^B_{ij}$. Specifically, the parameter $c^B_{ij}$ accounts for individual behavioral change in group $j$ with respect to public health interventions and risk taking behavior, meaning individuals in group $j$ who switch behavioral groups (increase or decrease the riskiness of their behavior) as they are influenced by individuals in behavioral group $i$. This parameter is multiplied by the total proportion of individuals in behavioral group $i$, ($N_i(t)/N(t)$) meaning the power a behavioral group $i$ has to change the behavior of others is proportional to its size. Exerted influence $c^B_{ij}$ results in a proportion of individuals in group $j$ changing their behavioral group and moving one group closer to group $i$, going to either group $j+1$ if $i>j$, or group $j-1$ if $i<j$. This influence dynamics process is illustrated in the bottom panel of Figure \ref{fig:dynamics}. 
\begin{table}[H] 
  \begin{center}
     \begin{tabular}{|c|p{10cm}|}
      \hline
      \textbf{Parameter ($i=1,2,\cdots,n$)} & \textbf{Description ($i=1,2,\cdots,n$)}  \\
          \hline \hline
      $\xi$ & Rate of loss of natural immunity \\ \hline
      $a_i$ & Modification parameter for hospitalization-induced behavioral change for individuals in group $i$ \\ \hline
      $0<(1-c^A_i(t))\le1$ &  A measure for the relative reduction in contact rate by individuals in behavioral group $i$ due to the level of disease-related hospitalization in the community \\ \hline
      $c^B_{ij}$ & Rate at which individuals in group $i$ influence those in group $j$ to transition to a group closer to $i$ (i.e., $j-1$ if $i<j$ or $j+1$ if $i>j$) \\ \hline
      $0<\theta_l\le 1$ & A measure for the efficacy of the government-mandated community lockdown (and other non-pharmaceutical intervention measures) \\ \hline
      $\beta_{a}$& Maximum effective contact rate for asymptomatic infectious individuals\\ \hline
       $\beta_{s}$& Maximum effective contact rate for symptomatic infectious individuals\\ \hline
    $\beta_{h}$& Maximum effective contact rate for hospitalized individuals\\ \hline
      $\sigma_{e}$& Rate of progression of exposed individuals to asymptomatic or symptomatic infectious compartment\\ \hline
      $r$& Proportion of exposed individuals that become symptomatic at the end of the exposed period\\ \hline
      $1-r$& Proportion of exposed individuals that become asymptomatic at the end of the exposed period\\ \hline
      $\sigma_i$ & Rate at which symptomatic individuals leave the symptomatic class\ \\ \hline 
      $q$& Proportion of symptomatic individuals that are hospitalized\\ \hline
      $1-q$& Proportion of symptomatic individuals that recover without hospitalization \\ \hline
      $\gamma_{a}$& Recovery rate for asymptomatic infectious individuals \\ \hline
      $\gamma_{h}$& Recovery rate for hospitalized individuals \\ \hline
      $\delta_{h}$& Disease-induced mortality rate for hospitalized individuals\\ \hline
\end{tabular}
    \caption{Description of the parameters of the $n-$group behavior model \eqref{eq:fullmodel}.}
    \label{table:parameters}
\end{center}
\end{table}
\noindent The main assumptions made in the formulation of the $n-$group behavior model \eqref{eq:fullmodel} include:
\begin{enumerate}
\item[(a)] Closed, large, and well-mixed population (i.e., homogeneous mixing between the various behavioral groups).
\item[(b)] Since the focus of this study is to model the dynamics of the SARS-CoV-2 pandemic during the first wave, no demographic (birth and natural death) processes are assumed. In other words, it is assumed that the timescale of the disease is shorter than the demographic timescale \cite{ITME_martcheva, gg_primer_gumel, hethcote_siam, km_1927}). The consequence of this assumption is that an epidemic, rather than an endemic, model will be used \cite{hethcote_siam}.
\item[(c)] Changes in risk-taking behaviors are induced either by (1) the level of disease burden in the community (measured by the proportion of individuals in the community hospitalized due to SARS-CoV-2 infection) or by (2) peer (influence) pressure (defined in terms of a linear function of behavioral group size (i.e., larger size groups will exert more pressure than groups of smaller size \cite{gg_sociocultural_yang}.
\item[(e)] Individuals currently in behavioral group $i$ can only transition (change their behavior) to the nearest behavioral groups ($i-1$ or $i+1$). This is to account for the assumption that individuals do not typically drastically change their risk-tolerance behavior over short time periods \cite{behaviorchange}.
\end{enumerate} 
\noindent Since the behavior model \eqref{eq:fullmodel} monitors the temporal dynamics of human populations, all its parameters and state variables are non-negative for all time $t$. Furthermore, the model is an extension of numerous other compartmental models for the SARS-CoV-2 pandemic that incorporate human behavior changes, such as the models in \cite{gg_simplebehavior_lejune, gg_heterogeneous_espinoza, ge_behavior_binod}, by, {\it inter alia}:
\begin{enumerate}
\item[(i)] Incorporating greater heterogeneity in behavior by having $n$ behavioral compartments ($S_i,E_i,A_i,\cdots R_i$ for $i=1,2,\,\cdots,n$). 
This extends the studies by Lejune et al. \cite{gg_simplebehavior_lejune} (which assumes homogeneous behavior within the population), Espinoza et al. \cite{gg_heterogeneous_espinoza} and Pant et al. \cite{ge_behavior_binod} (which considered a maximum of two behavioral groups for all disease compartments).
\item[(ii)] Allowing for back-and-forth transition between the $n$ heterogeneous behavioral sub-groups ($c_{ij}$, where $j,i=1,2,\cdots,n$, and $j\neq i$).  This extends the Espinoza et al. study  \cite{gg_heterogeneous_espinoza} (where no such transitions are allowed between the two behavioral groups considered therein). 
\item[(iii)] Incorporating the dynamics of hospitalized individuals (i.e., adding equations for the dynamics of the compartments $H_i$, for $i=1,2,\,\cdots,\,n$). The dynamics of hospitalized individuals are not accounted for in the models presented in \cite{ge_behavior_binod,gg_simplebehavior_lejune,gg_heterogeneous_espinoza}.
\item[(iv)] Using a time-dependent transmission parameter ($c_i^A(t)$, with $i=1,\cdots,n$) to account for group-level change of behavior in response to the level of hospitalization in the community. This extends the studies in \cite{ge_behavior_binod,gg_heterogeneous_espinoza} (which used fixed parameters for behavior change in response to the disease).
\item[(v)] Assuming that recovery from infection does not induce permanent immunity against future infection (i.e., $\xi\neq 0$). This extends the studies in \cite{ge_behavior_binod,gg_heterogeneous_espinoza} (which assume recovery from natural infection induces permanent immunity against future infection).
\end{enumerate}
The basic qualitative properties of the behavior model \eqref{eq:fullmodel} will now be explored below.
\subsection{Basic qualitative properties of the multigroup model}
\noindent Consider the following biologically-feasible region for the model \eqref{eq:fullmodel}:
\[\Omega=\biggl\{\left(S_1,\cdots,S_n,E_1,\cdots,E_n,\cdots,R_n\right)\in {\mathbb R}_+^{6n}:0\leq \sum_{i=1}^n(S_i+E_i+I_{a,i}+I_{s,i}+I_{h,i}+R_i)\leq N(0)\biggr\},\]
where $N(0)=\dss\sum_{i=1}^n(S_i(0)+E_i(0)+I_{a,i}(0)+I_{s,i}(0)+I_{h,i}(0)+R_i(0))$ is the total initial size of the population. We claim the following result:
\begin{theorem}\label{invariance}
    Suppose that the initial values $S_i(0),E_i(0),I_{a,i}(0),I_{i,i}(0),I_{h,i}(0),R_i(0)$ for $i=1,\cdots, n$ of the model \eqref{eq:fullmodel} are non-negative, with $S_i(0)>0$ for at least some $i$. Then, the solutions 
    \[S_i(t),E_i(t),I_{a,i}(t),I_{i,i}(t),I_{h,i}(t),R_i(t) \text{ for } i=1,\cdots, n \]
    \noindent of the model are bounded for all time $t\ge0$. Furthermore, the region $\Omega$ is positively-invariant with respect to the flow generated by the behavior model \eqref{eq:fullmodel}.
\end{theorem}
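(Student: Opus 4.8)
The plan is to establish, in order: (i) existence and uniqueness of a local solution; (ii) non-negativity of every state variable; (iii) strict positivity of the total population $N(t)$, which simultaneously keeps the vector field smooth and rules out finite-time blow-up, giving a global solution; and (iv) the a priori estimate $N(t)\le N(0)$, from which positive invariance of $\Omega$ is immediate. For the first step I would observe that on the open set $\{N>0\}\subset\RR^{6n}_+$ the right-hand side of \eqref{eq:fullmodel} is continuously differentiable: the only places where the state enters non-polynomially are the denominators $N(t)$ in the incidence and migration terms and the exponentials $c^A_i(t)=e^{-a_iI_h(t)/N(t)}$, all of which are smooth (and the $c^A_i$ bounded in $(0,1]$) wherever $N>0$. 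Since the hypothesis $S_i(0)>0$ for some $i$ forces $N(0)>0$, the Picard--Lindel\"of theorem supplies a unique maximal solution through the given initial datum.

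For non-negativity I would invoke the standard sub-tangentiality criterion: $\RR^{6n}_+$ is positively invariant provided that on each bounding hyperplane $\{x_k=0\}$ the $k$-th component of the vector field is non-negative whenever the remaining components are non-negative. This is checked compartment by compartment, and the structure is the same in all $6n$ equations. For example, when $S_i=0$ the incidence term in $\dot S_i$ vanishes (it carries a factor $S_i$), the out-migration term $-\frac{S_i}{N}\sum_{j\ne i}c^B_{ji}N_j$ vanishes, and the surviving terms $\xi R_i$, $\frac{S_{i-1}}{N}\sum_{j\ge i}c^B_{j(i-1)}N_j$ and $\frac{S_{i+1}}{N}\sum_{j\le i}c^B_{j(i+1)}N_j$ are all non-negative (with the natural convention that any term referring to a nonexistent group $0$ or $n+1$ is simply absent). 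The identical pattern — one out-flow term proportional to the variable itself and all in-flow terms non-negative — holds for $E_i,I_{a,i},I_{s,i},I_{h,i},R_i$, so the solution remains in $\RR^{6n}_+$.

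Next I would add all $6n$ equations. The migration terms cancel within each disease compartment separately: for a fixed compartment, the inflows from groups $i-1$ and $i+1$, summed over $i$, exactly balance the outflows from group $i$, because every influence event relocates an individual between adjacent behavioral groups without altering disease status. The epidemiological transfer terms ($\lambda$, $\sigma_eE_i$, $(1-r)\sigma_eE_i$ and $r\sigma_eE_i$, $\gamma_aI_{a,i}$, $\sigma_iI_{s,i}$, $\gamma_hI_{h,i}$, $\xi R_i$) cancel pairwise, leaving $\dot N(t)=-\delta_h\sum_{i=1}^nI_{h,i}(t)=-\delta_hI_h(t)$. Together with non-negativity this gives $-\delta_hN(t)\le\dot N(t)\le 0$, hence $N(0)e^{-\delta_ht}\le N(t)\le N(0)$ on the maximal interval of existence. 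The lower bound keeps the trajectory inside $\{N>0\}$ and, combined with $0\le x_k(t)\le N(t)\le N(0)$ for every coordinate, precludes blow-up, so the maximal solution is in fact global; the upper bound $N(t)\le N(0)$ is precisely the inequality defining $\Omega$. Therefore all solutions are bounded and $\Omega$ is positively invariant, which is the assertion.

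The one step that I expect to require genuine care is the vanishing of the summed migration terms: the non-standard summation ranges ($\sum_{j=i}^n$ versus $\sum_{j=1}^i$) and the implicit boundary behavior at $i=1$ and $i=n$ must be bookkept correctly to confirm that nothing leaks out of the behavioral-group structure when one sums over $i$. Everything else — local well-posedness, the inward-pointing check on the coordinate hyperplanes, and the Gronwall-type bounds on $N$ — is routine once that cancellation is in hand.
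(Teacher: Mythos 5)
Your proposal is correct, and it is strictly more complete than the argument the paper gives. The paper's proof consists essentially of your third step alone: sum the $6n$ equations to obtain $\dot N=-\delta_h\sum_iI_{h,i}\le 0$, deduce $N(t)\le N(0)$, and conclude boundedness and invariance of $\Omega$. The non-negativity of the state variables is not proved there — it is simply asserted on the grounds that the model "monitors human populations" — and the claim that $N(t)$ stays positive is justified only by the unproved assertion that $S_i(t)>0$ for some $i$ and all $t$. Your sub-tangentiality check on the coordinate hyperplanes supplies exactly the missing ingredient (without non-negativity of the $I_{h,i}$ one cannot even conclude $\dot N\le 0$ from the summed equation, so the paper's argument is implicitly circular at that point), and your lower bound $N(t)\ge N(0)e^{-\delta_h t}$ both keeps the trajectory away from the singularity of the vector field at $N=0$ and rules out finite-time blow-up, which the paper never addresses. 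Your identification of the cancellation of the reindexed migration sums ($\sum_{j=i}^n c^B_{j(i-1)}N_j$ and $\sum_{j=1}^i c^B_{j(i+1)}N_j$ versus $\sum_{j\ne i}c^B_{ji}N_j$, with the boundary convention at groups $1$ and $n$) as the one delicate bookkeeping step is also accurate; that cancellation is used silently in the paper. In short: same core mechanism ($\dot N\le 0$), but your version repairs the logical gaps in the published proof rather than merely reproducing it.
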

\begin{proof}    
\indent To prove the boundedness of the model, it is convenient to obtain the equation for the rate of change of the total population (obtained by adding all the equations of the model), given by: 
    \[\frac{dN}{dt}=-\delta_h\sum_{i=1}^nI_{h,i}(t).\]
    Since all the parameters and state variables of the model are non-negative (consequently, $\delta_h\ge0$ and  $I_{h,i}(t)\ge0$ for all $t$, with $i=1,\cdots,n$),
    it follows from the above equation that: 
    \begin{align}\label{eq:IVP}
        \frac{dN}{dt}\leq 0, \,\,{\rm for\,\,all\,\,} t\ge0.
    \end{align}
    To determine an upper bound for $N(t)$, the upper solution of the differential inequality \eqref{eq:IVP}, given by the solution of the initial-value problem, $\frac{dN}{dt}= 0$ subject to $N(0)=N_0$, is obtained.  Applying the initial condition into the solution $N(t)=c$ (where $c$ is a constant of integration) of $dN/dt=0$ gives $c=N_0$.  Thus, $N(t)$ is bounded above by $N_0$. 
    Since $S_i(t)>0$ for some $i$ and all $t\ge0$, the total population, $N(t)$, is bounded below by zero. Hence, $0<N(t)\leq N_0$, meaning $N(t)$ is bounded. Since $N(t)$ is bounded, it follows that all the state variables of the model are bounded. Thus, all solutions of the model \eqref{eq:fullmodel} are  bounded in $\Omega$ for all time $t\ge 0$. Since all solutions of the model are bounded in $\Omega$ (i.e., all initial solutions in $\Omega$ remain in $\Omega$ for all time $t$), the region $\Omega$ is positively-invariant with respect to the flow generated by the model \eqref{eq:fullmodel}.
\end{proof}
\noindent The implication of Theorem \ref{invariance} is that the behavior model \eqref{eq:fullmodel} is well-posed mathematically and epidemiologically in the bounded and positively-invariant region $\Omega$ \cite{hethcote_siam}. Hence, it is sufficient to study its qualitative dynamics in the region $\Omega$.
\section{Analysis of a special case with two behavioral groups}
\noindent For practical application and illustrative purposes, a special case of the $n$-group behavior model with two behavioral groups (i.e., Equation \eqref{eq:fullmodel} with $n=2$) will be considered and rigorously analyzed. The equations for this special case, tagged the {\it 2-group behavior model},  are given by Equation \eqref{eq:n2model} of Appendix B.
\subsection{Existence and stability of disease-free equilibria of the 2-group model}\label{sec:stab_analysis}
\subsubsection{Existence}
\noindent 
The 2-group behavior model \eqref{eq:n2model} has the following disease-free equilibria (which always exist):
\begin{enumerate}
    \item[(i)]{\bf non-trivial disease-free equilibria (NTDFE):}
\begin{equation}\label{eq:NTDFE} (S_1^*, E_1^*, I_1^*, A_1^*, H_1^*, R_1^*, S_2^*, E_2^*, I_2^*, A_2^*, H_2^*, R_2^*)
= (kp,(1-k)p,0,0,0,0,0,0,0,0,0,0),
\end{equation}
where $0\leq k\leq1$ and $p$ is the total population size at equilibrium. These equilibria are classified as follows:
    \begin{enumerate}
        \item[\bf(a)]\label{DFE1} ($k=1$): the resulting disease-free equilibrium, termed the \textit{Group 1-only} disease-free equilibrium, is denoted by \textit{G1DFE}.
        \item[\bf(b)]\label{DFE2} ($k=0$): the resulting disease-free equilibrium, called the \textit{Group 2-only} disease-free equilibrium, is denoted by \textit{G2DFE}.
        \item[\bf(c)]\label{DFE3} ($0<k<1$): the resulting disease-free equilibrium, termed the \textit{coexistence} disease-free equilibrium, is denoted by \textit{G3DFE}.
    \end{enumerate}
\item[(ii)] {\bf trivial disease-free equilibrium (TDFE):}
\begin{equation}\label{TDFE} (S_1^*, E_1^*, I_1^*, A_1^*, H_1^*, R_1^*, S_2^*, E_2^*, I_2^*, A_2^*, H_2^*, R_2^*)
= (0,0,0,0,0,0,0,0,0,0,0,0).  
\end{equation}
\noindent Since this {\it extinction} equilibrium is not epidemiologically realistic (i.e., no humans at steady-state), it will not be analyzed for its stability. 
\end{enumerate}
The stability of the three non-trivial disease-free equilibria of the 2-group model will now be analyzed.
\subsubsection{Stability of the non-trivial disease-free equilibria of the 2-group model}
\noindent The stability of the three non-trivial DFE of the 2-group behavior model (G1DFE, G2DFE, and G3DFE) will be analyzed using standard linearization. Owing to the large size of the 2-group behavior model (making the computation of the eigenvalues of its Jacobian evaluated at the respective DFE less tractable mathematically), the linearization approach will be combined with some results related to the properties of Metzler matrices \cite{metzler1,metzler2}. In particular, the notations in \cite{godsend} will be used. It is convenient to recall the following definitions (from \cite{godsend}), 
\begin{definition}\label{def:metzlermats}
    A matrix $M\in \RR^{n\times n}$ is called a \textit{Metzler matrix} if all of its off-diagonal entries are nonnegative. 
\end{definition}
\begin{definition}
    A matrix $M\in \RR^{n\times n}$ is called \textit{Hurwitz-Stable} if the real part of all eigenvalues of $M$ are strictly negative.
\end{definition}
\begin{definition}\label{def:metzlerstab}
    A matrix $M\in \RR^{n\times n}$ is called \textit{Metzler-stable} if it is a Metzler matrix with Hurwitz-Stability (i.e. the real part of all eigenvalues of $M$ are strictly negative).
\end{definition}
\noindent Define the following quantity:
\begin{equation}\label{eq:RC}
    \RR_c=\theta_l\biggl(\frac{r\beta_i}{\sigma_i} + \dss\frac{(1-r)\beta_a}{\gamma_a} + \dss\frac{rq\beta_h}{\gamma_h+\delta_h}\biggr)
\end{equation}
\noindent Lastly, it is convenient to define the additional parameter 
\[\Gamma=c^B_{12}/c^B_{21},\]
which will be called the \textit{relative influence ratio} of behavioral Group 1 to behavioral Group 2. We claim the following result:
\begin{theorem}\label{thm:LASG1DFE}
The Group 1-only disease-free equilibrium (G1DFE) of the 2-group behavior model \eqref{eq:n2model} is stable whenever $\RR_c<1$ \textit{and} $\Gamma>1$. The G1DFE is unstable whenever $\RR_c>1$ or $\Gamma\leq 1$.
\end{theorem}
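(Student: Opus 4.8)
The plan is to prove Theorem~\ref{thm:LASG1DFE} by standard linearization of the 2-group behavior model \eqref{eq:n2model} about the G1DFE (the member of the family \eqref{eq:NTDFE} with $k=1$, in which every individual is a Group-1 susceptible and Group~2 is empty), reading off the spectrum of the Jacobian from its block structure and invoking the Metzler-stability notions of Definitions~\ref{def:metzlermats}--\ref{def:metzlerstab} together with the block-triangular stability results of \cite{godsend}. The structural fact that makes this work is that at the G1DFE the force of infection vanishes (so $I_h=0$, hence each $c_i^A=1$ there) and every Group-2 compartment is zero; consequently all the bilinear transmission terms (those proportional to $S_i I_{\cdot,j}$) and every influence term carrying the factor $N_2$ have vanishing partial derivatives at the G1DFE. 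After reordering the twelve state variables into the four groups $D=(E_2,I_{s,2},I_{a,2},I_{h,2})$, $B=(E_1,I_{s,1},I_{a,1},I_{h,1})$, $C=(S_2,R_2)$ and $A=(S_1,R_1)$, the Jacobian $J^\ast$ becomes block lower-triangular: $D$ is self-contained, $B$ is forced only by $D$ (via the $c^B_{12}$ transfer of Group-2 infecteds into Group~1), $C$ is forced only by $D$, and $A$ is forced by $B,C,D$ but by nothing else; in particular none of $D,B,C$ depends on $A$. Hence the spectrum of $J^\ast$ is the union of the spectra of the four diagonal blocks, and it suffices to treat them separately.

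Two of the diagonal blocks are stable with no conditions. Linearized, block~$B$'s companion block~$D$ is lower-triangular with diagonal entries $-(\sigma_e+c^B_{12})$, $-(\sigma_i+c^B_{12})$, $-(\gamma_a+c^B_{12})$, $-(\gamma_h+\delta_h+c^B_{12})$, all strictly negative, so it is Metzler-stable for all parameter values. Block~$A$ reduces to the $2\times 2$ matrix with first row $(0,\xi)$ and second row $(0,-\xi)$, whose eigenvalues are $0$ and $-\xi$; the zero eigenvalue is simple, and its eigendirection is tangent to the one-parameter family \eqref{eq:NTDFE} of disease-free equilibria (equivalently to the direction of varying total population size). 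By the usual center-manifold argument (the center manifold being that very family of equilibria, on which the dynamics is trivial) this eigenvalue does not affect Lyapunov stability, though it precludes asymptotic stability in the full state space --- asymptotic stability does hold on the invariant population-normalized reduced system, on which the neutral direction is absent --- and I would phrase the conclusion in that (Lyapunov) sense, matching the wording of the theorem.

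The condition $\RR_c<1$ comes from block~$B$. Linearized, this block has exactly the form $F-V$, where $F\ge 0$ carries the new-infection rates $\theta_l\beta_a,\theta_l\beta_i,\theta_l\beta_h$ in the $\dot E_1$ row and $V$ is the lower-triangular, nonsingular transition matrix of the Group-1 infected compartments; $F-V$ is a Metzler matrix, so by the standard Metzler-matrix criterion it is Metzler-stable if and only if $\rho(FV^{-1})<1$. Because $F$ has rank one, $\rho(FV^{-1})$ equals its only nonzero diagonal entry $(FV^{-1})_{11}$, and back-substitution in $Vy=e_1$ gives
\[(FV^{-1})_{11}=\theta_l\Bigl(\frac{r\beta_i}{\sigma_i}+\frac{(1-r)\beta_a}{\gamma_a}+\frac{rq\beta_h}{\gamma_h+\delta_h}\Bigr)=\RR_c\]
as defined in \eqref{eq:RC}. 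Thus block~$B$ is Metzler-stable precisely when $\RR_c<1$, and it has an eigenvalue with positive real part when $\RR_c>1$.

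Finally, the condition $\Gamma>1$ comes from block~$C$. Linearized, the $(S_2,R_2)$ subsystem is upper-triangular with diagonal entries $c^B_{21}-c^B_{12}$ and $-(\xi+c^B_{12})$; the second is always negative, while the first is negative exactly when $c^B_{12}>c^B_{21}$, i.e. when $\Gamma=c^B_{12}/c^B_{21}>1$. Putting the blocks together: every eigenvalue of $J^\ast$ other than the simple structural zero has negative real part if and only if $\RR_c<1$ and $\Gamma>1$, which gives stability of the G1DFE; if $\RR_c>1$ then block~$B$ supplies an eigenvalue with positive real part, and if $\Gamma<1$ then block~$C$ supplies the positive eigenvalue $c^B_{21}-c^B_{12}$, so in either case the G1DFE is unstable --- and at the borderline $\Gamma=1$ this eigenvalue is $0$, so the G1DFE ceases to be asymptotically stable and merges with the coexistence branch, which is the sense intended in the statement for $\Gamma=1$. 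I expect the main obstacle to be Step~1, namely the bookkeeping needed to verify that every transmission and cross-group influence term really does linearize to zero at the G1DFE so that the block-triangular decoupling holds exactly --- this is what the ``large size of the model'' alludes to --- together with correctly identifying and disposing of the structural zero eigenvalue rather than mistaking it for a genuine loss (or gain) of stability.
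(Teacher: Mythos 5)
Your proof is correct and reaches the same spectral decomposition of the Jacobian, but it handles the hard part --- the $8\times 8$ infected block --- by a genuinely different route. The paper keeps the Group-1 and Group-2 infected compartments interleaved in a single matrix $A_{22}$ and grinds through three nested Schur-complement reductions (Proposition 3.1 of \cite{godsend}, applied to $P,\,S-RP^{-1}Q$, and $Z-YW^{-1}X$ in turn), accumulating the condition $\RR_c<1$ as a cascade of intermediate inequalities (Conditions 2--4). You instead observe that the infected block itself is block-triangular: the Group-2 infecteds form a self-contained lower-triangular subsystem with unconditionally negative diagonal, while the Group-1 infecteds form a $4\times4$ block of exact next-generation form $F-V$ with rank-one $F$, so the Metzler-stability criterion $\rho(FV^{-1})<1$ collapses to the single scalar $(FV^{-1})_{11}=\RR_c$. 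This is cleaner and makes the origin of $\RR_c$ transparent, at the cost of invoking the standard $F$--$V$/next-generation lemma rather than the single Metzler proposition the paper relies on; the two decompositions yield identical spectra ($\sigma(A_{11})=\{0,-\xi\}\cup\{c^B_{21}-c^B_{12},-\xi-c^B_{12}\}$ coming from your blocks $A$ and $C$, and $\sigma(A_{22})=\sigma(B)\cup\sigma(D)$). You are also more careful than the paper on two points it glosses over: you justify Lyapunov stability in the presence of the structural zero eigenvalue by identifying its eigendirection with the tangent to the one-parameter family of disease-free equilibria (a center manifold consisting of equilibria), whereas the paper simply declares the G1DFE ``marginally stable''; and you honestly flag that the borderline case $\Gamma=1$, which the theorem lumps into ``unstable,'' produces a second zero eigenvalue for which linearization is inconclusive --- a gap present in the paper's own argument, not one you introduced.
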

\noindent The proof of this theorem (\ref{thm:LASG1DFE}) is given in Appendix B.
\noindent Similarly, the following results can be established for the stability of the G2DFE and the G3DFE (the proofs are not given here to save space):
\begin{theorem}\label{thm:LASG2DFE}
The Group 2-only disease-free equilibrium (G2DFE) of the 2-group behavior model \eqref{eq:n2model} is stable whenever $\RR_c<1$ \textit{and} $\Gamma<1$, and unstable whenever $R_c>1$ or $\Gamma\geq 1$.
\end{theorem}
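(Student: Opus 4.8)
The plan is to obtain Theorem~\ref{thm:LASG2DFE} from Theorem~\ref{thm:LASG1DFE} by invoking the relabeling symmetry of the two-group model. For $n=2$ the two behavioral groups are each other's only neighbor, so interchanging the labels $1\leftrightarrow 2$ is an automorphism of the system \eqref{eq:n2model}: it swaps the hospitalization-induced behavior-change exponents $a_1\leftrightarrow a_2$ and the peer-influence rates $c^B_{12}\leftrightarrow c^B_{21}$ (hence sends $\Gamma\mapsto 1/\Gamma$), fixes every disease-transmission, progression, recovery and mortality parameter---and therefore the quantity $\RR_c$ in \eqref{eq:RC}---and carries the \textit{Group 1-only} equilibrium onto the \textit{Group 2-only} equilibrium and conversely. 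Applying Theorem~\ref{thm:LASG1DFE} to the relabeled system then shows that the G2DFE of \eqref{eq:n2model} is stable precisely when $\RR_c<1$ and $1/\Gamma>1$, i.e. when $\RR_c<1$ and $\Gamma<1$, and unstable when $\RR_c>1$ or $1/\Gamma\le 1$, i.e. when $\RR_c>1$ or $\Gamma\ge 1$. The only thing to verify is that the equations of Appendix~B are literally invariant under the swap of the two group indices, which is immediate by inspection.

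Alternatively, for a fully self-contained argument I would repeat, mutatis mutandis, the linearization used to prove Theorem~\ref{thm:LASG1DFE}. First I would linearize \eqref{eq:n2model} about the G2DFE (at which $S_2^\ast=p$ and all other state variables vanish), noting that $c^A_i=1$ there because $I_h=0$, so the exponents $a_i$ do not enter the Jacobian. Since every infected compartment ($E_i,A_i,I_i,H_i$, $i=1,2$) vanishes at this equilibrium and every infected equation depends on the uninfected variables $S_i,R_i$ only through products that contain an infected variable, the Jacobian is block lower-triangular, with an $8\times 8$ infected block $J_{\mathrm{inf}}$ and a $4\times 4$ uninfected block $J_{\mathrm{unif}}$, so its spectrum is the union of the two. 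The block $J_{\mathrm{inf}}$ is a Metzler matrix (its off-diagonal entries are the nonnegative transmission, progression and influence-transfer rates), and I would write $J_{\mathrm{inf}}=F-V$ with $F\ge 0$ collecting the new-infection terms---which, as $S_1^\ast=0$, feed only the $E_2$ compartment---and $V$ the associated non-singular M-matrix. The Metzler-stability criterion of \cite{godsend} then gives that $J_{\mathrm{inf}}$ is Metzler-stable (equivalently Hurwitz-stable, since it is Metzler) iff $\rho(FV^{-1})<1$; here this spectral radius is easy to evaluate, because $F$ has a single nonzero row and, at the G2DFE, the influence transfers flow only out of the (empty) Group~1 infected compartments, so the column of $V^{-1}$ associated with the $E_2$ compartment is supported on the Group~2 compartments and yields exactly $\rho(FV^{-1})=\RR_c$. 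Hence $J_{\mathrm{inf}}$ is Hurwitz-stable iff $\RR_c<1$, and, by Perron--Frobenius for Metzler matrices, has an eigenvalue with positive real part when $\RR_c>1$.

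It then remains to analyze $J_{\mathrm{unif}}$, the block governing $(S_1,R_1,S_2,R_2)$. Using $N_1^\ast=0$ and $N_2^\ast=N^\ast=p$, a direct computation shows that in a suitable ordering this block is triangular, with diagonal entries $-\xi-c^B_{21}$, $-\xi$, $c^B_{21}(\Gamma-1)$ and $0$; the zero eigenvalue is the ever-present neutral direction corresponding to the one-parameter family of Group-2-only equilibria indexed by the total population size (a consequence of the absence of demography), and every other diagonal entry is negative precisely when $c^B_{21}(\Gamma-1)<0$, i.e. when $\Gamma<1$. Combining the two blocks, the G2DFE is locally asymptotically stable (modulo the trivial population-size direction) when $\RR_c<1$ and $\Gamma<1$, and has an eigenvalue with positive real part---hence is unstable---when $\RR_c>1$ or $\Gamma>1$; the borderline $\Gamma=1$ is grouped with the unstable case because the coexistence equilibria then degenerate into a line of equilibria through the G2DFE, which precludes asymptotic stability. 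The step I expect to be the main obstacle is the second one: cleanly assembling $F$ and $V$ for the eight-dimensional, influence-coupled infected subsystem and confirming that $\rho(FV^{-1})$ collapses exactly to $\RR_c$ despite the between-group transfer terms in $V$---together with the minor nuisance that the linearization is non-hyperbolic both at $\Gamma=1$ and along the population-size direction, so a short center-manifold or invariant-subspace remark is needed to make the stability conclusion fully rigorous.
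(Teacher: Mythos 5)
Your proposal is correct, and both of its routes differ from what the paper actually does (the paper omits the proof of Theorem~\ref{thm:LASG2DFE} entirely, remarking only that it is ``similar,'' and its template proof of Theorem~\ref{thm:LASG1DFE} in Appendix~B proceeds by iterated block/Schur-complement reductions of the Metzler matrix $A_{22}$ via Proposition~3.1 of \cite{godsend}). Your first argument --- observing that the index swap $1\leftrightarrow 2$, together with $a_1\leftrightarrow a_2$ and $c^B_{12}\leftrightarrow c^B_{21}$, is an automorphism of \eqref{eq:n2model} that fixes $\RR_c$, sends $\Gamma\mapsto 1/\Gamma$, and exchanges the G1DFE with the G2DFE --- is the most economical proof available: it derives Theorem~\ref{thm:LASG2DFE} from Theorem~\ref{thm:LASG1DFE} in one line and makes transparent why the stability conditions are mirror images, at the cost of only a routine inspection of Appendix~B. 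Your second, self-contained argument reaches the same Jacobian block structure as the paper but replaces the paper's nested $S-RP^{-1}Q$ and $Z-YW^{-1}X$ computations with the next-generation factorization $J_{\mathrm{inf}}=F-V$ and the equivalence of Hurwitz stability with $\rho(FV^{-1})<1$; your key simplification --- that at the G2DFE new infections feed only $E_2$ while the linearized influence transfers flow only from the empty Group~1 compartments into Group~2, so the spectral radius collapses to the one-group value $\RR_c$ --- is sound, and your diagonal entries $\{0,\,c^B_{21}(\Gamma-1),\,-\xi,\,-c^B_{21}-\xi\}$ for the uninfected block are exactly the mirror of the paper's $\sigma(A_{11})$. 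Two small caveats, both of which the paper shares rather than resolves: the ever-present zero eigenvalue means ``stable'' can only mean marginal stability (the paper's own Appendix~B concedes this for the G1DFE), and the claimed \emph{instability} at the borderline $\Gamma=1$ does not follow from linearization alone --- your center-manifold remark gestures at the right fix, but strictly speaking it only rules out asymptotic stability, which is as much as the paper establishes for the corresponding clause of Theorem~\ref{thm:LASG1DFE}.
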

\begin{theorem}\label{thm:LASG3DFE}
The coexistence disease-free equilibrium (G3DFE) of the 2-group behavior model \eqref{eq:n2model} is stable whenever $\RR_c<1$ \textit{and} $\Gamma=1$, and is unstable whenever $R_c>1$ or $\Gamma\neq 1$.
\end{theorem}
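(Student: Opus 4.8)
The plan is to follow the same route used for Theorem~\ref{thm:LASG1DFE} in Appendix~B: linearize the 2-group model \eqref{eq:n2model} at the coexistence point $(S_1^*,E_1^*,\dots)=(kp,(1-k)p,0,\dots,0)$ with $0<k<1$, and exploit a block structure of the Jacobian together with the Metzler-matrix machinery of Definitions~\ref{def:metzlermats}--\ref{def:metzlerstab}. First I would order the states as $(S_1,S_2,R_1,R_2;\,E_1,I_1,A_1,H_1,E_2,I_2,A_2,H_2)$. Because at any disease-free point the force of infection and all its partial derivatives with respect to the non-infected variables vanish (the factors $c^A_i=e^{-a_iI_h/N}$ equal $1$ and have zero gradient there, since $I_h=0$), the Jacobian is block upper-triangular, $J^*=\begin{pmatrix}J_{\mathrm{beh}}&C\\0&J_{\mathrm{inf}}\end{pmatrix}$, with a $4\times4$ ``behavioral'' block governing $(S_1,S_2,R_1,R_2)$ and an $8\times8$ ``infected'' block; hence $\mathrm{spec}(J^*)=\mathrm{spec}(J_{\mathrm{beh}})\cup\mathrm{spec}(J_{\mathrm{inf}})$.

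Next I would handle $J_{\mathrm{inf}}$. It is a Metzler matrix, and in the next-generation splitting $J_{\mathrm{inf}}=\mathcal F-\mathcal V$ (with $\mathcal F\ge0$ collecting the new-infection terms feeding $E_1,E_2$, and $\mathcal V$ collecting disease progression together with the conservative inter-group influence terms), a short computation---summing the two group-copies of each compartment so that the influence terms cancel---shows $\rho(\mathcal F\mathcal V^{-1})=\RR_c$, \emph{independently of $k$ and of the influence rates}. The standard characterization of Metzler-stability via the spectral radius of $\mathcal F\mathcal V^{-1}$ then gives: $J_{\mathrm{inf}}$ is Metzler-stable iff $\RR_c<1$, and it has an eigenvalue with positive real part if $\RR_c>1$. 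This is exactly why the same threshold $\RR_c<1$ appears in Theorems~\ref{thm:LASG1DFE} and \ref{thm:LASG2DFE}.

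Then I would analyze $J_{\mathrm{beh}}$. Since the recovered equations carry no disease-free source and do not couple back into $S$, $J_{\mathrm{beh}}$ is itself block upper-triangular with an $(S_1,S_2)$-block $A_S$, the coupling $\xi I_2$, and an $(R_1,R_2)$-block $-\xi I_2+L_R$; the latter is Metzler with the columns of $L_R$ summing to zero, so (using $\xi>0$) its eigenvalues are $-\xi$ and $-\xi-(c^B_{21}(1-k)+c^B_{12}k)<0$. Differentiating the influence terms at the equilibrium gives $A_S=(c^B_{12}-c^B_{21})M$ with $M$ singular and $\operatorname{tr}M=1-2k$, so $\mathrm{spec}(A_S)=\{0,\ (c^B_{12}-c^B_{21})(1-2k)\}$. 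When $\Gamma=1$ (i.e.\ $c^B_{12}=c^B_{21}$) we get $A_S=0$, so $J_{\mathrm{beh}}$ has a double eigenvalue $0$; I would then verify its geometric multiplicity is also $2$ (the kernel is the $(S_1,S_2)$-plane, which is precisely the tangent space to the two-dimensional set of disease-free equilibria present when $\Gamma=1$), so the zero eigenvalue is semisimple. For the instability direction, $\RR_c>1$ forces a positive eigenvalue of $J_{\mathrm{inf}}$; and when $\Gamma\neq1$ the configuration $(kp,(1-k)p,0,\dots)$ is not even an equilibrium---the net influence flux equals $k(1-k)p(c^B_{12}-c^B_{21})\neq0$ and drives mass toward Group~1 or Group~2 (consistent with Theorems~\ref{thm:LASG1DFE} and \ref{thm:LASG2DFE}).

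Finally I would assemble: if $\RR_c<1$ and $\Gamma=1$, every eigenvalue of $J^*$ has non-positive real part and its semisimple zero eigenvalue corresponds exactly to the manifold of disease-free equilibria, so there is no drift on the center manifold and G3DFE is stable (Lyapunov, not asymptotic); the two instability cases are as above. The steps I expect to require the most care are (a) confirming that the inter-group influence terms leave $\rho(\mathcal F\mathcal V^{-1})$ equal to the single-group quantity $\RR_c$ rather than some $k$-dependent expression, and (b) the bookkeeping needed to extract genuine (non-asymptotic) stability from a non-hyperbolic linearization---i.e.\ arguing that the center manifold consists of equilibria, since Hartman--Grobman does not apply.
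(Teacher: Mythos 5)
Your proposal is correct, and it is worth noting that the paper never actually prints a proof of this theorem (it states that the argument parallels the Appendix~B proof of Theorem~\ref{thm:LASG1DFE}), so the fair comparison is with that proof. Your outer skeleton is the same: order the states so the Jacobian at the disease-free point is block upper-triangular, with a $4\times4$ behavioral block and an $8\times8$ infected block, and treat the two blocks separately. You diverge in two substantive ways. For the infected block the paper peels off eigenvalues by repeatedly applying the Metzler Schur-complement criterion (Proposition 3.1 of \cite{godsend}) until $\RR_c<1$ emerges as the surviving condition; you instead use the next-generation splitting and note that summing the two group-copies of each infected compartment cancels the conservative influence terms, so $\rho(\mathcal F\mathcal V^{-1})=\RR_c$ independently of $k$ and of $c^B_{12},c^B_{21}$. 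This is shorter, and it explains at a glance why all three DFE share the same threshold; to make it airtight you should record that $\mathcal V$ is a nonsingular M-matrix and that the nonzero spectrum of $\mathcal F\mathcal V^{-1}$ coincides with that of the aggregated $4\times4$ problem (via $\mathcal F=G\Pi$ and $\Pi\mathcal V=\mathcal V_0\Pi$ for the lumping map $\Pi$). Second, you are more careful than the paper about the degeneracy: your computation of the behavioral block (eigenvalues $0$, $(c^B_{12}-c^B_{21})(1-2k)$, $-\xi$, $-\xi-c^B_{21}(1-k)-c^B_{12}k$, consistent with the paper's $k=1$ case) shows that at $\Gamma=1$ the zero eigenvalue is double, and you correctly resolve the non-hyperbolicity by observing that the center subspace is exactly the tangent space to the two-parameter family of disease-free equilibria, so the center manifold consists of equilibria and only Lyapunov (not asymptotic) stability follows; the paper's own Theorem~\ref{thm:LASG1DFE} proof stops at declaring marginal stability from the spectrum. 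Your observation that for $0<k<1$ and $\Gamma\neq1$ the G3DFE is not even an equilibrium (net influence flux $k(1-k)p(c^B_{12}-c^B_{21})\neq0$) is the honest reading of the ``unstable when $\Gamma\neq1$'' clause, which the paper's statement glosses over, and it is the right way to handle that case since the linearized eigenvalue $(c^B_{12}-c^B_{21})(1-2k)$ changes sign at $k=1/2$ and so cannot by itself deliver instability uniformly in $k$.
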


\noindent The epidemiological implication of Theorems \ref{thm:LASG1DFE}-\ref{thm:LASG3DFE} is that a small influx of SARS-CoV-2 infected individuals will not generate a large outbreak of SARS-CoV-2 in the community if $\RR_c$ is less than unity. The quantity $\RR_c$ is the \textit{control reproduction number} of the 2-group model \eqref{eq:n2model}, which represents the average number of new SARS-CoV-2 infections generated by an infectious individual introduced into a population where non-pharmaceutical public health interventions (such as lockdown measures) are implemented \cite{hethcote_siam, andersonandmay}. In the absence of non-pharmaceutical interventions (i.e., $\theta_l=1$), the control reproduction number $\RR_c$ reduces to  
\begin{align}
   \RR_0=\RR_c\lvert_{\theta_l=1}= \frac{r\beta_i}{\sigma_i} + \dss\frac{(1-r)\beta_a}{\gamma_a} + \dss\frac{rq\beta_h}{\gamma_h+\delta_h}, 
\end{align}\label{eq:R0}
\noindent where $\RR_0$ is the {\it basic reproduction number} of the model \eqref{eq:n2model} (which measures the average number of new infections generated by an infectious individual introduced into a completely susceptible population). The results of Theorems \ref{thm:LASG1DFE}- \ref{thm:LASG3DFE} are numerically illustrated in Figure \ref{fig:rc_less} where numerous initial conditions of the model are shown to converge to one of the three non-trivial disease-free equilibria when $\RR_c<1$.\\
\indent However, when $\RR_c>1$, simulations suggest that all initial solutions of the 2-group model \eqref{eq:n2model} converge to the TDFE (see Figure \ref{fig:rc_greater}). Hence, the 2-group behavior model \eqref{eq:n2model} undergoes a transcritical bifurcation at $\RR_c=1$. This result is a consequence of the fact that the 2-group behavior model \eqref{eq:n2model} is an epidemic model, which assumes a closed population (no immigration or emigration), no demographics (no birth rate or natural death rate) and recovery from SARS-CoV-2 infection does not induce permanent immunity against future infections (i.e., $\xi\neq0$). Under these assumptions, every member of the community will acquire infection and succumb to the disease, and (since there is no replenishment or influx of new susceptible individuals into the community, by birth or immigration) the population will eventually die out. This does not happen when $\xi=0$, meaning when infection \textit{does} induce permanent immunity the disease dies out while the population does not (see Figure \ref{fig:xi0} in Appendix C).\\\\
\noindent The results of Theorems \ref{thm:LASG1DFE}-\ref{thm:LASG3DFE}, along with the simulations in Figure \ref{fig:rc_greater}, are numerically illustrated in Figure \ref{fig:stabilitycountour}, showing the stability regions of the disease-free equilibria (G1DFE, G2DFE, G3DFE, TDFE) for various values of the control reproduction number, $\RR_c$, and the relative influence ratio, $\Gamma$.
\begin{figure}[H]
    \centering \textbf{(a) $\Gamma>1$}\quad\quad\quad\quad\quad\quad\quad\textbf{(b) $\Gamma<1$}\quad\quad\quad\quad\quad\quad\quad\textbf{(c) $\Gamma=1$}\\
    \includegraphics[width=0.25\linewidth]{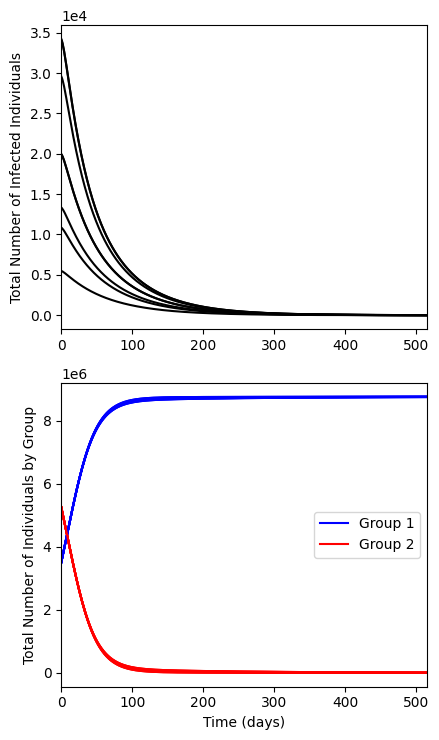}\includegraphics[width=0.25\linewidth]{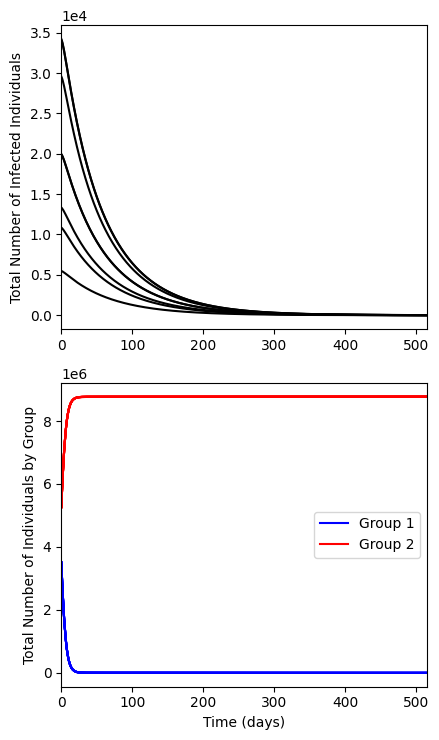}\includegraphics[width=0.25\linewidth]{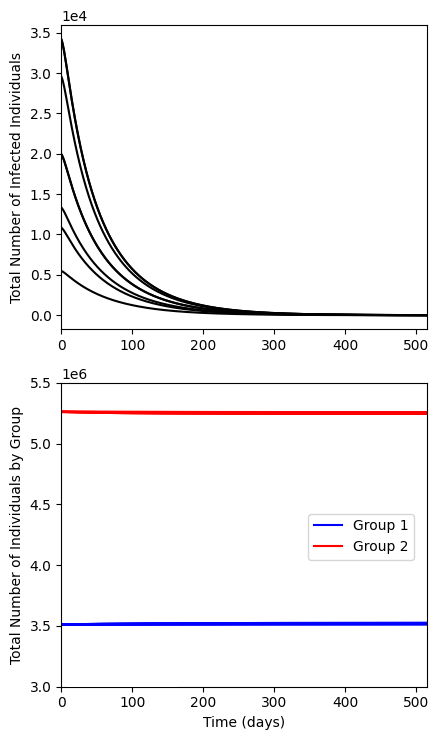}
    \caption{Simulations of the 2-group behavior model \eqref{eq:n2model}, showing the profile of the total number of infected individuals (top) and the total number of individuals in the community, stratified by behavioral group ($N_1(t)$ and $N_2(t)$)(bottom), as a function of time for various initial conditions. Column (a): $c^B_{12}=0.5,c^B_{21}=0.25$ ($\Gamma=2>1$). Column (b): $c^B_{12}=0.25, c^B_{21}=0.5$ ($\Gamma=0.5<1$). Column (c): $c^B_{12}=c^B_{21}=0.5$ ($\Gamma=1$). In all panels, all other parameter values used in these simulations are as given in Tables \ref{tab:fixedparams} and \ref{tab:fittedparams_full}, but with $\theta_l=1$, $\beta_a=0.1$, and $\beta_i=0.05$ (so that, $\RR_c=0.78<1$). These simulations show that when $\RR_c<1$, solutions of model \eqref{eq:n2model} converge to G1DFE if $\Gamma>1$ (see Figure(a)), G2DFE if $\Gamma<1$ (see Figure(b)), and G3DFE if $\Gamma=1$ (see Figure(c)), in line with Theorems \ref{thm:LASG1DFE}-\ref{thm:LASG3DFE}.}
    \label{fig:rc_less}
\end{figure}
\begin{figure}[H]
    \centering 
    \textbf{(a)} \quad\quad\quad\quad\quad\quad\quad\quad \textbf{(b)} \quad\quad\quad\quad\quad\quad\quad\quad\textbf{(c)}\\
    \includegraphics[width=0.7\linewidth]{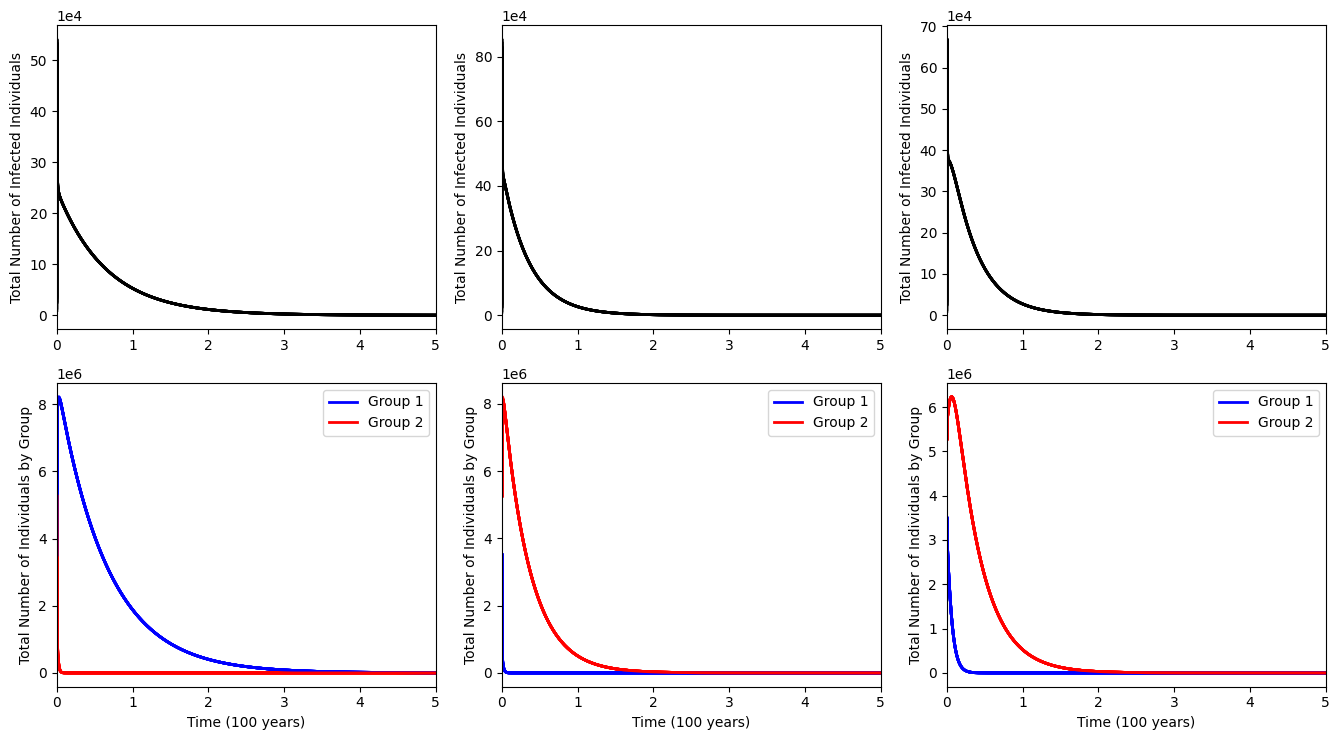}
    \caption{Simulations of the 2-group behavior model \eqref{eq:n2model}, showing the profile of the total number of infected individuals (top) and the total number of individuals in the community, stratified by behavioral group ($N_1(t)$ and $N_2(t)$)(bottom), as a function of time for various initial conditions. Column (a): $c^B_{12}=0.5,c^B_{21}=0.25$ ($\Gamma=2>1$). Column (b): $c^B_{12}=0.25, c^B_{21}=0.5$ ($\Gamma=0.5<1$). Column (c): $c^B_{12}=c^B_{21}=0.5$ ($\Gamma=1$). In all panels, all other parameter values used in these simulations are as given in Tables \ref{tab:fixedparams} and \ref{tab:fittedparams_full}, but with $\theta_l=1$, $\beta_a=1$, and $\beta_i=0.5$ (so that, $\RR_c=7.8>1$). These simulations show that when $\RR_c>1$, solutions of the 2-group model \eqref{eq:n2model} converge to the extinction equilibrium (TDFE), regardless of the value of $\Gamma$.}
    \label{fig:rc_greater}
\end{figure} \vspace{-5mm}
\begin{figure}[H]
    \centering \includegraphics[width=0.4\linewidth]{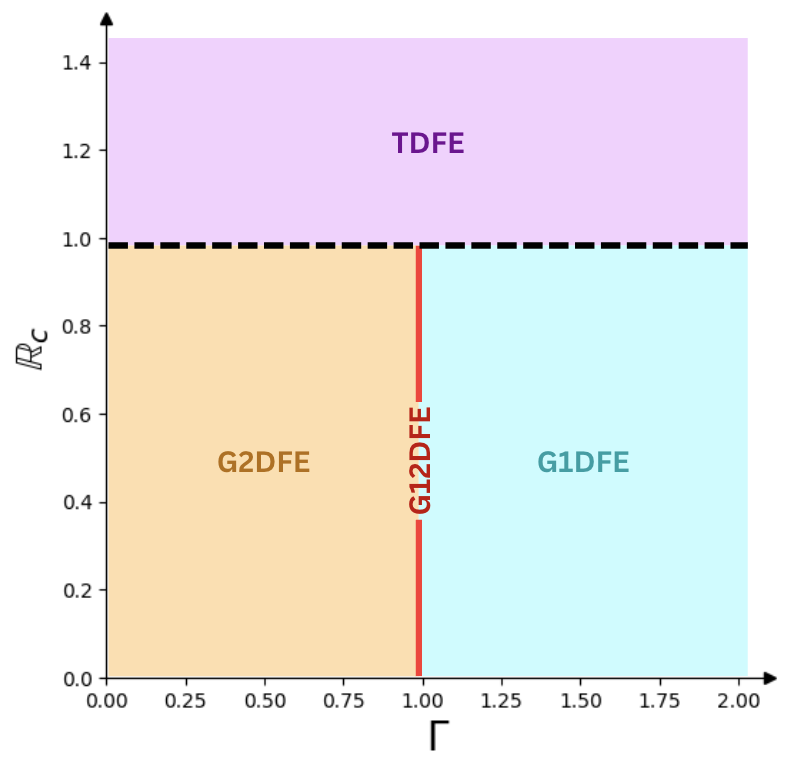}
    \caption{Stability regions of different disease-free equilibria of the 2-group behavior model \eqref{eq:n2model}, as determined by the values of the control reproduction number $\RR_c$ and the relative influence ratio $\Gamma$. This figure shows that the 2-group behavior model \eqref{eq:n2model} has a bifurcation at $\RR_c=1$ and a separatrix at $\Gamma=1$.}
    \label{fig:stabilitycountour}
\end{figure}
\subsection{Data fitting and parameter estimation for the 2-group model \eqref{eq:n2model}}\label{sec:fitting}
\noindent The 2-group behavior model \eqref{eq:n2model} has 15 parameters. The baseline values of ten of these parameters are known from the literature, and the baseline values of the remaining five are unknown. The 2-group behavior model will now be fitted to the observed daily hospitalization data (which was smoothed into 7-day running average data) for New York City \cite{nycgov_data} during the first wave of the SARS-CoV-2 pandemic to estimate the five unknown parameters. For notational convenience, the known (unknown) parameters of the 2-group behavior model are categorized as {\it fixed} ({\it fitted}) parameters.  Furthermore, for model fitting purposes, the following five time intervals or phases for the implementation of the SARS-CoV-2 lockdown measures in New York City are considered  \cite{314_1,44_1,44_2,528_1,528_2,825_1,825_2,825_3}:
\begin{enumerate}
\item[(a)] Phase 0 ({\it pre-lockdown phase}): time period from January 20, 2020 (day of the index case in the US) to the beginning of initial lockdown (March 13, 2020, \cite{314_1}). For this phase, the lockdown parameter, $\theta_l$, is set to 1 (i.e., no lockdown measures are implemented in New York City yet).
\item[(b)] Phase 1 (\textit{initial lockdown phase}): this accounts for the time period from March 14, 2020 to April 4, 2020, \cite{44_1,44_2} when initial lockdown measures were implemented in New York City. Here, the lockdown parameter is represented by $\theta_{l,1}$ which will be fitted for.
\item[(c)] Phase 2 (\textit{hard lockdown phase}): this accounts for the time period from April 5, 2020 to May 27, 2020 \cite{528_1,528_2}, when stringent lockdown measures were implemented in New York City. Here, the lockdown parameter is represented by $\theta_{l,2}$ which will be fitted for.
\item[(d)] Phase 3 (\textit{first reopening phase}): this accounts for the time period from May 28, 2020 to August 25, 2020 \cite{825_1,825_2,825_3}, when some of the lockdown measures were lifted in New York City. Here, the lockdown parameter is represented by $\theta_{l,3}$ which will be fitted for.
\item[(e)] Phase 4 (\textit{second/final reopening phase}): this accounts for the time period from August 25, 2020 onward when more lockdown measures were lifted. Here, the lockdown parameter is represented by $\theta_{l,4}$, whose value will be obtained from fitting the model with data.
\end{enumerate}
Thus, based on these lockdown implementation phases, the NPI parameter, $\theta_l$, in the model \eqref{eq:n2model} is now explicitly defined in terms of the following piecewise-defined function:
\begin{align}
    \theta_{l} = \begin{cases} 
      1 & \text{during Phase 0} \\
      \theta_{l,1} & \text{during Phase 1} \\
      \theta_{l,2} & \text{during Phase 2} \\
      \theta_{l,3} & \text{during Phase 3} \\
      \theta_{l,4} &  \text{during Phase 4}
   \end{cases}\label{eq:theta}
\end{align} 
This will result in three additional (eight total) unknown parameters to be fitted. The values of the fixed parameters of the 2-group behavior model \eqref{eq:n2model} used in fitting it to the observed data \cite{nycgov_data} are briefly described below.
\subsubsection{Baseline values of the fixed parameters of the 2-group model \eqref{eq:n2model}}\label{sec:paramsfixed}
\noindent The baseline values of all the disease-related parameters of the 2-group behavior model \eqref{eq:n2model} have been estimated in the literature. For instance, the effective contact rate for disease transmission by asymptomatically-infectious individuals ($\beta_a$) during the first wave of the SARS-CoV-2 pandemic in the United States is estimated to be between 0.34 and 0.72 {\it per} day \cite{Gumel4,Gumel5}. Similarly, the transmission rates for symptomatic infectious individuals ($\beta_i$) during the same period is estimated to be between 0.36 and 0.45 {\it per} day \cite{Gumel4,Gumel1}. The rate of transition out of the exposed class ($\sigma_e$) to the asymptomatic-infectious class or symptomatically-infectious class has been estimated to be 1/4 {\it per} day \cite{sigmae1, sigmae2}.  The proportion ($r$) of these exposed individuals who show clinical symptoms of the disease at the end of the exposed period is estimated to be 0.6 \cite{sigmap_brozak}. The rate of transition out of the symptomatic-infectious class ($\sigma_i$) is estimated to be 1/14 {\it per} day\cite{gammaa1}, and the proportion, $q$, of symptomatic infectious individuals who are hospitalized is estimated to be 0.05 \cite{cdc1}. Dan \textit{et al}. \cite{xi_dan} estimated the rate of loss of infection-acquired immunity ($\xi$) during the first wave of the SARS-CoV-2 pandemic to be 1/180 {\it per} day. Furthermore, asymptomatic and hospitalized individuals recover within 9 and 10 days, respectively (so that, $\gamma_a=1/9$ {\it per} day and $\gamma_h=1/10$ {\it per} day) \cite{gammaa1,gammaa_kissler}. Finally, individuals hospitalized with SARS-CoV-2 during the first wave of the pandemic suffer disease-induced mortality ($\delta_h$) at a rate 0.41 {\it per} day \cite{deltah}. The baseline values of these fixed parameters of the 2-group model \eqref{eq:n2model} are tabulated in Table \ref{tab:fixedparams}. 
\begin{table}
\begin{center}
\begin{tabular}{|c|c|c|}
\hline
\textbf{Parameter} & \textbf{Baseline value}  & \textbf{Source}  \\
 \hline \hline
  $\beta_{a}$  & 0.625 day$^{-1}$  &  \cite{Gumel4,Gumel5}\\ \hline
  $\beta_{s}$ & 0.375 day$^{-1}$&  \cite{Gumel4,Gumel1}\\ \hline
  $\xi$ & 1/180 day$^{-1}$& \cite{xi_dan} \\ \hline
  $\sigma_{e}$ & $1/4$ day$^{-1}$& \cite{sigmae1, sigmae2} \\ \hline
  $\sigma_{s}$ & $1/14$ day$^{-1}$& \cite{gammaa1} \\ \hline
  $\gamma_{a}$ & $1/9$ day$^{-1}$ & \cite{gammaa1} \\ \hline
  $\gamma_{h}$ & $1/10$ day$^{-1}$ & \cite{gammaa_kissler} \\ \hline
  $r$ & $0.6$ (dimensionless)& \cite{sigmap_brozak} \\ \hline
  $q$ & $0.05$ (dimensionless)  & \cite{cdc1} \\ \hline
  $\delta_h$ &  $0.41$ day$^{-1}$  &  \cite{deltah} \\ \hline
\end{tabular}
\caption{Baseline values of the fixed parameters of the 2-group model \eqref{eq:n2model} during the first wave of the SARS-CoV-2 pandemic in New York City.} \label{tab:fixedparams}
\end{center}
\end{table}
\subsubsection{Values of estimated parameters of the 2-group model obtained from data fitting}\label{sec:paramsfitted}
\noindent The 2-group behavior model \eqref{eq:n2model} is fitted to the 7-day moving average of the observed daily SARS-CoV-2 hospitalization data for the City of New York during the first wave of the pandemic \cite{nycgov_data} by using a suitable optimization method to estimate the eight unknown parameters (namely, the four behavior-related parameters: $a_1$, $a_2$, $c^B_{12}$, $c^B_{21}$, and the four phased lockdown implementation parameters: $\theta_{l,1}$, $\theta_{l,2}$, $\theta_{l,3}$, and $\theta_{l,4}$, defined in equation \eqref{eq:theta}). In particular, the BFGS algorithm (embedded in the \texttt{LMFit} library in python, and chosen due to its fast convergence rate and robustness in handling noisy data and local minima \cite{BFGS}) is used due to fit the 2-group behavior model \eqref{eq:n2model}). The results obtained from fitting the model to the 7-day running average hospitalization data \cite{nycgov_data} are depicted in Figure \ref{fig:predict_full}. This figure shows a very good fit of the 2-group model (blue curve) to the observed 7-day moving average of the daily hospitalization data for New York City (red dots) for the fitting period (i.e., the time period between February 29th, 2020 till July 28th, 2020). 
The estimated values of the {\it fitted parameters} (obtained from fitting the 2-group behavior model \eqref{eq:n2model} with the data), together with their associated 95\% confidence intervals, are tabulated in Table \ref{tab:fittedparams_full}. The accuracy of the fitted 2-group model is assessed by using it to predict the second wave of the SARS-CoV-2 pandemic in New York City.  To do this, the 2-group behavior model \eqref{eq:n2model} is simulated using the fixed and estimated parameters in Tables \ref{tab:fixedparams} and \ref{tab:fittedparams_full}, respectively, and compared with the available (withheld) 7-day running average hospitalization data for the second wave. The results obtained, depicted to the right of the dashed vertical black line of this figure, showed that the fitted model (green curve) almost perfectly predicts the second wave (red dots to the right of the dashed vertical black line).
\begin{figure}[H]
    \centering
\includegraphics[width=150mm,scale=1]{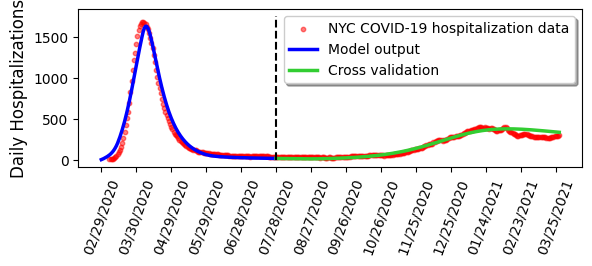}
    \caption{Data fitting and cross validation of the 2-group behavior model \eqref{eq:n2model}, using the 7-day running average daily hospitalization data for New York City \cite{nycgov_data} (red dots), carried out using the BFGS algorithm. The values of the fixed parameters of the 2-group model, used in fitting it with the data, are given in Table \ref{tab:fixedparams}, and the values of the estimated parameters of the 2-group model (obtained from the data fitting), together with their associated 95\% confidence intervals, are given in Table \ref{tab:fittedparams_full}. The data fitting (blue curve) is carried out using the data from the first 150 days after the index case in New York City (i.e., from February 29, 2020 until July 28, 2020), while the cross-validation (green curve) is carried out using the following eight months worth of data (i.e., from July 29, 2020 until March 25, 2021). The eight unknown parameters of the 2-group model were estimated from the data fitting to the model, and their estimated values are tabulated in Table \ref{tab:fittedparams_full}.}
    \label{fig:predict_full}
\end{figure}
\begin{center}
\begin{table}[H]
    \centering
\begin{tabular}{|p{1.8cm}||p{4.5cm}|p{3.3cm}|}
     \hline
     Parameter & Baseline value  & 95\% CI \\
     \hline
     \hline
     $a_1$ & 8,000 (dimensionless) & (7,923.4,$\infty$) \\ \hline
     $a_2$ & 2,800 (dimensionless)  &  (2,790.7,7,923.4)\\ \hline
      $c^B_{12}$ & 1/30 day$^{-1}$ & (0.031,0.036) \\ \hline
      $c^B_{21}$ & 1/90 day$^{-1}$ & (0.0087,0.014) \\ \hline
       $\theta_{l,1}$ & 0.74664 (dimensionless) & (0.65,0.75) \\ \hline
       $\theta_{l,2}$ & 0.00133 (dimensionless) & (0,0.0061) \\ \hline
       $\theta_{l,3}$ & 0.15427 (dimensionless) & (0.15,0.16) \\ \hline
       $\theta_{l,4}$ & 0.30970 (dimensionless) & (0.29,0.34) \\ \hline
    \end{tabular}\caption{Baseline values of the estimated parameters for the 2-group behavior model \eqref{eq:n2model}, obtained by fitting it to the 7-day running average data of daily hospitalizations in New York City \cite{nycgov_data} from February 29, 2020 till July 28, 2020 using the BFGS algorithm. The values of the fixed parameters of the 2-group model, used in the fitting of the model, are as given in Table \ref{tab:fixedparams}.}
       \label{tab:fittedparams_full}
\end{table}
\end{center} \vspace{-20mm}
It can be seen from Table \ref{tab:fittedparams_full} that individuals in Group 1 reduce their contacts ($c_1^A(t))$, defined in \eqref{eq:cia}) at a rate much faster than that of individuals in Group 2 ($c_2^A(t))$, defined in \eqref{eq:cia}). This is because the estimated value of the exponent, $a_1$ (for the hospitalization-induced behavioral change for the individuals in Group 1), tabulated in Table \ref{tab:fittedparams_full}, is much higher (almost 3 times) than that of the exponent, $a_2$ (for the hospitalization-induced behavioral change for Group 2). It should be noted that individuals in Group 2 also reduce their risk-taking behavior in response to the level of hospitalization in the community ({\it albeit} only slightly; recall that, $a_2=2,800\ne0$). The profile of estimated contact rates for individuals in Group 1 ($c_1^A(t)$; blue curve) and Group 2 ($c_2^A(t)$; red curve) is shown in Figure \ref{fig:contactrate_time}, along with simulated daily hospitalizations ($I_h(t)$) in the population (computed using the 2-group behavior model \eqref{eq:n2model} with parameters in Tables \ref{tab:fixedparams} and \ref{tab:fittedparams_full}, shown in green curve). This figure shows that, at the peak of the SARS-CoV-2 hospitalizations in New York City during the first wave (highlighted by the yellow shaded region), individuals in Group 1 dramatically decreased their daily contacts (by about 80\%, compared to their baseline contact rate; see the blue curve at the peak of the green curve). In contrast, individuals in Group 2 only decreased their daily contacts by about 40\% during the same period (see the red curve, at the peak of the green curve). A similar phenomenon occurred at the peak of the second wave (highlighted by the shaded purple region), but with the individuals in Group 1 decreasing contacts only by about to 30\% (in comparison to their baseline), while Group 2 individuals only decreased their contacts by a little more than 10\% (in comparison to the baseline). The lower reduction in contacts during the second wave of the SARS-CoV-2 pandemic in New York City may have been a result of masking fatigue \cite{nycfatigue}, or perhaps the relatively lower hospitalization rate during this time period (as compared to the first wave) resulted in a lower perception of risk.\\\\
\begin{figure}
    \centering
\includegraphics[width=115mm,scale=1]{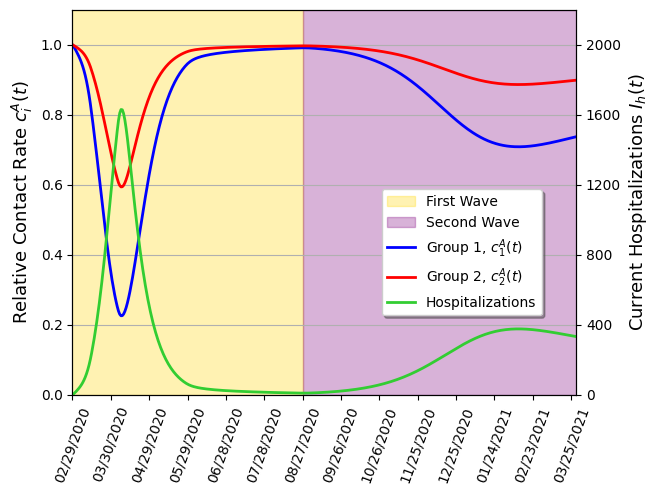}
    \caption{Profile of the relative contact rates ($c_i^A(t)$ for $i=1,2$) as compared with the number of hospitalized individuals over time ($I_h(t)$). This figure is generated by solving the 2-group behavior model \eqref{eq:n2model}, using the fixed and estimated parameters given in Tables \ref{tab:fixedparams} and \ref{tab:fittedparams_full}, respectively (to obtain the value of $I_h(t)$ for each $t$), together with the estimated values of the parameters $a_1$ and $a_2$ (given in Table \ref{tab:fittedparams_full}) to compute the relative contact rate multipliers for individuals in Group 1 ($c^A_1(t)$, blue curve) and Group 2 ($c^A_2(t)$, red curve) over time. The number of individuals who are currently hospitalized ($I_h(t)$) is depicted by the green curve. The time periods of the first and second waves are represented by the shaded yellow and purple regions, respectively.}
    \label{fig:contactrate_time}
\end{figure}
\noindent Table \ref{tab:fittedparams_full} also shows that the rate at which individuals in Group 2 change their behavior and become {\it more cautious} ($c^B_{12}$) is much higher ($3-$fold) than the rate at which cautious individuals (i.e., those in Group 1) change their behavior and become less cautious ($c^B_{21}$). Although the estimated value of the influence parameter $c^B_{12}$ is larger than that of the parameter $c^B_{21}$, the relative influence of Group 1 (given by $c^B_{12}\frac{N_1(t)}{N(t)}$) is initially small (due to the small initial size of Group 1; note that, in the formulation of the model \eqref{eq:n2model}, it was assumed that the majority of the population is in Group 2 during the early stages of the epidemic) when compared to the relative influence of Group 2 (given by $c^B_{21}\frac{N_2(t)}{N(t)}$) throughout the first wave. However, as Group 1 begins to increase in size  (particularly during the summer of 2020), the relative sizes of the behavioral groups begin to change rapidly (see the shaded white and purple regions of Figure \ref{fig:pop_dynam}), with Group 1 ultimately overtaking Group 2 around February of 2021 (with 70\% of the population ending up in Group 1 by March 2021, while the rest of the population (30\%) remained in Group 2). In other words, Figure \ref{fig:pop_dynam} shows that, despite its initial small size, Group 1 ultimately became dominant (i.e., the vast majority of individuals in the community significantly reduced their risk-taking behavior and adhered to public health interventions) by the second wave of the pandemic. Finally, Table \ref{tab:fittedparams_full} also shows that the efficacy of the lockdown measures implemented in New York City during phase 1 was low (with efficacy, $1-\theta_{l,1}$, estimated to be around $0.25$; that is, 25\%), but increased dramatically to nearly 100\% during the second phase (i.e., $1-\theta_{l,2}\approx 1$). The efficacy decreased during subsequent phases (down to 85\% and 70\% during phases 3 and 4, respectively). Thus, Table \ref{tab:fittedparams_full} confirms that non-pharmaceutical interventions (NPIs) and mitigation measures were not effectively implemented in New York City during the first phase (due, perhaps, in large part, to the lack of clarity and consistency in messaging, and pandemic preparedness \cite{nycfail}). However, NPIs were effectively implemented by phase 2, although this was not generally maintained during phases 3 and 4 (potentially due to increasing levels of interventions fatigue and spread of misinformation and disinformation about the pandemic \cite{nycfatigue}).\\
\begin{figure}
    \centering
\includegraphics[width=115mm,scale=1]{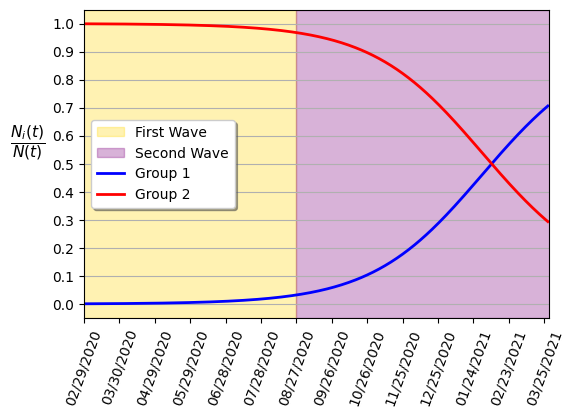}
    \caption{Profile of the relative group sizes ($N_i(t)/N(t)$ for $i=1,2$) over time. This figure is generated by solving the 2-group behavior model \eqref{eq:n2model}, using the fixed and estimated parameters given in Tables \ref{tab:fixedparams} and \ref{tab:fittedparams_full}, respectively (to obtain the values of $N_1(t)$ and $N_2(t)$ for each $t$) to compute  the population dynamics for Group 1 ($\frac{N_1(t)}{N(t)}$, blue curve) and Group 2 ($\frac{N_2(t)}{N(t)}$, red curve) over time. The time periods of the first and second waves are represented by the shaded yellow and purple regions, respectively.}
    \label{fig:pop_dynam}
\end{figure} 
\noindent It is worth stating that the estimated values of the four behavior-related parameters ($a_1$, $a_2$, $c^B_{12}$, and $c^B_{21}$) were obtained by fitting the 2-group behavior model \eqref{eq:n2model} to the 7-day moving hospitalization average data \cite{nycgov_data}.
Ideally, these parameters should be estimated by fitting this model with behavior data, which was not not widely collated and made publicly available until several months into the pandemic. As a result, 7-day running average hospitalization data was used to estimate the four behavioral parameters of the 2-group behavior model \eqref{eq:n2model}. However, as behavioral data (in the form of mask compliance on the subway in New York City \cite{behaviordatanyc}) became available starting in June 2020, the accuracy of the data fitting is further corroborated by comparing the results obtained (by simulating the model \eqref{eq:n2model} with parameters given in Tables \ref{tab:fixedparams} and \ref{tab:fittedparams_full}) for the relative contact rates with this behavioral data. The results obtained (depicted in Figure \ref{fig:noncompliance}) validate the fitted values of $a_1$ and $a_2$ by showing qualitatively similar curves between population contact rates ($c^A_1(t)$ for Group 1, $c^A_2(t)$ for Group 2, and the weighted average contact rate $N(t)(\frac{c^A_1(t)}{N_1(t)}+\frac{c^A_2(t)}{N_2(t)})$ for the full population) and the percentage of noncompliant (with respect to masking) individuals in the population. The  trends observed in each of these curves suggest that as attitudes towards the disease shift and individuals decrease their relative contacts (and become more cautious in general), the noncompliant portion of the population decreases.
\begin{figure}[H]
    \centering
    \includegraphics[width=0.9\linewidth]{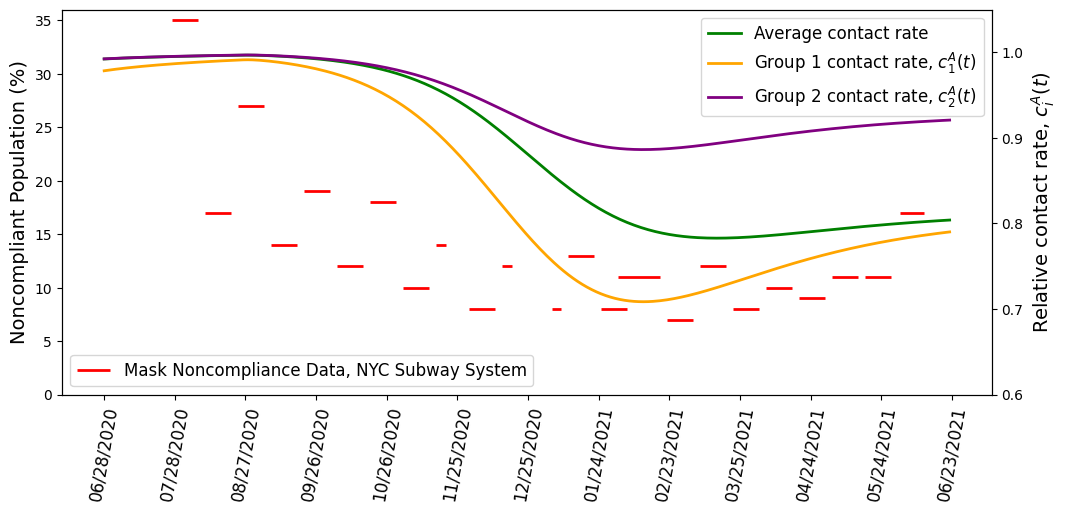}
    \caption{Profile of the contact rates ($c^A_1(t)$ and for $c^A_2(t)$) of individuals in the two behavioral groups, as a function of time, during the first two waves of the SARS-CoV-2 pandemic, superimposed on data for mask noncompliance on the New York City subway system \cite{behaviordatanyc}. Parameter values used in the simulations are as given in Tables \ref{tab:fixedparams} and \ref{tab:fittedparams_full}. Notation: purple curve represents the relative contact rate of individuals in the risk-taking group (Group 2, $c^A_2(t)$); gold curve represents the relative contact rate of individuals in the cautious group (Group 1, $c^A_1(t)$); green curve represents the population average relative contact rate ($\frac{N_1(t)}{N(t)}c^A_1(t)+\frac{N_2(t)}{N(t)}c^A_2(t)$). The profiles are compared with empirical data for mask non-compliance in New York City during the same period (shown in red dashed lines, with each dash corresponding to the data collection period).}
    \label{fig:noncompliance}
\end{figure} \vspace{-8mm}
\begin{center}
\begin{table}[H]
    \centering
\begin{tabular}{|p{1.5cm}||p{2cm}|}
     \hline
     Phase & $\RR_{c}$ \\
     \hline
     \hline
     Phase 1 & 4.045 \\ \hline
     Phase 2 & 0.016 \\ \hline
     Phase 3 & 0.929 \\ \hline
     Phase 4 & 1.539 \\ \hline
    \end{tabular}\vspace{0.5cm} \caption{Values of the control reproduction number $\RR_c$ for the 2-group behavior model \eqref{eq:n2model} (see equation \eqref{eq:RC}) during the four lockdown phases of the pandemic in New York City for model \eqref{eq:n2model}. Parameter values used in computing these values are as given in Tables \ref{tab:fixedparams} and \ref{tab:fittedparams_full}.}
       \label{tab:rcs_full}
\end{table}
\end{center} \vspace{-20mm}
{\noindent \bf Values of ${\mathbb R}_c$ for the 2-group behavior model \eqref{eq:n2model} during the four lockdown phases:} The baseline values of the fixed and fitted parameters in Tables \ref{tab:fixedparams} and \ref{tab:fittedparams_full} are now used to compute the values of the control reproduction number ($\RR_c$, given by Equation \eqref{eq:RC}) for New York City during phases 1 through 4 of the lockdown, and the results obtained are tabulated in Table \ref{tab:rcs_full}. This table shows a more pronounced outbreak during phase 1 (measured in terms of higher values of $\RR_c$; for this phase, $\RR_c\approx 4$). This value of $\RR_c$, which is in line with estimates from other modeling studies for the SARS-CoV-2 outbreak in New York City during the first wave \cite{r01,r02}, is to be expected for a community where the efficacy of the lockdown and other NPIs implemented is low (e.g., $1-\theta_{l,1}=0.25$, as estimated for New York City, during this period, in Table \ref{tab:fittedparams_full}). This resulted in a devastating first wave for New York City, which ran during the period between March 2020 and the end of July of 2020 (i.e., phases 1 and 2), causing $52,899$ hospitalizations and $23,590$ deaths \cite{nycgov_data}. Although the value of $\RR_c$ decreased dramatically (due to the stringent implementation of these control measures during phase 2), it quickly rebounded during phases 3 and 4 when the control measures were progressively relaxed (in particular, relaxation of lockdown measures during phase 4 was what resulted in the second wave, {\it albeit} the second wave was much milder than the first wave observed in New York City during phases 1 and 2). Finally, in order to quantify the importance of explicitly incorporating heterogeneous human behavior in accurately capturing the trajectory of the SARS-CoV-2 pandemic and making realistic predictions, a \textit{behavior-free} equivalent of the 2-group behavior model \eqref{eq:n2model}, obtained by setting all the behavior-related parameters of the model to zero, is now explored (by fitting the resulting behavior-free model with the same 7-day moving average of the daily hospitalization data for New York City during the first wave \cite{nycgov_data}, as discussed below). The objective is to compare which of the two models (behavior or behavior-free) best captures current, and predicts future, trajectory of the disease.

\subsection{Data fitting and parameter estimation for the behavior-free model \eqref{eq:bfmodel}}\label{sec:fitting,bf}
\noindent Setting the four behavior-related parameters ($a_1,a_2,c^B_{12},$ and $c^B_{21}$) of the 2-group behavior model \eqref{eq:n2model} to zero gives the one-group {\it behavior-free} (homogeneous) model, given by the system of nonlinear differential equations in Equation \eqref{eq:bfmodel} of Appendix A.  It should be noted that, unlike in the case of the 2-group behavior model \eqref{eq:bfmodel}, the 1-group (homogeneous) behavior-free model \eqref{eq:bfmodel} has no reduction in contacts due to the level of SARS-CoV-2 hospitalizations in the community. Figure \ref{fig:predict_bf} depicts the results of fitting the behavior-free model \eqref{eq:bfmodel} with the 7-day running average of daily hospitalization data (the values of the estimated parameters obtained from fitting this model to the data are tabulated in Table \ref{tab:fittedparams_bf}). This figure \ref{fig:predict_bf} shows that, although the behavior-free model \eqref{eq:bfmodel} fits the data for the first wave of the pandemic in New York City reasonably well, it fails to accurately capture the dynamics of the second wave. Furthermore, unlike the 2-group behavior model \eqref{eq:n2model}, which accurately captures the disease dynamics during both waves (see Figure \ref{fig:predict_full}), the behavior-free model \eqref{eq:bfmodel} over-estimated the recorded 7-day running average of daily hospitalizations during the winter of 2020 to the spring 2021 (see green curve in Figure \ref{fig:predict_bf}). Specifically, the behavior-free model over-estimated the reported daily 7-day running average of hospitalizations at the end of the second wave (on March 25, 2021) by 104\%. On the other hand, the 2-group behavior model \eqref{eq:n2model} was within 10\% of the recorded 7-day running average of hospitalizations for this date. Hence, it can be concluded, by comparing Figures \ref{fig:predict_full} and \ref{fig:predict_bf}, that epidemiological models that do not explicitly incorporate the effect of human behavior changes may fail to accurately capture the trajectory (as well as predict the future course) of the SARS-CoV-2 pandemic in a community, such as New York City during the first two waves of the pandemic. This is in line with some recent modeling studies which highlight the importance of incorporating human behavior into models for the spread of respiratory pathogens \cite{ge_behavior_binod, gg_simplebehavior_lejune, gg_heterogeneous_espinoza}.
\vspace*{2mm}\ \\ \noindent It is also worth noting from Table \ref{tab:fittedparams_bf} that the estimated value of the NPI implementation parameter during phase 1 ($\theta_{l,1}$), generated from fitting the behavior-free model \eqref{eq:bfmodel}, is much lower ($\theta_{l,1}=0.45$) than the corresponding value obtained from fitting the 2-group behavior model \eqref{eq:n2model} (where $\theta_{l,1}=0.74$).  In other words, in order for the behavior-free model \eqref{eq:bfmodel} to fit the first wave data accurately, the estimated efficacy of NPIs in curtailing the pandemic in New York City during the first phase had to be quite high (about 55\%). This is significantly higher than the estimated efficacy of this parameter obtained from fitting the behavior model with the first wave data (which was about 26\%). It is reasonable to conclude that the behavior-free model may have over-estimated the efficacy of the NPI implementation during the early stages of the first wave of the SARS-CoV-2 pandemic in New York City (since media reports suggest considerable inconsistency and lack of clarity in messaging in New York City during this period \cite{nycfail}). Thus, in addition to over-estimating the hospitalization burden during the second wave, the behavior-free model may have over-estimated the effectiveness of NPI measures implemented in New York City during the first phase of the SARS-CoV-2 pandemic. However, the estimated values of the NPI parameters during phases 2 through 4 ($\theta_{l,2},\theta_{l,3},\theta_{l,4}$), obtained from fitting the behavior and behavior-free models, were similar (see Tables \ref{tab:fittedparams_full} and \ref{tab:fittedparams_bf}). In other words, both the 2-group behavior model and the 1-group behavior-free model gave similar (and possibly reasonable) estimates for the effectiveness of NPIs during subsequent phases of the SARS-CoV-2 pandemic (from April 4, 2020 to March 25, 2021) in New York City. 
\vspace*{2mm}\ \\ \noindent {\noindent \bf Values of ${\mathbb R}_c$ for the behavior-free model \eqref{eq:bfmodel} during the four lockdown phases:} Here, too, the fixed and fitted parameters for the behavior-free model \eqref{eq:bfmodel}, tabulated in Tables \ref{tab:fixedparams} and \ref{tab:fittedparams_bf}, are used to compute the values of the control reproduction number (${\mathbb R}_c$) associated with the behavior-free model, given by Equation \eqref{eq:RC}.  The results obtained are given in Table \ref{tab:rcs_bf}. It should be noted here that the aforementioned low estimate of the NPI parameter (i.e., high efficacy of NPIs) during phase 1 of the pandemic reduces the value of the control reproduction number (${\mathbb R}_c$) to 2.43, as against the value ${\mathbb R}_c=4.045$ estimated from fitting the 2-group behavior model (compare Tables \ref{tab:rcs_full} and \ref{tab:rcs_bf}).
\begin{center}
    \begin{table}[H]
    \centering
\begin{tabular}{|p{1.8cm}||p{4.5cm}|}
     \hline
     Parameter & Baseline value  \\
     \hline
     \hline
       $\theta_{l,1}$ & 0.45  (dimensionless)  \\ \hline
       $\theta_{l,2}$ & 0.0153 (dimensionless)  \\ \hline
       $\theta_{l,3}$ & 0.178  (dimensionless)  \\ \hline
       $\theta_{l,4}$ & 0.248  (dimensionless)  \\ \hline
    \end{tabular}\caption{ Baseline values of the estimated parameters of the behavior-free model \eqref{eq:bfmodel} (with fixed parameters as given in Table \ref{tab:fixedparams}) obtained by fitting the model to the 7-day running average data of daily hospitalizations in New York City \cite{nycgov_data} for the period from February 29, 2020 until July 28, 2020 using the BFGS algorithm.}
       \label{tab:fittedparams_bf}
\end{table}
\end{center}\vspace{-10mm}
\begin{figure}[H]
    \centering
\includegraphics[width=160mm,scale=1]{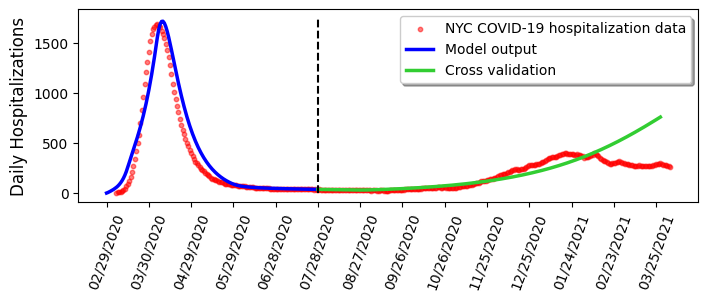}
    \caption{Data fitting and cross validation of the behavior-free model \eqref{eq:bfmodel}, using the seven-day running average daily hospitalization data for New York City (red dots) \cite{nycgov_data}, carried out using the BFGS algorithm. The values of the fixed parameters of the behavior-free model, used in fitting it to the data, are given in Table \ref{tab:fixedparams}, and the values of the estimated parameters of the model (obtained from the fitting) are given in Table \ref{tab:fittedparams_bf}. The fitting (blue curve) is carried out using the data for the first 150 days after the index case in New York City (i.e., from February 29, 2020 until July 28, 2020), while the cross-validation (green curve) is carried out using the following eight months worth of data (i.e., from July 29, 2020 until March 25, 2021). The four unknown parameters of the behavior-free model ($\theta_{l,1},\theta_{l,2},\theta_{l,3},\theta_{l,4}$) were estimated from fitting the model with the data, and their estimated values are tabulated in Table \ref{tab:fittedparams_bf}.}
    \label{fig:predict_bf}
\end{figure}
\begin{center}
\begin{table}[H]
    \centering
\begin{tabular}{|p{1.5cm}||p{1.5cm}|}
     \hline
     Phase & $\RR_{c}$ \\
     \hline
     \hline
     Phase 1 & 2.43\\ \hline
     Phase 2 & 0.083 \\ \hline
     Phase 3 & 0.961 \\ \hline
     Phase 4 & 1.339 \\ \hline
    \end{tabular}\vspace{0.5cm} \caption{Values of the control reproduction number $\RR_c$ for the behavior-free model \eqref{eq:bfmodel} during the four lockdown phases of the SARS-CoV-2 pandemic in New York City. Parameter values used to compute the values of $\RR_c$ are as given in Tables \ref{tab:fixedparams} and \ref{tab:fittedparams_bf}.}
       \label{tab:rcs_bf}
\end{table}
\end{center} \vspace{-20mm}
\noindent Since the 2-group behavior model \eqref{eq:n2model} has been shown to be more realistic in capturing the correct trajectory and burden of the pandemic (in comparison to the more parsimonious behavior-free equivalent \eqref{eq:bfmodel}), the remainder of this study will focus on the 2-group behavior model \eqref{eq:n2model}. To determine what parameters have the greatest impact on the 2-group behavior model \eqref{eq:n2model} (measured in terms of some chosen public health outcome metrics, such as peak daily hospitalizations and cumulative mortality), detailed global sensitivity analysis of the 2-group behavior model, with respect to the chosen health outcome metrics, will now be carried out.
\subsection{Sensitivity analysis for the 2-group behavior model}\label{sec:sensitivity}
\noindent Having calibrated and validated the 2-group behavior model \eqref{eq:n2model}, the next step is to conduct a global sensitivity analysis to determine which of its 18 parameters have the highest impact on a chosen response function. Since the reproduction thresholds of the 2-group behavior model (denoted by ${\mathbb R}_c$ and ${\mathbb R}_0$, given by Equations \eqref{eq:RC} and \eqref{eq:R0}, respectively) do not include the behavior-related parameters of the model, the sensitivity analysis will be carried out with respect to two distinct disease burden-related metrics, namely (a) peak daily SARS-CoV-2 hospitalizations and (b) cumulative mortality, during the first and second waves of the pandemic in New York City. In particular, the sensitivity analysis will be conducted using partial rank correlation coefficients (PRCCs) \cite{prcc1, prcc2, prcc3}, which entails defining each of the 18 parameters of the 2-group behavior model \eqref{eq:n2model} as a distribution (typically uniform) over a range (or an interval).  Following \cite{prcc_intervals}, each of these parameter ranges is determined by taking 40\% to the left and to the right of the baseline value of the corresponding parameter (tabulated in Tables \ref{tab:fixedparams} and \ref{tab:fittedparams_full}). The ranges are divided into 1,000 sub-intervals of equal length and the parameter set is drawn from this set without replacement. This leads to an $18\times 1,000$ parameter matrix (or a {\it hypercube}). PRCC values range from $-1$ to $1$ (and parameters with positive PRCC values are said to be positively correlated with the  chosen response function, while parameters with negative PRCC values are said to be negatively correlated with the response function). A higher magnitude PRCC value (say, a value greater than $0.5$ or less than $-0.5$) signifies a stronger correlation \cite{prcc3}.
\vspace*{2mm}\ \\ \noindent The results for the sensitivity analysis, obtained using the baseline values and ranges of the parameters of the 2-group behavior model given in Table \ref{tab:cube}, are tabulated, for two snapshots of time during the first two waves of the pandemic in New York City (namely, towards the end of the first wave, April 20, 2020; and towards the end of the second wave, February 21, 2021), in Table \ref{tab:prcc}.
\begin{center}
    \begin{table}[H]
    \centering
\begin{tabular}{|p{2cm}||p{4.5cm}|p{3cm}|}
     \hline
     Parameter & Baseline value & Range \\
     \hline
     \hline
     $\theta_{l,1}$ & 0.74664 (dimensionless) & [0.448,1.045]   \\ \hline
     $\theta_{l,2}$ & 0.00133 (dimensionless) & [0.0008,0.0019]   \\ \hline
     $\theta_{l,3}$ & 0.15427 (dimensionless) & [0.0926,0.216]  \\ \hline
     $\theta_{l,4}$ & 0.30970 (dimensionless) & [0.186,0.434]  \\ \hline
     $c^B_{12}$ & 1/30 day$^{-1}$ & [0.02,0.0467] \\ \hline
     $c^B_{21}$ & 1/90 day$^{-1}$& [0.00667,0.0156]   \\ \hline
     $a_1$ & 8,000 (dimensionless)& [4,800,11,200]  \\ \hline
     $a_2$ & 2,800 (dimensionless)& [1,680,3920]  \\ \hline
     $\xi$ & 1/180 day$^{-1}$& [0.00333,0.00778]   \\ \hline     
     $\sigma_i$ & 1/14 day$^{-1}$& [0.0429,0.1]  \\ \hline
     $\sigma_e$ & 0.25 day$^{-1}$& [0.15,0.35]   \\ \hline
     $\gamma_a$ & 1/9 day$^{-1}$& [0.0667,0.1556]  \\ \hline
     $\gamma_h$ & 0.1 day$^{-1}$& [0.06,0.14]   \\ \hline
     $\delta_h$ & 0.41 day$^{-1}$& [0.246,0.574]  \\ \hline
     $\beta_a$ & 0.625 day$^{-1}$& [0.375,0.875]  \\ \hline
     $\beta_i$ & 0.375 day$^{-1}$& [0.225,0.525]   \\ \hline
     $r$ & 0.6 (dimensionless)& [0.36,0.84]   \\ \hline
     $q$ & 0.05 (dimensionless)& [0.03,0.07]  \\ \hline
    \end{tabular}\caption{Baseline values and ranges for the 18 parameters of the  2-group behavior model \eqref{eq:n2model}.  The baseline values are as given in  Tables \ref{tab:fixedparams} and \ref{tab:fittedparams_full}, and the range of each parameter is determined by taking 40\% to the left and right of its baseline value \cite{prcc1, prcc2, prcc3}.}\label{tab:cube}
    \end{table}
\end{center}\vspace{-20mm}
\begin{center}
    \begin{table}[H]
    \centering
\begin{tabular}{|p{2cm}||p{1.7cm}|p{1.6cm}|}
\multicolumn{3}{c}{\textbf{(a)}}\\
\multicolumn{3}{c}{ }\\
     \hline
     Parameter & Apr 2020 & Feb 2021  \\
     \hline
     \hline
     $\theta_{l,1}$ & 0.806 & 0.262  \\ \hline
     $\theta_{l,2}$ & 0.165 & 0.0277  \\ \hline
     $\theta_{l,3}$ & -0.0524 & 0.241  \\ \hline
     $\theta_{l,4}$ & 0.0670 & 0.744  \\ \hline
     $c^B_{12}$ & -0.261 & -0.228 \\ \hline
     $c^B_{21}$ & -0.00475 & -0.00195  \\ \hline
     $a_1$ & 0.0285 & 0.0147  \\ \hline
     $a_2$ & -0.185 & -0.152  \\ \hline
     $\xi$ & 0.188 & 0.0897  \\ \hline     
     $\sigma_i$ & -0.125 & -0.864  \\ \hline
     $\sigma_e$ & 0.786 & -0.0545  \\ \hline
     $\gamma_a$ & -0.448 & -0.711 \\ \hline
     $\gamma_h$ & -0.104 & -0.187  \\ \hline
     $\delta_h$ & -0.612 & -0.0580  \\ \hline
     $\beta_a$ & 0.849 & 0.561  \\ \hline
     $\beta_i$ & 0.842 & 0.535  \\ \hline
     $r$ & -0.130 & -0.290  \\ \hline
     $q$ & 0.576 & -0.041  \\ \hline
    \end{tabular} \hspace{6mm}
    \begin{tabular}{|p{2cm}||p{1.7cm}|p{1.6cm}|}
    \multicolumn{3}{c}{\textbf{(b)}}\\
    \multicolumn{3}{c}{ }\\
     \hline
     Parameter & Apr 2020 & Feb 2021  \\
     \hline
     \hline
     $\theta_{l,1}$ & 0.665 & 0.661  \\ \hline
     $\theta_{l,2}$ & -0.102 & 0.0156  \\ \hline
     $\theta_{l,3}$ & -0.0616 & 0.305  \\ \hline
     $\theta_{l,4}$ & 0.138 & 0.572  \\ \hline
     $c^B_{12}$ & 0.0266 & -0.152 \\ \hline
     $c^B_{21}$ & -0.0396 & -0.125  \\ \hline
     $a_1$ & -0.169 & -0.127  \\ \hline
     $a_2$ & -0.126 & -0.141  \\ \hline
     $\xi$ & -0.0535 & 0.0738  \\ \hline     
     $\sigma_i$ & 0.160 & -0.778  \\ \hline
     $\sigma_e$ & 0.796 & 0.470  \\ \hline
     $\gamma_a$ & -0.268 & -0.539 \\ \hline
     $\gamma_h$ & -0.167 & 0.127  \\ \hline
     $\delta_h$ & 0.518 & 0.436  \\ \hline
     $\beta_a$ & 0.772 & 0.606  \\ \hline
     $\beta_i$ & 0.785 & 0.691  \\ \hline
     $r$ & 0.343 & 0.0223  \\ \hline
     $q$ & 0.690 & 0.219  \\ \hline
    \end{tabular}\caption{Partial rank correlation coefficients (PRCCs) of the 18 parameters of the 2-group behavior model \eqref{eq:n2model} at two time periods (April 2020 and February 2021) during the SARS-CoV-2 pandemic in New York City, with respect to: (a) peak daily SARS-CoV-2 hospitalizations and (b) cumulative SARS-CoV-2 mortality. The baseline values and ranges of the parameters used in generating the PRCC values are given in Table \ref{tab:cube}.}\label{tab:prcc}
    \end{table}
\end{center} \vspace{-20mm}
\subsubsection{Sensitivity analysis with respect to the peak daily hospitalization}\label{sec:SA_peakhosp}
\noindent Table \ref{tab:prcc}(a) shows that the top-five parameters of the 2-group behavior model \eqref{eq:n2model} that have the most influence on peak daily hospitalizations during the first wave of the SARS-CoV-2 pandemic in New York City (April 20, 2020) are:
\begin{itemize} 
\item[(i)] The maximum effective contact rate for asymptomatic infectious individuals ($\beta_a$; PRCC $=$ +0.849). 
\item[(ii)] The maximum effective contact rate for symptomatic infectious individuals ($\beta_i$; PRCC = +0.842).
\item[(iii)] The parameter for the measure of effectiveness of government-mandated community lockdowns (and other NPIs) during Phase 1 of the pandemic in New York City ($\theta_{l,1}$; PRCC = +0.806). 
\item[(iv)] The rate of progression of individuals leaving the exposed class ($\sigma_e$; PRCC = +0.786).
\item[(v)] The proportion of symptomatic individuals that are hospitalized ($q$; PRCC = +0.576).
\end{itemize}
\noindent Thus, peak daily hospitalizations in New York City during the first wave of the pandemic can be significantly reduced by implementing intervention and mitigation strategies that decrease maximum effective contact rates ($\beta_a$ and $\beta_i$) and the proportion of symptomatic individuals that are hospitalized ($q$), and increase the incubation period ($1/\sigma_e$) and the efficacy of Phase 1 government-mandated community lockdowns and NPIs ($1-\theta_{l,1}$). The effective contact rate parameters ($\beta_a$ and $\beta_i$) can be decreased by, for instance, reducing contacts, large scale random testing, quarantine/isolation, and the use of face masks in public. Similarly, $\theta_{l,1}$ can be increased by increasing the coverage and effectiveness of the NPIs implemented. The parameter $\sigma_e$ can be reduced by any biological mechanism that extends the incubation or latency period of the disease.  Finally, the parameter $q$ can be decreased by early detection and effective treatment of symptomatic individuals.

\noindent Table \ref{tab:prcc}(a) also shows that the top-five parameters that have the most influence on this response metric during the second wave (February 21, 2021) are: 
\begin{itemize} 
\item[(i)] The rate of progression of individuals leaving the symptomatic infectious class ($\sigma_i$; PRCC = -0.864).
\item[(ii)] The parameter for the measure of effectiveness of the government-mandated community lockdowns (and
other NPIs) during Phase 4 of the pandemic in New York City
($\theta_{l,4}$; PRCC = +0.744).
\item[(iii)] The rate at which asymptomatically infectious individuals recover ($\gamma_a$; PRCC = -0.711).
\item[(iv)] The maximum effective contact rate for asymptomatic infectious individuals ($\beta_a$; PRCC = +0.561).
\item[(v)] The maximum effective contact rate for symptomatic infectious individuals ($\beta_i$; PRCC = +0.535).
\end{itemize}
\noindent Thus, two of the top-five parameters that affect peak daily hospitalizations during the first wave ($\beta_a$ and $\beta_i$) also remain relevant during the second wave, while the parameters $\theta_{l,1}$ (for the measure of effectiveness of the government-mandated community lockdowns in Phase 1), $\sigma_e$ (for the rate at which individuals leave the exposed class), and $q$ (for the proportion of symptomatic individuals that are hospitalized) diminish in relevance with respect to peak daily hospitalizations during the second wave. Unlike during the first wave, three additional parameters (namely, $\sigma_i$, for the rate of progression of individuals leaving the symptomatic infectious class; $\theta_{l,4}$, for the measure of effectiveness of the government-mandated community lockdowns in Phase 4; and $\gamma_a$, for the rate at which asymptomatically infectious individuals recover), become very relevant during the second wave (where their PRCC values increased in comparison to their values during the first wave by about 7-fold, 11- fold, and 1.6-fold, respectively). In other words, this study shows that, during the second wave of the SARS-CoV-2 pandemic in New York City, the peak daily hospitalizations are significantly decreased with increasing values of $\sigma_i$ (the rate at which symptomatically infectious individuals leave the symptomatic class), $1-\theta_{l,4}$ (the efficacy of Phase 4 government-mandated community lockdowns and NPIs) and $\gamma_a$ (the rate at which asymptomatically infectious individuals recover). Thus, peak daily hospitalizations in New York City during the second wave of the pandemic can be significantly reduced using the approaches discussed above, as well as by implementing intervention and mitigation strategies that minimize the amount of time the average individual spends in the symptomatic class ($1/\sigma_i$). This could be achieved by effective treatment of symptomatic individuals.
\subsubsection{Sensitivity analysis with respect to cumulative mortality}
\noindent It can be seen from Table \ref{tab:prcc}(b) that the top-five parameters of the behavior model \eqref{eq:n2model} that have the most influence on cumulative mortality during the first wave of the SARS-CoV-2 pandemic in New York City  (April 20, 2020) are:
\begin{itemize} 
\item[(i)] The rate of progression of individuals leaving the exposed class ($\sigma_e$; PRCC = +0.796).
\item[(ii)] The maximum effective contact rate for symptomatic infectious individuals ($\beta_i$; PRCC = +0.785).
\item[(iii)] The maximum effective contact rate for asymptomatic infectious individuals ($\beta_a$; PRCC = +0.772).
\item[(iv)] The proportion of symptomatic individuals that are hospitalized ($q$; PRCC = +0.690).
\item[(v)] 
The parameter for the measure of effectiveness of government-mandated community lockdowns (and
other NPIs) during Phase 1 of the pandemic in New York City
($\theta_{l,1}$; PRCC = +0.665).
\end{itemize}
As these are the same parameters as listed in Section \ref{sec:SA_peakhosp}, for influence on peak daily hospitalizations at the end of the first wave of the SARS-CoV-2 pandemic in New York City, similar approaches as listed there would lead to a decrease in first wave mortality. \\\vspace{-4mm}

\noindent Table \ref{tab:prcc}(b) also shows that the top-five parameters that have the most influence on this response metric during the second wave (February 21, 2021) are:  
\begin{itemize} 
\item[(i)] The rate of progression of individuals leaving the symptomatic infectious class ($\sigma_i$; PRCC = -0.778).
\item[(ii)] The maximum effective contact rate for symptomatic infectious individuals ($\beta_i$; PRCC = +0.691).
\item[(iii)] 
The parameter for the measure of effectiveness of government-mandated community lockdowns (and
other NPIs) during Phase 1 of the pandemic in New York City
($\theta_{l,1}$; PRCC = +0.661).
\item[(iv)] The maximum effective contact rate for asymptomatic infectious individuals ($\beta_a$; PRCC = +0.606).
\item[(v)] 
The parameter for the measure of effectiveness of government-mandated community lockdowns (and
other NPIs) during Phase 4 of the pandemic in New York City
($\theta_{l,4}$; PRCC = +0.572).
\end{itemize}
Thus, cumulative mortality in New York City during the second wave of the SARS-CoV-2 pandemic can be significantly reduced using the strategies mentioned in Section \ref{sec:SA_peakhosp}, as well as by implementing strategies to increase the efficacy of Phase 4 government-mandated community lockdown (and other non-pharmaceutical intervention measures), given by ($1-\theta_{l,4}$). This could be achieved through a slower reopening process, in tandem with public health campaigning to combat mask fatigue.\\\\
\noindent Bar graphs of the PRCC values of the 2-group behavior model \eqref{eq:n2model}, generated for the two  disease metrics discussed above, are depicted in Figures \ref{fig:prcc_hosp} and \ref{fig:prcc_deaths} of Appendix C.

\subsection{Numerical simulations of the 2-group behavior model \eqref{eq:n2model}}\label{sec:numsim}
\noindent In this section, the population-level impact of the four behavioral parameters ($a_1$ and $a_2$, which provide a measure of the sensitivity individuals in groups 1 and 2, respectively, show in response to hospitalization levels in the community; and $c^B_{12}$ and $c^B_{21}$, which provide a measure of the influence Group 1 holds over Group 2, and Group 2 holds over Group 1, respectively) on the trajectory and burden of the SARS-CoV-2 pandemic in New York City during the first two waves will be assessed. Such simulations will provide a scientific basis for determining which of the factors that induce behavioral change (namely, disease-motivated, as measured by $a_1$ and $a_2$; and peer influence-motivated, as measured by $c^B_{12}$ and $c^B_{21}$) is more influential in affecting the trajectory and burden of the disease during the two waves. Determining the behavioral change factors that have the most influence on disease trajectory and burden will be critical in shaping public health policy and messaging, as identified factors may be targeted for public health intervention (for instance, if the hospitalization-induced behavioral change factor is determined to be more influential than the peer influence-induced behavioral change factor, then public health messaging resources should be prioritized for making hospitalization-related information widely available to the public). The objective of this section is achieved by simulating the 2-group behavior model \eqref{eq:n2model} with the fixed and estimated parameters in Tables \ref{tab:fixedparams} and \ref{tab:fittedparams_full}, respectively, for various values of the four behavioral change parameters as described below.


\subsubsection{Impact of hospitalization-induced behavior change parameters ($a_1$ and $a_2$)}\label{sec:ai_impact}
\noindent The simulation results obtained from running the 2-group behavior model \eqref{eq:n2model} with various values of the hospitalization-induced behavior change parameters $a_1$ and $a_2$ (using the parameters in Tables \ref{tab:fixedparams} and \ref{tab:fittedparams_full}), depicted in Figure \ref{fig:ai_impact}, show that the peak daily hospitalizations during the first wave of the SARS-CoV-2 pandemic in New York City dramatically decrease with increasing values of $a_2$ (see Figure \ref{fig:ai_impact}(a)). This figure also shows that changes in $a_1$ have little to no effect on the peak daily hospitalizations during the first wave. The dominance of the contact rate modifier for Group 2 ($a_2$) over that of Group 1 ($a_1)$, on decreasing the peak daily hospitalizations during the first wave, is likely due to the fact that the size of Group 2 was larger than that of Group 1 during the first wave (i.e., $N_2(t)>N_1(t)$ for all $t\ge0$ during the first wave). For example, this figure (\ref{fig:ai_impact}(a)) shows that a 25\% reduction in the peak daily hospitalizations during the first wave can be attained if the baseline value of $a_2$ is increased by 15\% (i.e., $a_2$ is increased from the baseline value of 2,800 to 3,200). A similar result was obtained for the peak daily hospitalizations during the second wave (Figure \ref{fig:ai_impact}(b)). Furthermore, this figure shows that, despite the decrease in the relative size of Group 2 ($\frac{N_2(t)}{N(t)}$) between the first two waves of the pandemic in New York City (see Figure \ref{fig:pop_dynam}), the contact rate modifier for Group 2 ($a_2$) still has a significant effect on the peak daily hospitalizations during the second wave. This result is in line with the sensitivity analysis conducted in Section \ref{sec:SA_peakhosp}, where the PRCC value of $a_2$ with respect to the peak daily hospitalizations increased (in magnitude) from -0.185 in April 2020 (i.e., during the first wave) to -0.443 in February 2021 (i.e., during the second wave). However, Figure \ref{fig:ai_impact}(b) also shows that behavior change by individuals in Group 1 has a more pronounced impact on reducing the peak daily hospitalizations during the second wave, in comparison to its impact during the first wave (compare Figures \ref{fig:ai_impact}(a) and (b)). \\
\indent Simulations for the effect of the hospitalization-induced behavior change parameters ($a_1$ and $a_2$) on the cumulative SARS-CoV-2 mortality during the first wave (depicted in Figure \ref{fig:ai_impact}c) show a similar pattern to their effect on the peak daily hospitalizations during the first wave (depicted in Figure \ref{fig:ai_impact}a). For the cumulative mortality metric, Figure \ref{fig:ai_impact}c further shows that achieving a 25\% reduction in cumulative SARS-CoV-2 mortality during the first wave would require at least a 25\% increase (from baseline value of 2,800 to 3,500) in the value of the contact rate modifier for Group 2, $a_2$ (as against the 15\% increase in the baseline value of the same parameter to achieve the 25\% reduction in peak daily hospitalizations for the scenario depicted in Figure \ref{fig:ai_impact}(a)). Thus, these simulations show that, during the first wave of the pandemic, a more pronounced positive behavior change in Group 2 (i.e., increase in the value of $a_2$) will be needed to significantly minimize the cumulative SARS-CoV-2 mortality in New York City during this wave. However, during the second wave, a significantly lower threshold value of $a_2$ exist ($a_2\approx 2,200$), above which the aforementioned significant reduction in the cumulative mortality can be achieved (Figure \ref{fig:ai_impact}d).  Furthermore, there exists a region in the $a_1-a_2$ plane ($4,100\leq a_1\leq 8,000, 2,200\leq a_2\leq 4,000$, see red box in Figure \ref{fig:ai_impact}d) within which an increase in $a_1$ (i.e., positive behavior within Group 1 induced by the level of SARS-CoV-2 hospitalizations) leads to an increase in the cumulative SARS-CoV-2 mortality. This  may be due to the fact that, in this region in the $a_1-a_2$ plane (unlike in all other regions in Figure \ref{fig:ai_impact}d), an increase in $a_1$ prolongs the duration of the second wave (see Figure \ref{fig:ai_w2len} in Appendix C). 
\begin{figure}
    \centering
    \includegraphics[width=175mm]{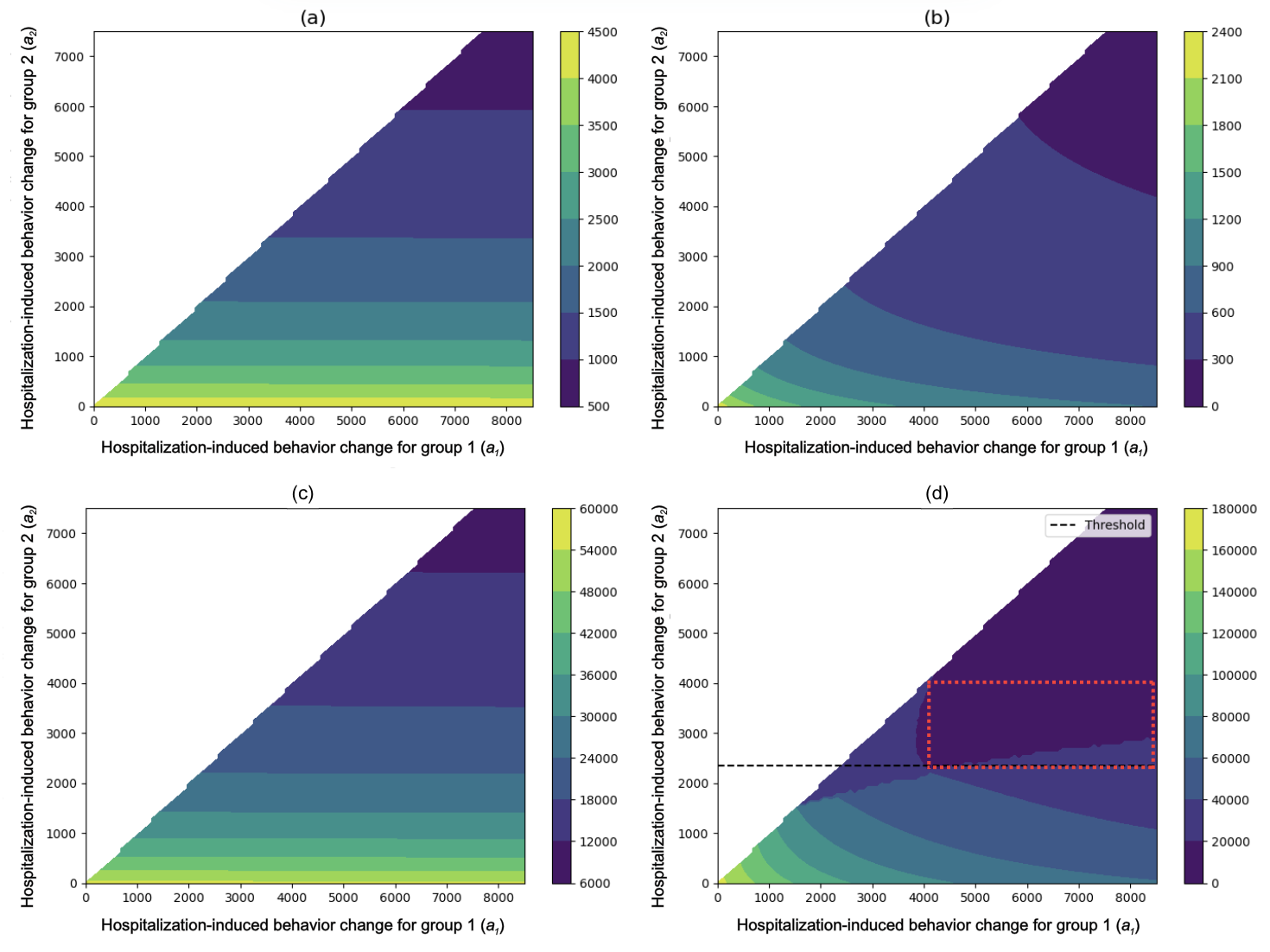}
    \caption{Effect of the hospitalizations-induced behavior change parameters, $a_1$ and $a_2$, on the peak daily hospitalizations and cumulative mortality in New York City during the first two waves of the SARS-CoV-2 pandemic.  Simulations of the behavior model \eqref{eq:n2model} with the baseline parameter values in Table \ref{tab:fixedparams} and \ref{tab:fittedparams_full}, and various values of $a_1$ and $a_2$.
    (a) Effect of $a_1$ and $a_2$ on the peak daily hospitalizations during the first wave.  (b) Effect of $a_1$ and $a_2$ on the peak daily hospitalizations during the second wave. (c) Effect of $a_1$ and $a_2$ on the cumulative mortality during the first wave. (d) Effect of $a_1$ and $a_2$ on the cumulative mortality during the second wave.}
    \label{fig:ai_impact}
\end{figure} 
\subsubsection{Impact of peer influence-induced behavioral change parameters ($c^B_{12}$ and $c^B_{21}$)}\label{sec:cij_impact} 
The simulation results obtained from running the 2-group behavior model \eqref{eq:n2model} with various values of the influence-related behavior change parameters, $c^B_{12}$ and $c^B_{21}$ (using the parameters in Tables \ref{tab:fixedparams} and \ref{tab:fittedparams_full}), are depicted in Figure \ref{fig:cij_impact}. In particular, Figure \ref{fig:cij_impact}(a) shows that the peak daily hospitalizations during the first wave of the SARS-CoV-2 pandemic in New York City are far more impacted by the behavior of Group 2 (measured in terms of the average time it takes an individual in Group 2 to change their behavior and move to Group 1, as a result of interaction with individuals in Group 1, given by $1/c^B_{12}$) than it is by the behavior of Group 1 (measured in terms of the average time it takes an individual in Group 1 to change their behavior and move to Group 2, as a result of interaction with individuals in Group 2, given by $1/c^B_{21}$).  This is in line with what was observed in the corresponding simulations for hospitalization-induced behavioral changes (as shown in Figure \ref{fig:ai_impact}(a)). Furthermore, this figure shows that, if the average duration for individuals in Group 2 to move to Group 1 is not quite short (i.e., $1/c^B_{12}<5$ days), then the value of peak daily hospitalizations is essentially fixed at 1,800 hospitalized individuals at the height of the first wave of the pandemic. It is worth noting from Figure \ref{fig:cij_impact}(a) that, even in the case where the change of behavior from Group 2 to Group 1 is almost instantaneous ($1/c^B_{12}\leq1 $ day), peak daily hospitalizations cannot be reduced above 50\% of the aforementioned baseline value of 1,800. Thus, by comparing Figures \ref{fig:ai_impact}(a) and \ref{fig:cij_impact}(a), it is clear that a greater reduction in the number of peak daily hospitalizations is achieved by increases in the values of the hospitalization-induced behavioral change parameters ($a_1$, $a_2$) than by increases in the values of the influence-motivated behavioral change parameters ($c^B_{12}$ and $c^B_{21}$). In other words, control strategies that focus on increasing positive behavior change with respect to hospitalization-motivated behavioral changes is more effective (in reducing the peak daily hospitalizations) than strategies aimed at increasing positive change with respect to influence-motivated behavioral changes during the first wave of the pandemic in New York City. During the second wave of the pandemic, the simulation results show that greater reduction in the peak daily hospitalizations (compared to during the first wave) can be achieved even if it takes, on average, up to 20 days for individuals in Group 2 to change their behavior and move to Group 1 (see Figure \ref{fig:cij_impact}(b)). In other words, peer influence has far more effect on the peak daily hospitalizations during the second wave than during the first wave (compare Figures \ref{fig:cij_impact}(a) and (b)).\\
\noindent The simulations further show that influence rates $c^B_{12}$ and $c^B_{21}$ have only marginal effect on the cumulative mortality during the first wave of the pandemic in New York City, with only the most optimistic scenario ($1/c^B_{12}\approx0$) resulting in a maximum of 50\% reduction in the cumulative mortality (see Figure \ref{fig:cij_impact}(c)). While this reduction is sizable, it is unrealistic to assume that most individuals would be capable of changing their attitude (and positively altering their health behaviors) so quickly. Similarly, as seen in the case of the peak daily hospitalizations metric, the size of the SARS-CoV-2 cumulative mortality in New York City is more affected by changes in the peer influence parameters, $c^B_{12}$ and $c^B_{21}$, during the second wave (compare Figures \ref{fig:cij_impact}(c) and (d)). Lastly, similar to what was observed in the Section \ref{sec:ai_impact}, Figure \ref{fig:cij_impact}(d) reveals a region in the $c^B_{12}-c^B_{21}$ plane wherein an increase in $c^B_{12}$ leads to an increase in cumulative mortality. This, once again, is likely due to an increase in the length (in days) of the second wave under these circumstances.
\begin{figure}
    \centering
    \includegraphics[width=1\linewidth]{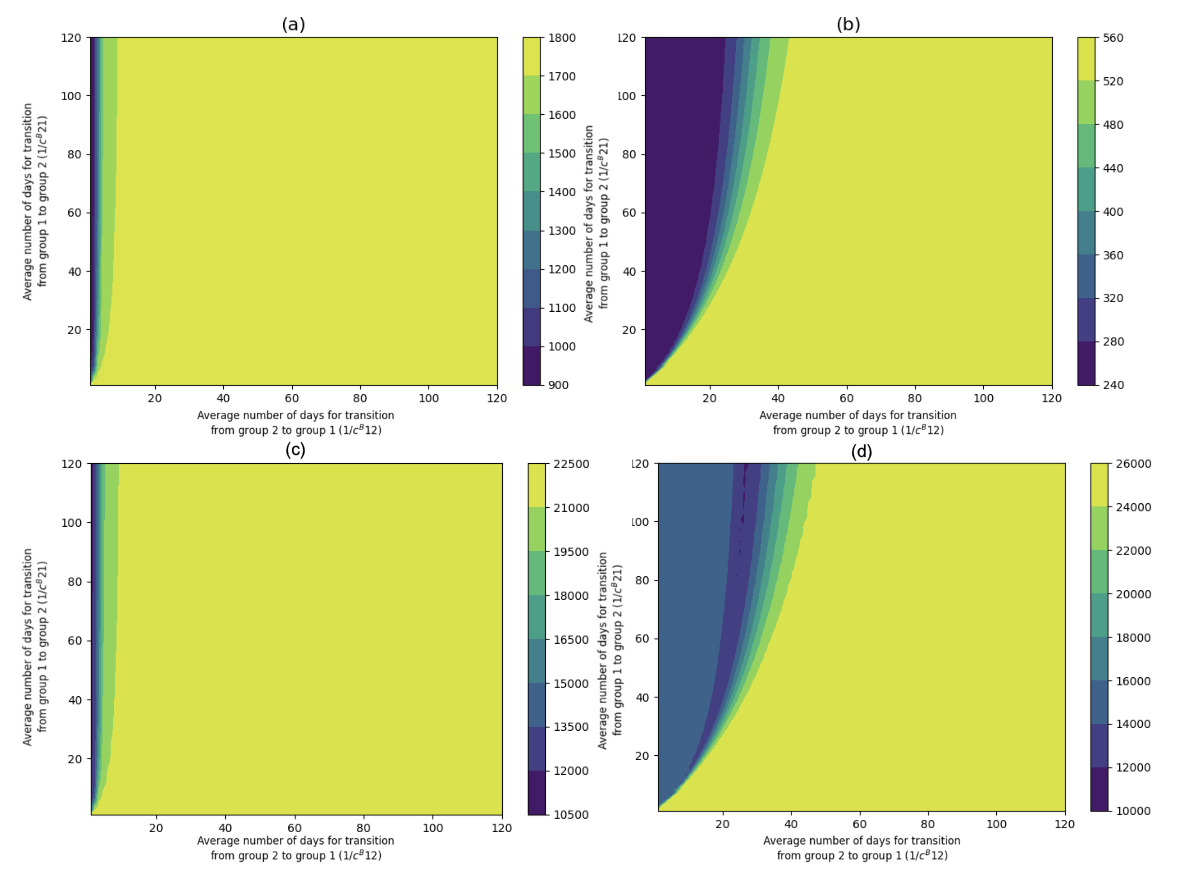}
    \caption{Effect of the peer influence-induced behavior change parameters, $c^B_{12}$ and $c^B_{21}$, on the peak daily hospitalizations and cumulative mortality in New York City during the first two waves of the SARS-CoV-2 pandemic.  Simulations of the 2-group behavior model \eqref{eq:n2model} with the baseline parameter values in Table \ref{tab:fixedparams} and \ref{tab:fittedparams_full}, and various values of $c^B_{12}$ and $c^B_{21}$.
    (a) Effect of $c^B_{12}$ and $c^B_{21}$ on the peak daily hospitalizations during the first wave.  (b) Effect of $c^B_{12}$ and $c^B_{21}$ on the peak daily hospitalizations during the second wave. (c) Effect of $c^B_{12}$ and $c^B_{21}$ on the cumulative mortality during the first wave. (d) Effect of $c^B_{12}$ and $c^B_{21}$ on the cumulative mortality during the second wave.}
    \label{fig:cij_impact}
\end{figure}
\subsubsection{Impact of relative behavioral group size at onset of the pandemic }\label{sec:impact_of_ics}
\noindent  The calibration and simulations of the 2-group behavior model \eqref{eq:n2model} (carried out in Sections \ref{sec:fitting}-\ref{sec:cij_impact}) were for the case where all individuals in the community started out in the risk-tolerant Group 2 before the onset of the pandemic. In this section, the model \eqref{eq:n2model} will be simulated for scenarios where this assumption is relaxed. This (relaxation of the assumption) allows for the assessment of the (potentially more realistic) situation where certain members of the community are predisposed to be cautious during the very beginning of public health emergencies (this seemed to have been the case in New York City during the SARS-CoV-2 pandemic \cite{maskhoarding}). Specifically, the case where a proportion, $k\in[0,1]$, of susceptible members of the community start out in Group 1 (i.e., $S_1(0)\approx kN(0)$, where $N(0)$ is the total initial population size) and the remaining susceptible proportion start out in Group 2 (i.e., $S_2(0)\approx (1-k)N(0)$) is now simulated for various values of $0\leq k\leq1$ (with the same parameter values in Tables \ref{tab:fixedparams} and \ref{tab:fittedparams_full}). The simulation results obtained show a marked decrease in daily hospitalizations ($I_h(t)$) with increasing proportion of members of the community ($k$) who started out in Group 1 (this reduction is particularly pronounced during the respective daily hospitalization peaks of the first and second waves). For example, a 23\% reduction of peak daily hospitalizations in the first wave can be achieved if 20\% of the susceptible members of the community started out in Group 1, in relation to the baseline scenario where every susceptible member of the community started out in Group 2 (compare red and orange curves in Figure \ref{fig:ic_impact}(a), and see also Table \ref{tab:ic_impact_bywave}). Up to 55\% reduction in the peak daily hospitalizations in the first wave can be achieved if everyone started out in Group 1 (see blue curve in Figure \ref{fig:ic_impact} and Table \ref{tab:ic_impact_bywave}). Although an increase in $k$ reduces the peak daily hospitalizations in the second wave (with an increase in $k$ from 0 to 0.2 leading to a 33\% reduction in the peak daily hospitalizations), this reduction diminishes for $k\geq 0.2$, with $k=0.2$ and $k=1$ producing nearly identical values for the peak daily hospitalizations during the second wave (see purple shaded region in Figure \ref{fig:ic_impact}(a)). This is likely the result of similar population makeups, meaning that as long as enough individuals started out cautious at the beginning of the first wave (i.e., $k>=0.2$), then by the start of the second wave the population would be mostly made up of cautious individuals ($\frac{N_1(t)}{N(t)}\approx 1$ for $t>180$, see Figure \ref{fig:n1_k} in Appendix C). Similar results were obtained with respect to the cumulative SARS-CoV-2 mortality metric, as depicted in Figure \ref{fig:ic_impact}(b). This suggests that initial attitudes towards the pandemic (specifically, in the few days after the disease arrives in the community) and the government agencies responsible for public health messaging and NPI implementation have a large impact on disease dynamics. Hence the early implementation of effective public health messaging that encourages a sizable proportion of the susceptible population to adhere to NPIs will significantly reduce the burden of the pandemic. \vspace{-5mm}
\begin{figure}[H]
    \centering
\hspace{3em}\textbf{(a)}\hspace{18em}\textbf{(b)}\\ \includegraphics[width=160mm,scale=1]{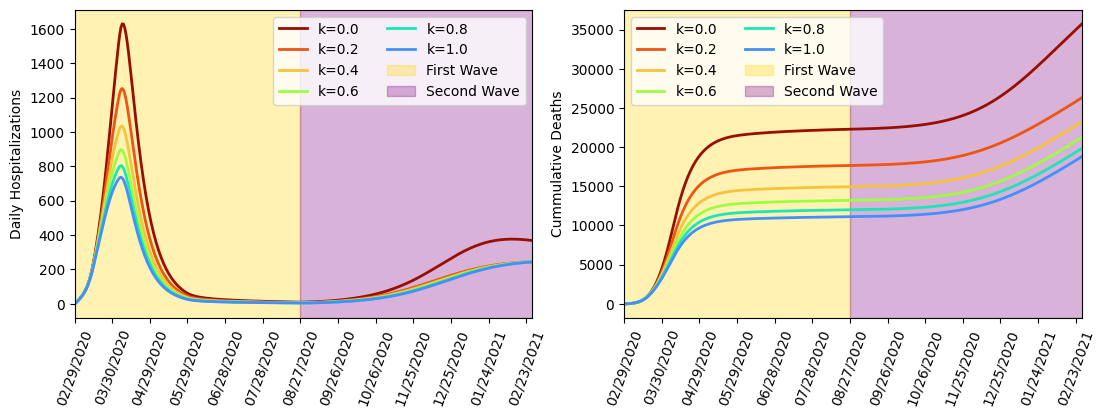}\\
    \caption{Effect of initial sizes of the two behavioral groups on daily hospitalizations ($I_h(t)$) and cumulative mortality during the first two waves of the SARS-CoV-2 pandemic in New York City. Simulations of the 2-group behavior model \eqref{eq:n2model} using the parameter values in Tables \ref{tab:fixedparams} and \ref{tab:fittedparams_full} and various initial sizes of the behavioral groups ($S_1(0)=kN(0)$ and $S_2(0)=(1-k)N(0)$, for $k=0,0.2,0.4,...,1$).  (a) Daily SARS-CoV-2 hospitalizations, as a function of time during the first two waves ($I_h(t)$). (b) Cumulative SARS-CoV-2 mortality, as a function of time during the first two waves in New York City.
    } 
    \label{fig:ic_impact}
\end{figure}\vspace{-5mm}
\begin{center}
\begin{table}[H]
    \centering
\begin{tabular}{|p{2.15cm}||p{2.73cm}|p{2.5cm}||p{2.73cm}|p{2.5cm}|}
     \hline
     Proportion & \multicolumn{2}{c||}{First Wave} & \multicolumn{2}{c|}{Second Wave} \\ \cline{2-5}
     initially in Group 1 ($k$) & Peak Daily Hospitalizations & Cumulative Mortality & Peak Daily Hospitalizations & Cumulative Mortality \\
     \hline
     \hline
     0 & 1,630 & 17,914 & 376 & 25,993 \\ \hline
     0.2 & 1,253 & 13,959 & 250 & 15,720 \\ \hline
     0.4 & 1,034 & 11,750 & 250 & 15,180 \\ \hline
     0.6 & 898 & 10,465 & 251 & 14,670 \\ \hline
     0.8 & 804 & 9,565 & 251 & 14,265 \\ \hline
     1 & 736 & 8,756 & 251 & 14,135 \\ \hline
    \end{tabular}
    \vspace{0.5cm} \caption{SARS-CoV-2 related peak daily  hospitalizations and cumulative mortality in New York City at the end of the first wave of the pandemic and after the second wave of the pandemic, generated by simulating the 2-group behavior model \eqref{eq:n2model} using the values of the fixed and estimated parameters given in Tables \ref{tab:fixedparams} and \ref{tab:fittedparams_full}, respectively. Initial conditions used in the simulations are given by $S_1(0)\approx kN(0)$ and $S_2(0) \approx (1-k)N(0)$ for various values of $k \in [0,1]$.}
       \label{tab:ic_impact_bywave}
\end{table}
\end{center}
\subsubsection{Impact of early implementation and efficacy of lockdown and other NPIs }\label{sec:impact_of_lockdown}
\begin{figure}[H]
    \centering
    \includegraphics[width=0.55\linewidth]{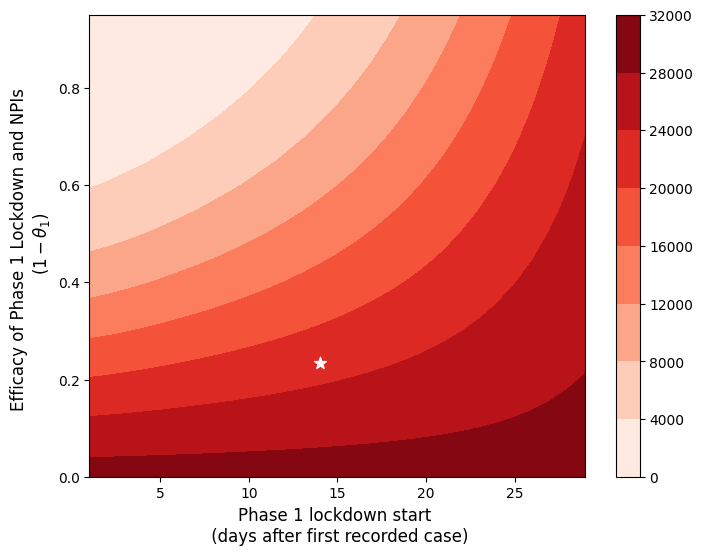}
    \caption{Heat map for assessing the effect of early implementation and efficacy of lockdown and other NPI measures $(1-\theta_{l,1})$ on the cumulative mortality during the first wave of the SARS-CoV-2 pandemic in New York City. Simulations of the 2-group behavior model \eqref{eq:n2model} with the baseline values of the fixed and estimated parameters of the model given in Tables \ref{tab:fixedparams} and \ref{tab:fittedparams_full}, respectively, and various values of $\theta_{l,1}$ and start dates for the implementation of the Phase 1 lockdown and other NPI measures after the index case of the SARS-CoV-2 pandemic in New York City. The white star marks the coordinates of the fitted value of $\theta_{l,1}=0.74664$ (estimated in Section \ref{sec:paramsfitted}) and the reported start date of lockdown in New York City ($=14$ days \cite{314_1}).}
    \label{fig:p1vt1}
\end{figure} \vspace*{-5mm}
\noindent 
In the previous sections discussed thus far, simulations of the model were carried out under the assumptions that Phase 1 of lockdown began in New York City 14 days after the index case of the SARS-CoV-2 pandemic was reported in the city, and that the efficacy of lockdown and other NPIs during Phase 1 was somewhat low (approximately 25\%, as estimated in Section \ref{sec:paramsfitted}). It is instructive, therefore, to assess the population-level impact of varying the initial start date (beginning of Phase 1) and efficacy of the lockdown and other NPI measures (as measured by $1-\theta_{l,1}$) on the cumulative SARS-CoV-2 mortality at the end of the first wave in New York City. To achieve this objective, the 2-group behavior model \eqref{eq:n2model} was simulated using the parameter values in Tables \ref{tab:fixedparams} and \ref{tab:fittedparams_full} and varying values of $1-\theta_{l,1}$ and the Phase 1 lockdown start date. The results obtained, depicted by the heat map in Figure \ref{fig:p1vt1}, show a dramatic reduction in the cumulative SARS-CoV-2 mortality during the first wave with increasing efficacy of the intervention measures ($1-\theta_{l,1}$) even if the start date of the implementation of the intervention measures was up to the 14-day baseline.  For instance, a 36\% reduction (from the baseline of 22,000 to 14,000) in the cumulative SARS-CoV-2 mortality can be recorded during the first wave if lockdown measures were implemented at the 14-day baseline period after the index case and the efficacy of this measure is 50\% (i.e., $1-\theta_{l,1}$=$0.5$).
Furthermore, this heat map shows that if the lockdown and other NPI interventions are implemented earlier (e.g., five days after the index case), the SARS-CoV-2 pandemic would have been greatly suppressed in New York City if the efficacy of these interventions is at least 60\% (see the top- left corner of the heat map). This figure further shows a sizable increase in the first wave cumulative mortality with increasing delay in the implementation of the intervention measures from the index case. If such delay is large enough, even high efficacy of these interventions only lead to a marginal reduction in cumulative mortality. For example, a 3-week delay in the implementation of these measures (after the index case) and 50\% efficacy will result in first wave cumulative mortality of about 20,000 (which represents only about 9\% reduction from the baseline value of 22,000). Furthermore, if the implementation is delayed by a month from the index case, even with the efficacy of these interventions as high as 90\%, up to 24,000 SARS-CoV-2 deaths will be recorded by the end of the first wave in New York City (this represents a 10\% increase in the baseline cumulative mortality recorded). Overall, this figure emphasizes the importance of early implementation of lockdown and other NPI measures, at moderate to high efficacy levels, in successfully curtailing the burden of the SARS-CoV-2 pandemic in New York City during the first wave.  In fact, no significant disease-induced mortality would have been recorded during the first wave of the SARS-CoV-2 pandemic if the lockdown and other NPI measures were implemented early (such as less than a week after the index case) and their efficacy is moderate or high enough (such as 60\%)). 

\section{Discussion and conclusion}\label{discussion}
\noindent One of the key lessons learned during the 2019 SARS-CoV-2 pandemic (also known as COVID-19, which accounted for over 700 million cases and seven million deaths worldwide \cite{WHOdashboard}) was the impact of heterogeneity in changes in human behavior or attitude towards the disease.  Some of the notable heterogeneities in human behavior observed during the course of the pandemic include differences in attitude (i.e., acceptance, hesitancy, or refusal) towards the public health intervention and mitigation measures implemented in their community or jurisdiction (e.g., social distancing, quarantine, isolation, wearing a face mask in public, testing, and vaccination), acceptance and/or refusal of public health messaging and/or mis(dis)information and heterogeneities based on age, gender, race and political affiliation (or polarization) \cite{altruism_bir,nycfatigue,gg_sociopolitcal_stoler,gg_policy_lim}. This study is based on using mathematical modeling approaches, backed by data analytics and computation, to realistically quantify the impact of the aforementioned human behavioral changes and heterogeneities on the spread and control of the SARS-CoV-2 pandemic in a population during the early stages of the pandemic.  In particular, the study assessed the impacts of these heterogeneities during the first two waves of the pandemic in New York City (a one-time global epicenter of the pandemic, and the jurisdiction that suffered the most SARS-CoV-2 burden in the United States).
\vspace*{1mm}\ \\ \indent 
The objective of this study was achieved {\it via} the design, analysis, parameterization/validation, and simulations of a novel mechanistic mathematical model which explicitly accounts for changes in human behavior during the pandemic by stratifying the total population into $n$ \textit{behavioral groups} based on their perception of risk (of acquisition/transmission of SARS-CoV-2 infection) and their resulting behavior choices with respect to the risk. Specifically, the first group (referred to as Group 1) represents individuals in the community who consistently adopt the most risk-averse behaviors against the acquisition or transmission, while Groups 2 through $n$ adopt an increasing risk-taking behaviors as $n$ increases (i.e., Group $n$ is the most risk-tolerant behavior group in the community).  The resulting behavior-epidemiology epidemic model, which takes the form of an $n-$group deterministic system of nonlinear differential equations, was rigorously analyzed for its basic qualitative properties (such as the invariance and non-negativity of its solutions).
\vspace*{1mm}\ \\ \indent 
A special case of the $n-$group model with two behavioral groups (i.e., $n=2$), with  one group for the most risk-averse and the other for the most risk-tolerant members of the community, was considered for mathematical tractability. The resulting 2-group behavior model was shown to have  three non-trivial disease-free equilibria,  namely, a \textit{cautious-only} (risk-averse) disease-free equilibrium (denoted by \textit{G1DFE}), a \textit{cautious-free} (risk-tolerant) disease-free equilibrium (denoted by \textit{G2DFE}), and a \textit{coexistence} (where both risk-averse and risk-tolerant groups co-exist) disease-free equilibrium (denoted by \textit{G3DFE}). These equilibria are shown to be stable whenever a certain epidemiological threshold, known as the \textit{control reproduction number} (denoted by $\RR_c$), is less than unity and the influence ratio (denoted by $\Gamma$) is greater, less than, or equal to 1, respectively.  The epidemiological implication of this result is that, regardless of the value of the influence parameter $\Gamma$, no major outbreaks of the SARS-CoV-2 pandemic can occur in the community if the intervention and mitigation strategies implemented in the community are able to bring, and maintain, the control reproduction number to a value below unity. From a public health standpoint, the cautious-only equilibrium (G1DFE) is the most desirable as it is associated with lower disease burden (as measured in terms of peak daily hospitalizations and cumulative mortality; see Table \ref{tab:ic_impact_bywave} and Figure \ref{fig:ic_impact}). It was further shown that, for the desirable G1DFE to be stable, it is necessary that $\Gamma=c^B_{12}/c^B_{21}$ must exceed unity (i.e., the risk-averse population must exert much higher level of influence on the non-cautious population than the other way round). While other studies have shown complementary results (for instance, Bir et al. \cite{altruism_bir} shows that social influence in the form of the promotion of altruism is correlated with a higher likelihood of cautious behavior adoption, and Espinoza et al. \cite{gg_heterogeneous_espinoza} shows that the behavioral makeup of the population has a large impact on final epidemic size), the current study is among the first to highlight and rigorously show the major role social (peer) influence plays on the dynamics of the SARS-CoV-2 pandemic, as measured in terms of helping to make the epidemiologically-desirable disease-free equilibrium (G1DFE) to be stable. \vspace*{1mm}\\
\indent The 2-group behavior model was calibrated using a 7-day running average of daily hospitalizations data  \cite{nycgov_data} for the first wave of the SARS-CoV-2 pandemic in New York City (this corresponds to the time period from February 29th, 2020 to July 28th, 2020 \cite{CDC_NYCstats}). The same hospitalizations dataset, for the period from July 29th, 2020 to Mar/ch 25th, 2021, was used to cross-validate the calibrated 2-group behavior model, and the resulting fitted and cross-validated 2-group behavior model was used to estimate the eight unknown parameters of this model (and their associated 95\% confidence intervals were also given) and make predictions for the second wave of the SARS-CoV-2 pandemic in New York City. It was shown that, while both the 2-group behavior model and its behavior-free equivalent (obtained by setting the behavior-related parameters of the 2-group behavior model to zero) fitted the first wave of the SARS-CoV-2 pandemic in New York City reasonably well, the behavior-free model failed to accurately predict the second wave, unlike the 2-group behavior model which did so almost perfectly (see Figures \ref{fig:predict_full} and \ref{fig:predict_bf}). This suggests that epidemic models of the SARS-CoV-2 pandemic that do not explicitly account for heterogeneous human behavior may fail to accurately predict the trajectory and burden of the pandemic in a population. This result complements the findings in \cite{ge_behavior_binod, gg_utility_marathe}.\vspace*{1mm} \\
\indent Detailed global sensitivity analysis, in the form of Partial Rank Correlation Coefficients (PRCCs) using Latin Hypercube Sampling \cite{LHS_blower}, was carried out for the 2-group behavior model to determine which of its 18 parameters had the largest impact on the two chosen response functions (namely, the peak daily SARS-CoV-2 hospitalizations and the cumulative mortality, corresponding to both the first and second waves of the SARS-CoV-2 pandemic in New York City). This analysis, carried out for two snapshots of time, namely April 20, 2020 (which corresponds to the middle of the first wave of the SARS-CoV-2 pandemic in New York City) and February 21, 2021 (which corresponds to the middle of the second wave of the pandemic in New York City), identified five main parameters of the 2-group behavior model that have the highest impact on the two response functions during the first two waves of the SARS-CoV-2 pandemic.  The identified parameters were, the maximum effective contact rates for asymptomatically and symptomatically infectious individuals ($\beta_a$ and $\beta_i$, respectively), the rate of progression of individuals leaving the exposed class ($\sigma_e$), and the parameter for the efficacy of the lockdown and other non-pharmaceutical interventions (NPIs) implemented during phase 1 of the implementation period ($\theta_{l,1}$). The sensitivity analysis results suggest that the lockdown and NPI measures implemented would have significantly reduced the burden of the pandemic during the first two waves of the pandemic in New York City (and, consequently, prevent the healthcare system from being overwhelmed during this period) if their efficacy and coverage was high enough (i.e., if the quantity, $1-\theta_{l,1}$, is high enough). This could be achieved by adequate preparation (such as stockpiling NPIs and making them widely available to the public) and early implementation of effective public health messaging campaigns encouraging adherence to intervention and mitigation measures against the pandemic. 
\\
\indent The calibrated and validated 2-group behavior model was then simulated to quantify the impact of human behavioral changes and peer influence on the dynamics of the SARS-CoV-2 pandemic in New York City during the first two waves. In particular, two sets of numerical simulations were carried out to assess the impact of the behavioral parameters (namely, the hospitalization-motivated behavioral modification parameters, $a_1$ and  $a_2$, and the influence-motivated behavioral modification parameters, $c^B_{12}$ and $c^B_{21}$) on the burden of the pandemic in New York City during the first two waves. The first set of simulations was conducted by varying the hospitalization-motivated behavioral change parameters, $a_1$ and $a_2$, while fixing all other parameters of the behavior model with their baseline values (given in Tables \ref{tab:fixedparams} and \ref{tab:fittedparams_full}). The results of these simulations showed that the first wave of the SARS-CoV-2 pandemic in New York City (as measured by the size of the peak daily hospitalizations and the cumulative SARS-CoV-2 mortality) was largely determined by the disease-motivated behavioral changes of individuals in Group 2 (the risk-tolerant group), while the impact of behavioral changes by risk-averse individuals (i.e., member of Group 1) was marginal during this time (i.e., the metrics for disease burden were more strongly correlated with changes in the parameter $a_2$, than with changes in the parameter $a_1$; see Figure \ref{fig:ai_impact}a). Hence, these simulations suggest that the first wave of the SARS-CoV-2 pandemic in New York City was largely driven by the behavior of the risk-tolerant group (as measured by changes in the parameter $a_2$), and much less so by the behavior of the risk-averse group; this may be due to the assumption that every member of the community started out in the risk-tolerant Group 1. Qualitatively similar results were obtained for the second wave, although the role of $a_1$ (i.e., its negative correlation with the disease burden) is more pronounced than it was during the first wave (see Figure \ref{fig:ai_impact}b). The simulations for the second wave also showed a threshold value of the parameter $a_2$, above which the cumulative SARS-CoV-2 mortality during this wave is minimized. In other words, this shows that if risk-tolerant individuals (i.e., members of Group 2) were to significantly decrease their contact rate in response to the level of hospitalizations in the community (i.e., if $a_2>2,200$, which roughly translates to a 40-80\% reduction of contacts at peak daily hospitalization times, see the orange curve in Figure \ref{fig:cia}), the second wave of the SARS-CoV-2 pandemic in New York City would have been largely reduced or averted entirely. \\
\indent Similar simulation results were obtained with respect to the disease burden by now varying the influence parameters, $c^B_{12}$ and $c^B_{21}$, while keeping the hospitalization-induced behavior change parameters $a_1$ and $a_2$ (and all other parameters) fixed at their baseline values (given in Tables \ref{tab:fixedparams} and \ref{tab:fittedparams_full}). In other words, this study showed that the size of the first wave of the SARS-CoV-2 pandemic in New York City (measured in terms of the the peak daily hospitalizations and the cumulative SARS-CoV-2 mortality) was largely determined by the behavior of members of the risk-tolerant group (Group 2).  In particular, the value of peak daily SARS-CoV-2 hospitalization is strongly affected by the average time taken by individuals in Group 2 to positively change their behavior (and move to the risk-averse Group 1) in response to peer pressure from members of Group 1 ($1/c^B_{12}$). For example, a 10-day reduction from the baseline value of this parameter (i.e.,from $1/c^B_{12}=30$ days to $20$ days) resulted in a 27\% reduction in peak daily hospitalization. Based on the data for the SARS-CoV-2 pandemic during the first wave in New York City \cite{nycgov_data}, this will translate to a reduction of over 400 daily SARS-CoV-2-related hospitalizations (thereby significantly lessening the burden on the City's healthcare infrastructure). This suggests that a timely implementation of an effective public health strategy that emphasizes positive behavior changes due to social influences (i.e., decrease $1/c^B_{12}$, which can be achieved through, for example, positive messaging and advocacy from major community influencers, such as celebrities, community and religious leaders, and public health officials) will significantly decrease disease burden, as well as reduce strain on the healthcare system \cite{religious,altruism_bir,japan_parady}. \\
\begin{figure}
    \centering
    \includegraphics[width=0.8\linewidth]{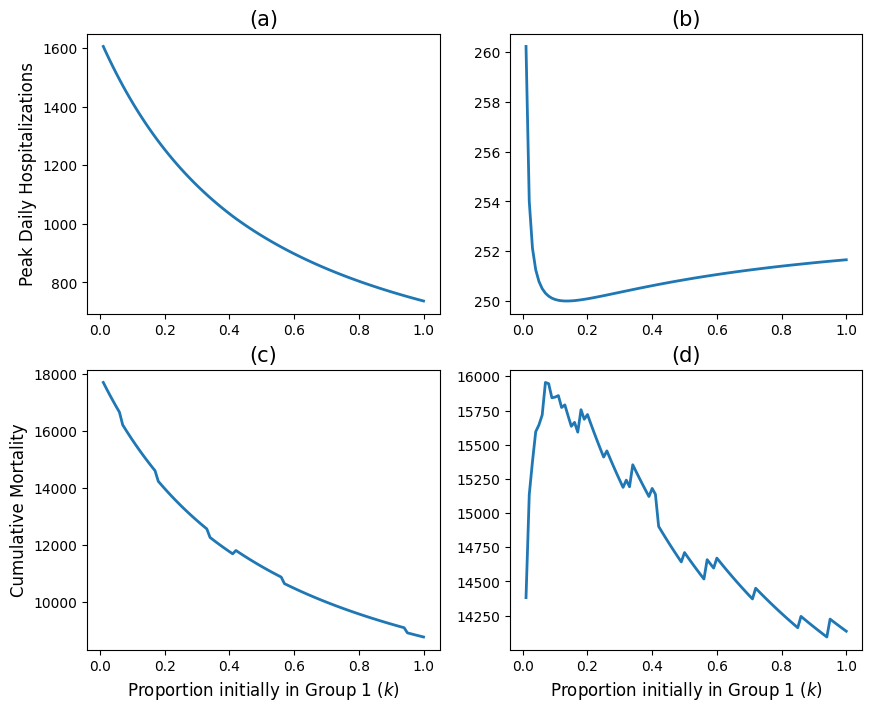}
    \caption{Effect of initial sizes of the two behavioral groups on peak daily hospitalizations and cumulative mortality during the first two waves of the SARS-CoV-2 pandemic in New York City. Simulations of the behavior model \eqref{eq:n2model} using the parameter values in Tables \ref{tab:fixedparams} and \ref{tab:fittedparams_full} and various initial sizes of the behavioral groups ($S_1(0)=kN(0)$ and $S_2(0)=(1-k)N(0)$, for $k=0,0.2,0.4,...,1$).  (a) First wave peak daily hospitalizations, as a function of initial Group 1 size ($k$). (b) Second wave peak daily hospitalizations, as a function of initial Group 1 size ($k$). (c) First wave cumulative mortality, as a function of initial Group 1 size ($k$). (d) Second wave cumulative mortality, as a function of initial Group 1 size ($k$).}
    \label{fig:ic_impact_bywave}
\end{figure}
\indent Finally, simulations to assess the impact of relative behavioral group size at pandemic outset were carried out to determine to what extent the size of the disease burden was determined by the initial attitudes of members of the community (i.e., by members of groups 1 and 2) with respect to the disease risk. This was achieved by assuming a proportion, $k\in[0,1]$, of the initial number of susceptible individuals in the community started out as members of Group 1 (i.e., they initially started in the risk-averse group), while the remaining proportion, $1-k$ , started out as members of Group 2 (i.e., they initially started in the risk-tolerant group). The simulations results obtained for this scenario (depicted in Figure \ref{fig:ic_impact}) showed a decrease in the peak daily hospitalizations and cumulative mortality with increasing values of $k$. Even a moderate increase in the baseline value of $k$ (e.g., an increase from  $k=0$ to $k=0.2$) reduced the baseline value of the peak daily hospitalizations by approximately 23\% during the first wave, and by 33\% during the second wave of the pandemic. This also decreased the cumulative mortality by approximately 22\% and 39\% by the end of the first and second waves, respectively (see Table \ref{tab:ic_impact_bywave}).However, by separating these simulation results by pandemic waves (as shown in Figure \ref{fig:ic_impact_bywave}), it can be seen that this simple negative correlation between the value of $k$ and the two disease metrics holds only for the first wave of the pandemic (see Figures \ref{fig:ic_impact_bywave}(a) and (c)), and not during the second. Specifically, for the second wave, this study showed that peak daily hospitalizations remained relatively constant (between 250 and 260 SARS-CoV-2 hospitalizations at the peak) regardless of the value of $k$ (see Figure \ref{fig:ic_impact_bywave}(b)). For cumulative mortality of the second wave of the SARS-CoV-2 pandemic in New York City, simulations suggest that the impact of increasing $k$ changes depending on the size of $k$. For small values of $k$, (e.g. $0<k<0.1$) an increase in $k$ within this interval is associated with a large \textit{increase} in second wave cumulative mortality (see Figure \ref{fig:ic_impact_bywave}(d)). This is likely due to the corresponding increase in the length of the second wave as $k$ increases in the interval $(0,0.1)$ (see Figure \ref{fig:ic_impact_w2l} in Appendix C). However, for values of $k$ larger than $0.1$, an increase in $k$ leads to a decrease in second wave cumulative mortality (see Figure \ref{fig:ic_impact_bywave}(d)). This suggests that having a significant proportion of members of the community starting out in the risk-averse group (e.g., $k>0.2$) at the beginning of the SARS-CoV-2 pandemic will lead to a much smaller pandemic overall in New York City. These results, combined with the earlier simulation results, support the conclusion that a risk-averse attitude at the early stages of the pandemic, coupled with early implementation of effective public health intervention and mitigation measures, are vital for effectively combating, suppressing or eliminating the burden of respiratory pandemics, such as the SARS-CoV-2 pandemic. \\
\indent Although the $n$-group behavior model considered in this study incorporated many key pertinent features of the SARS-CoV-2 pandemic, it is not without some limitations. For instance, since peer influence changes over time in reality, the model may be improved by relaxing the assumption that the peer influence parameters ($c^B_{ij}$, with $i,j=1,2,...,n$ and $i\neq j$) are constant (i.e., re-defining the rates $c^B_{ij}$ to be functions of time, thereby making the model to be {\it non-autonomous} \cite{nonauto_1,nonauto_2,gg_primer_gumel}). 
The model may further be improved by explicitly incorporating related heterogeneities, such as age, sex, race, political affiliation, and socio-economic status. The authors hope to consider these limitations in a future study \cite{gg_optimal_pisaneschi, gg_saadroy, gg_adaptivecontact_arthur}. Finally, the 2-group behavior model considered in this study was calibrated using disease data (7-day running average SARS-CoV-2 hospitalizations during the first two waves of the pandemic in New York City) to estimate its unknown epidemiological and behavioral parameters (although the values of the behavioral change parameters $a_1$ and $a_2$ were verified using mask compliance data from the New York City subway system (available starting mid June 2020, \cite{behaviordatanyc}), see Figure \ref{fig:noncompliance}). However, it would have been ideal to parameterize the behavior-related parameters of the model with human behavior data, which, unfortunately, was not available (for New York City during the first two waves of the pandemic) until several months after the advent of the pandemic. This study, therefore, strongly highlights the need for the collection of such behavior-related data (for risk-taking and adherence to interventions) during major respiratory pandemics, such as SARS-CoV-2, during the early stages, and making such data publicly-available for modelers to use. Such crucially-important data can be collected using surveys and digital tools (such as computer vision to determine how many people are wearing masks \cite{behaviordatanyc}). Behavioral data can also be collected through mobile phone data \cite{behaviordatacollection1} or well-done contact surveys \cite{behaviordatacollection2}.
Overall, this study shows that the prospect of the effective control of the SARS-CoV-2 pandemic in a large population stratified by human behavior changes using NPIs (if implemented in an effective and timely manner) is highly promising. 

\section*{Acknowledgments}
\noindent ABG acknowledges the support, in part, of the National Science Foundation (Grant Number: DMS-2052363; transferred to DMS-2330801). AO acknowledges the support of the University of Maryland School of Graduate Studies Dean's Fellowship. 
\addappendix
\subsection*{A $\enspace$ Equations for Special Cases of the $n-$group Behavior Model \eqref{eq:fullmodel}}\label{sec:appendixA}
\subsection{Equations for the special case with $n=2$}
 \noindent Setting $n=2$ in the behavior model \eqref{eq:n2model} gives the following 2-group behavior model for SARS-CoV-2 dynamics in New York City:
\begin{align}
    \begin{split}
\Dot{S_1}(t) &= \xi R_1(t)-\left(\theta_l\right)\left[\dfrac{c_1^A(t)S_1(t)}{N(t)}\right]\sum_{j=1}^2c_j^A(t) \left[\beta_aI_{a,j}(t)+\beta_iI_{s,j}(t)+\beta_hI_{h,j}(t)\right] \\
&+ c_{12}^B\dfrac{S_{2}(t)N_1(t)}{N(t)}-c_{21}^B\frac{S_1(t)N_2(t)}{N(t)} \\
\Dot{S_2}(t) &= \xi R_2(t)-\left(\theta_l\right)\left[\dfrac{c_2^A(t)S_2(t)}{N(t)}\right]\sum_{j=1}^2c_j^A(t)  \left[\beta_aI_{a,j}(t)+\beta_iI_{s,j}(t)+\beta_hI_{h,j}(t)\right]  \\
&+c_{21}^B\dfrac{S_1(t)N_2(t)}{N(t)}- c_{12}^B\frac{S_{2}(t)N_1(t)}{N(t)} \\
\Dot{E}_1(t) &=\left(\theta_l\right)\left[\dfrac{c_1^A(t)S_1(t)}{N(t)}\right]\sum_{j=1}^2c_j^A(t)\left[\beta_aI_{a,j}(t)+\beta_iI_{s,j}(t)+\beta_hI_{h,j}(t)\right] -\sigma_eE_1(t)
         \\
&+ c_{12}^B\frac{E_{2}(t)N_1(t)}{N(t)}-c_{21}^B\frac{E_1(t)N_2(t)}{N(t)} \\
\Dot{E}_2(t) &=\left(\theta_l\right)\left[\dfrac{c_2^A(t)S_2(t)}{N(t)}\right]\sum_{j=1}^2c_j^A(t)\left[\beta_aI_{a,j}(t)+\beta_iI_{s,j}(t)+\beta_hI_{h,j}(t)\right] -\sigma_eE_2(t) \\
&        +c_{21}^B\dfrac{E_1(t)N_2(t)}{N(t)}- c_{12}^B\dfrac{E_{2}(t)N_1(t)}{N(t)} \\
\Dot{I}_{a,1}(t) &= (1-r)\sigma_eE_1(t)-\gamma_aI_{a,1}(t) + c_{12}^B\frac{{I}_{a,2}(t)N_1(t)}{N(t)}-c_{21}^B\dfrac{{I}_{a,1}(t)N_2(t)}{N(t)} \\
\Dot{I}_{a,2}(t) &= (1-r)\sigma_eE_2(t)-\gamma_aI_{a,2}(t) +c_{21}^B\dfrac{{I}_{a,1}(t)N_2(t)}{N(t)}- c_{12}^B\dfrac{{I}_{a,2}(t)N_1(t)}{N(t)} \\
\Dot{I}_{s,1}(t) &= r\sigma_eE_1(t)-\sigma_iI_{s,1}(t)   + c_{12}^B\dfrac{{I}_{s,2}(t)N_1(t)}{N(t)}-c_{21}^B\dfrac{{I}_{s,1}(t)N_2(t)}{N(t)} \\
\Dot{I}_{s,2}(t) &= r\sigma_eE_2(t)-\sigma_iI_{s,2}(t)   + c_{21}^B\dfrac{{I}_{s,1}(t)N_2(t)}{N(t)}-c_{12}^B\dfrac{{I}_{s,2}(t)N_1(t)}{N(t)} \\
\Dot{I}_{h,1}(t) &= q\sigma_iI_{s,1}(t)-(\gamma_h + \delta_h)I_{h,1}(t)  + c_{12}^B\dfrac{{I}_{h,2}(t)N_1(t)}{N(t)}-c_{21}^B\dfrac{{I}_{h,1}(t)N_2(t)}{N(t)} \\
\Dot{I}_{h,2}(t) &= q\sigma_iI_{s,2}(t)-(\gamma_h + \delta_h)I_{h,2}(t)  + c_{21}^B\dfrac{{I}_{h,1}(t)N_2(t)}{N(t)}-c_{12}^B\dfrac{{I}_{h,2}(t)N_1(t)}{N(t)} \\
\Dot{R_1}(t) &= \gamma_aI_{a,1}(t)+(1-q)\sigma_iI_{s,1}(t)+\gamma_hI_{h,1}(t) -\xi R_1(t) + c_{12}^B\dfrac{R_{2}(t)N_1(t)}{N(t)}-c_{21}^B\dfrac{R_1(t)N_2(t)}{N(t)}\\
\Dot{R_2}(t) &= \gamma_aI_{a,2}(t)+(1-q)\sigma_iI_{s,2}(t)+\gamma_hI_{h,2}(t) -\xi R_2(t) + c_{21}^B\dfrac{R_1(t)N_2(t)}{N(t)}-c_{12}^B\dfrac{R_{2}(t)N_1(t)}{N(t)}
    \end{split}\label{eq:n2model}
\end{align}

\subsection{Equations for the behavior-free version of the 2-group model \eqref{eq:n2model}}
\noindent Define $S(t)=S_1(t)+S_2(t)$, $E(t)=E_1(t)+E_2(t)$, $I_a(t)=I_{a,1}(t)+I_{a,2}(t)$, $I_s(t)=I_{s,1}(t)+I_{s,2}(t)$, $I_h(t)=I_{h,1}(t)+I_{h,2}(t)$, and $R(t)=R_1(t)+R_2(t).$ Substituting these expressions into the 2-group model \eqref{eq:n2model}, and setting the behavior-free parameters ($a_1$, $a_2$, $c^B_{12}$, and $c^B_{21}$) to zero, gives the following behavior-free version of the 2-group model \eqref{eq:n2model}:
\begin{align}
    \begin{split}
\Dot{S}(t) &= \xi R_1(t)-\theta_l\frac{S(t)}{N(t)}\biggl(\beta_aI_{a}(t)+\beta_iI_{s}(t)+\beta_hI_{h}(t)\biggr) \\
\Dot{E}(t) &=\theta_l\frac{S(t)}{N(t)}\biggl(\beta_aI_{a}(t)+\beta_iI_{s}(t)+\beta_hI_{h}(t)\biggr) -\sigma_eE(t)\\
\Dot{I}_{a}(t) &= (1-r)\sigma_eE(t)-\gamma_aI_{a}(t)\\
\Dot{I}_{s}(t) &= r\sigma_eE(t)-\sigma_iI_{s}(t)\\
\Dot{I}_{h}(t) &= q\sigma_iI_{s}(t)-(\gamma_h + \delta_h)I_{h}(t)\\
\Dot{R}(t) &= \gamma_aI_{a}(t)+(1-q)\sigma_iI_{s}(t)+\gamma_hI_{h}(t) -\xi R(t)\\
    \end{split}\label{eq:bfmodel}
\end{align}

\subsection*{B $\enspace$ Proof of Theorem \ref{thm:LASG1DFE}}\label{sec:appendixB}
\stepcounter{section}
\renewcommand{\thesection}{B}
\renewcommand{\thefigure}{\thesection.\arabic{figure}}
\noindent 
\begin{proof}
The proof of Theorem \ref{thm:LASG1DFE} is based on linearizing the 2-group behavior model \eqref{eq:n2model} around the G1DFE and taking advantage of the properties of Metzler matrices as described in \cite{godsend}. It is convenient to reorder the equations of the 2-group model \eqref{eq:n2model} such that the equations corresponding to the non-disease (i.e., susceptible and recovered) compartments ($S_1,S_2,R_1$ and $R_2$) are first, followed by the equations corresponding to the eight infected compartments ($E_1, E_2, I_{a,1}, I_{a,2},\cdots,I_{h,2}$). The Jacobian of the reordered system, evaluated at the G1DFE, is given by the following block upper triangular matrix:
    \begin{align*}
    J\lvert_{G1DFE}=
        \begin{bmatrix}
            A_{11} & A_{12} \\
            \mathbf{0}_{8\times 4} & A_{22}
        \end{bmatrix},
    \end{align*}
\noindent where,
    
\begin{centering}
\begin{align}
        A_{11} =
\begin{bmatrix}
0 & c_{12}^B - c_{21}^B & \xi & -c_{21}^B \\
0 & c_{21}^B - c_{12}^B & 0 & c_{21}^B + \xi \\
0 & 0 & -\xi & c_{12}^B \\
0 & 0 & 0 & -c_{12}^B - \xi
\end{bmatrix},
\end{align}
\end{centering}

\begin{align}
A_{22} = \begin{bmatrix}
-\sigma_e & c^B_{12} & \beta_a \theta_l & \beta_a \theta_l & \beta_i \theta_l & \beta_i \theta_l & \beta_h \theta_l & \beta_h \theta_l \\
0 & -c^B_{12} - \sigma_e & 0 & 0 & 0 & 0 & 0 & 0 \\
(1-r)\sigma_e  & 0 & -\gamma_a & c^B_{12} & 0 & 0 & 0 & 0 \\
0 & (1-r )\sigma_e  & 0 & -c^B_{12} - \gamma_a & 0 & 0 & 0 & 0 \\
r \sigma_e & 0 & 0 & 0 & -\sigma_i & c^B_{12} & 0 & 0 \\
0 & r \sigma_e & 0 & 0 & 0 & -c^B_{12} - \sigma_i & 0 & 0 \\
0 & 0 & 0 & 0 & q \sigma_i & 0 & -\delta_h - \gamma_h & c^B_{12} \\
0 & 0 & 0 & 0 & 0 & q \sigma_i & 0 & -c^B_{12} - \delta_h - \gamma_h \\
\end{bmatrix},
    \end{align}
\noindent and $\mathbf{0_{8\times 4}}$ is an eight by four zero matrix. The matrix $A_{12}$ is the $4\times 8$ matrix corresponding to the derivatives of the non-disease compartments ($S_1,S_2,R_1$ and $R_2$) with respect to the eight disease compartments ($E_1, E_2, I_{a,1}, I_{a,2},\cdots,I_{h,2}$) evaluated at G1DFE. Since the matrix $A_{12}$ is not relevant in the computation of the eigenvalues of $J\lvert_{G1DFE}$, it is not explicitly defined or given here. The eigenvalues of the block upper triangular matrix $J\lvert_{G1DFE}$ are the eigenvalues of its diagonal blocks ($A_{11}$ and $A_{22}$).  It can be seen that the eigenvalues of the matrix $A_{11}$ are: 
\[\sigma(A_{11})=\{0,c^B_{21}-c^B_{12},-\xi,-c^B_{12}-\xi\},\]
\noindent from which it follows that three or the four eigenvalues of the matrix $A_{11}$ are negative (with one eigenvalue being identically equal to zero) provided that the following inequality (termed {\it Condition 1}) is satisfied:
\begin{align}\label{cond:TC1_a}
    \text{Condition 1: }& c^B_{12}>c^B_{21}.
\end{align}
\noindent 
Unlike in the case of the matrix $A_{11}$, the eigenvalues of the matrix $A_{22}$ cannot be readily computed directly. The eigenvalues of the matrix $A_{22}$ will be obtained using the properties of Metzler-stable matrices (recall that a Metzler-stable matrix is a Metzler matrix with all its eigenvalues having negative real part \cite{godsend}). This will be achieved by using Proposition
 3.1 of \cite{godsend}, which is given below:
\begin{proposition}\textnormal{(Proposition 3.1 of \cite{godsend})}
    Let $M$ be a Metzler matrix, which is block decomposed:
    \[M=\begin{bmatrix}
        A & B \\
        C & D
    \end{bmatrix}\]
    \noindent where $A$ and $D$ are square matrices. Then $M$ is Metzler stables if and only if $A$ and $D-CA^{-1}B$ are Metzler stable.
\end{proposition}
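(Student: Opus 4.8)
The plan is to recognise this as a standard fact about non-singular M-matrices in disguise and to prove it directly from the characterisation: a matrix $M$ is Metzler-stable if and only if $-M$ is a non-singular M-matrix, which (since $-M$ is then in particular a $Z$-matrix, i.e.\ has non-positive off-diagonal entries) is equivalent to $M$ being invertible with $-M^{-1}\ge 0$ entrywise. I would use freely two classical facts: (i) every Metzler-stable matrix is invertible and satisfies $-M^{-1}\ge 0$; and (ii) every principal submatrix of a Metzler-stable matrix is again Metzler-stable, which follows from Perron--Frobenius monotonicity by writing $-M=sI-P$ with $P\ge 0$ and $s>\rho(P)$, since any principal submatrix of $P$ has spectral radius at most $\rho(P)<s$.

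First I would check that the Schur complement $\Sigma:=D-CA^{-1}B$ is well-defined and is itself a Metzler matrix whenever $A$ is Metzler-stable, so that the conclusion "$\Sigma$ is Metzler-stable" is even meaningful. Invertibility of $A$ is immediate from Metzler-stability. Since $B$ and $C$ are off-diagonal blocks of the Metzler matrix $M$, we have $B\ge 0$ and $C\ge 0$, and since $A$ is Metzler-stable, $-A^{-1}\ge 0$; hence $-CA^{-1}B=C(-A^{-1})B\ge 0$, so the off-diagonal entries of $\Sigma$ are those of the diagonal block $D$ (non-negative) plus non-negative quantities, i.e.\ $\Sigma$ is Metzler.

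For the forward implication, assume $M$ is Metzler-stable. Then $A$, being a principal submatrix of $M$, is Metzler-stable by (ii), so $\Sigma$ is well-defined and Metzler. The block-inverse (Schur) identity gives that the bottom-right block of $M^{-1}$ equals $\Sigma^{-1}$; since $-M^{-1}\ge 0$, this block $-\Sigma^{-1}$ is non-negative, so $-\Sigma$ is a $Z$-matrix with non-negative inverse, hence a non-singular M-matrix, hence $\Sigma$ is Metzler-stable. For the converse, assume $A$ and $\Sigma$ are Metzler-stable, so that $-A^{-1}\ge 0$ and $-\Sigma^{-1}\ge 0$, while $-M$ is a $Z$-matrix because $M$ is Metzler. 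Writing out the four blocks of $-M^{-1}$ from the standard block-inverse formula (off-diagonal blocks $A^{-1}B\Sigma^{-1}$ and $\Sigma^{-1}CA^{-1}$, bottom-right block $\Sigma^{-1}$, top-left block $-A^{-1}-A^{-1}B\Sigma^{-1}CA^{-1}$), a short sign-chase shows each is entrywise non-negative: e.g.\ $A^{-1}B\Sigma^{-1}=(-A^{-1})B(-\Sigma^{-1})\ge 0$ and similarly $\Sigma^{-1}CA^{-1}\ge 0$, while $CA^{-1}=-C(-A^{-1})\le 0$ so that $-A^{-1}B\Sigma^{-1}CA^{-1}\ge 0$, which together with $-A^{-1}\ge 0$ handles the top-left block. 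Hence $-M$ is a $Z$-matrix with $(-M)^{-1}\ge 0$, so $-M$ is a non-singular M-matrix and $M$ is Metzler-stable.

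The only step I expect to require genuine care is the bookkeeping in the converse direction, where one must verify non-negativity of all four blocks of $-M^{-1}$ at once; the top-left block is the delicate one, since its non-negativity becomes transparent only after observing that $CA^{-1}\le 0$. As a quick alternative, one may instead quote the Schur-complement characterisation of non-singular M-matrices from the M-matrix literature and apply it to the $Z$-matrix $-M$; I would mention this but present the self-contained sign-chasing argument as the main route.
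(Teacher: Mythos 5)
Your proof is correct, but note that the paper does not prove this statement at all: it is imported verbatim as Proposition 3.1 of the cited reference and used as a black box in the proof of Theorem \ref{thm:LASG1DFE}. Your argument is the standard one from the M-matrix literature, and every step checks out: the equivalence ``$M$ Metzler-stable $\iff$ $-M$ is a nonsingular M-matrix $\iff$ $M$ is invertible with $-M^{-1}\ge 0$'' is the classical Z-matrix characterization; the heredity of Metzler-stability under passage to principal submatrices via $\rho(P[\alpha])\le\rho(P)<s$ is right (using that the eigenvalue of maximal real part of a nonnegative matrix is its spectral radius); the observation that $\Sigma=D-CA^{-1}B$ is Metzler because $B,C\ge 0$ and $-A^{-1}\ge 0$ is exactly what makes the statement well posed; and the sign-chase on the four blocks of $-M^{-1}$ in the converse direction is accurate, including the delicate top-left block, where $-A^{-1}B\Sigma^{-1}CA^{-1}=(-A^{-1})B(-\Sigma^{-1})C(-A^{-1})\cdot(-1)^{3}\cdot(-1)^{0}$ resolves to a product of nonnegative factors as you claim. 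Two small points worth making explicit if you write this up: in the forward direction you should record that $\det M=\det A\,\det\Sigma$ (from the block LDU factorization) guarantees $\Sigma$ is invertible before identifying $\Sigma^{-1}$ with the bottom-right block of $M^{-1}$; and you should state which version of the Z-matrix/M-matrix equivalence you are quoting, since the whole proof leans on it. What your self-contained route buys over the paper's citation is transparency about why the hypothesis ``$M$ Metzler'' is essential -- it is exactly what makes $B,C\ge 0$ and hence makes the Schur complement Metzler and the sign-chase close.
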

\noindent To apply this method, $A_{22}$ is broken down into smaller (and more tractable) matrices.  Specifically, this is done by re-writing $A_{22}$ as:
\begin{align*}
    A_{22} = \begin{bmatrix}
        P & Q \\
        R  & S 
    \end{bmatrix}
\end{align*}
where, 
\begin{align}
    P=\begin{bmatrix}
-\sigma_e & c_{12}^B & \beta_a \theta_l & \beta_a \theta_l \\
0 & -c_{12}^B - \sigma_e & 0 & 0 \\
\sigma_e (1-r) & 0 & -\gamma_a & c_{12}^B \\
0 & \sigma_e (1-r) & 0 & -c_{12}^B - \gamma_a
\end{bmatrix}, 
Q=\begin{bmatrix}
\beta_i \theta_l & \beta_i \theta_l & \beta_h \theta_l & \beta_h \theta_l \\
0 & 0 & 0 & 0 \\
0 & 0 & 0 & 0 \\
0 & 0 & 0 & 0
\end{bmatrix}\\
R=\begin{bmatrix}
r\sigma_e & 0 & 0 & 0 \\
0 & r\sigma_e & 0 & 0 \\
0 & 0 & 0 & 0 \\
0 & 0 & 0 & 0 
\end{bmatrix}, S=\begin{bmatrix}
-\sigma_i & c^B_{12} & 0 & 0 \\
0 & -c^B_{12}-\sigma_i & 0 & 0 \\
q\sigma_i & 0 & -\delta_h-\gamma_h & c^B_{12} \\
0 & q\sigma_i  & 0 & -c^B_{12}-\delta_h-\gamma_h
\end{bmatrix}.
\end{align}
It should be recalled from Proposition 3.1 of \cite{godsend} that the matrix $A_{22}$ is Metzler-stable if and only if the matrices $P$ and $S-RP^{-1}Q$ are Metzler-stable. As a principal sub-matrix of a Metzler matrix, $P$ is clearly a Metzler matrix \cite{godsend}. To confirm the Metzler-stability of $P$, it is necessary to compute its eigenvalues, which are given by:
\[\sigma(P)=\{-c^B_{12}-\gamma_a, -c^B_{12}-\sigma_e, -\frac{\gamma_a+\sigma_e}{2}\pm \lambda_{P}\},\]
\noindent where, 
\[\lambda_{P} = \frac{1}{2}\sqrt{(\gamma_a-\sigma_e)^2+4(1-r)\beta_a\sigma_e\theta_l},\] 
from which it follows that all eigenvalues of $P$ are strictly negative (making $P$ Metzler-stable) if and only if the following condition (termed {\it Condition 2)} is satisfied:
\begin{align}\label{cond:TC2_a}
    \text{Condition 2: }& \frac{(1-r)\beta_a\theta_l}{\gamma_a} < 1.
\end{align}
Hence, if Condition 2 holds, the matrix $P$ is Metzler-stable. The next step is to compute the matrix $S-RP^{-1}Q$, as done below:
\begin{align}\label{eq:SRPQ}
    S-RP^{-1}Q = \begin{bsmallmatrix}
\frac{\beta_i \gamma_a r \theta_l}{\gamma_a -(1-r) \beta_a \theta_l} - \sigma_i & c_{12}^B + \frac{\beta_i \gamma_a r \theta_l}{\gamma_a -(1-r) \beta_a \theta_l} & \frac{\beta_h \gamma_a r \theta_l}{\gamma_a - (1-r)\beta_a \theta_l} & \frac{\beta_h \gamma_a r \theta_l}{\gamma_a - (1-r)\beta_a \theta_l} \\
0 & -c_{12}^B - \sigma_i & 0 & 0 \\
q \sigma_i & 0 & -\delta_h - \gamma_h & c_{12}^B \\
0 & q \sigma_i & 0 & -c_{12}^B - \delta_h - \gamma_h
\end{bsmallmatrix}.
\end{align}
It follows that, if Condition 2 (given by \eqref{cond:TC2_a}) holds, then $S-RP^{-1}Q$ is a Metzler matrix, but its eigenvalues are not easily computed. Here, too, Proposition 3.1 of \cite{godsend} can be applied to the Metzler matrix $S-RP^{-1}Q$ to enable the computation of its eigenvalues based (on computing the eigenvalues for a simpler two by two matrix), as follows:
\begin{align*}
    S-RP^{-1}Q = \begin{bmatrix}
        W & X \\
        Y  & Z 
    \end{bmatrix},
\end{align*}
where, 
\begin{align}\label{eq:W}
    W = \begin{bmatrix}
\frac{\beta_i \gamma_a r \theta_l}{\gamma_a -(1-r) \beta_a \theta_l} - \sigma_i & c_{12}^B + \frac{\beta_i \gamma_a r \theta_l}{\gamma_a -(1-r) \beta_a \theta_l} \\
0 & -c_{12}^B - \sigma_i
\end{bmatrix}, X = \begin{bmatrix}
\frac{\beta_h \gamma_a r \theta_l}{\gamma_a -(1-r) \beta_a \theta_l} & \frac{\beta_h \gamma_a r \theta_l}{\gamma_a -(1-r) \beta_a \theta_l} \\
0 & 0
\end{bmatrix},\\
Y = \begin{bmatrix}
    q\sigma_i & 0 \\
    0 & q\sigma_i
\end{bmatrix} \text{, and } Z = \begin{bmatrix}
    -\delta_h-\gamma_h & c^B_{12} \\
    0 & -c^B_{12}-\delta_h-\gamma_h
\end{bmatrix},
\end{align}
from which it follows that the matrix $S-RP^{-1}Q$ will be Metzler-stable if and only if the matrices $W$ and $Z-YW^{-1}X$ are Metzler-stable. The matrix $W$ is a principal sub-matrix of a Metzler matrix.  Hence, it is a Metzler matrix and its eigenvalues are given by:
\[\sigma(W)=\biggl\{\frac{\beta_i \gamma_a r \theta_l}{\gamma_a -(1-r) \beta_a \theta_l} - \sigma_i, - c_{12}^B - \sigma_i\biggr\},\]
which are strictly negative if the following condition (termed {\it Condition 3}) holds:
\begin{align}\label{cond:TC3_a}
    \text{Condition 3: }& \frac{r\beta_i\theta_l}{\sigma_i} + \frac{(1-r) \beta_a \theta_l}{\gamma_a} < 1
\end{align} 
So, if  Condition 3 (\eqref{cond:TC3_a}) holds, then the matrix $W$ is Metzler-stable. Here, too, the  matrix $Z-YW^{-1}X$ needs to be computed, as below:
\begin{align}
    Z-YW^{-1}X = \begin{bmatrix}
    \frac{\beta_h \gamma_a r q  \sigma_i \theta_l }{- \beta_i \gamma_a r \theta_l + \sigma_i(\gamma_a  -(1-r) \beta_a\theta_l)} - \gamma_h - \delta_h & c_{12}^B + \frac{ \beta_h \gamma_a r q \sigma_i \theta_l}{- \beta_i \gamma_a r \theta_l + \sigma_i(\gamma_a  -(1-r) \beta_a\theta_l)} \\
    0 & - c_{12}^B - \delta_h - \gamma_h
    \end{bmatrix}.
\end{align}
To prove that $Z-YW^{-1}X$ is a  Metzler matrix, the following inequality must hold:
\[c_{12}^B + \frac{ \beta_h \gamma_a r q \sigma_i \theta_l}{- \beta_i \gamma_a r \theta_l + \sigma_i(\gamma_a  -(1-r) \beta_a\theta_l)} > 0\]
as $c_{12}^B, \beta_h \gamma_a r q \sigma_i \theta_l > 0$, which can be simplified to:
\[- \beta_i \gamma_a r \theta_l + \sigma_i(\gamma_a  -(1-r) \beta_a\theta_l) > 0,\]
which follows directly from Condition 3 above. Hence, $Z-YW^{-1}X$ is a Metzler matrix. To prove that this matrix is Metzler-stable, consider, first of all, its eigenvalues, which are given by:
\[\sigma(Z-YW^{-1}X)=\biggl\{\frac{\beta_h \gamma_a r q  \sigma_i \theta_l }{\gamma_a \sigma_i - \beta_a \sigma_i \theta_l - \beta_i \gamma_a r \theta_l + \beta_a r \sigma_i \theta_l} - \gamma_h - \delta_h, - c_{12}^B - \delta_h - \gamma_h\biggr\}.\]
These eigenvalues will be strictly negative if the following condition (termed {\it Condition 4}) holds:
\begin{align}\label{cond:TC4_a}
    \text{Condition 4: }& \theta_l\biggl(\frac{\beta_h  r q  }{\gamma_h + \delta_h} + \frac{(1-r)\beta_a}{\gamma_a} + \frac{r\beta_i }{\sigma_i}\biggr) = \RR_c < 1.
\end{align}
Hence, if Condition 4 holds, the matrix $Z-YW^{-1}X$ is Metzler-stable. This (in conjunction with the Metzler-stability of the matrix $W$) implies that the matrix $S-RP^{-1}Q$ is Metzler-stable, which implies that, if Conditions 1 through 4 hold, the matrix $A_{22}$ is Metzler-stable. Thus (since $A_{22}$ is Metzler-stable), all eigenvalues of the matrix $A_{22}$ have negative real parts. Furthermore, it should be noted that Condition 4 (given by \eqref{cond:TC4_a}) also contains Conditions 2 and 3 (given by \eqref{cond:TC2_a} and \eqref{cond:TC3_a}, respectively). Thus, if Conditions 1 and 4 hold, the matrix $A_{22}$ is Metzler-stable. Furthermore, the eigenvalues of the Jacobian matrix evaluated at G1DFE are given by the union of the eigenvalues of $A_{11}$ and $A_{22}$. That is, 
\[\sigma(J\lvert_{G1DFE})=\sigma(A_{11})\cup \sigma(A_{22}).\]
Thus, if Conditions 1 and 4 hold, the matrix $A_{22}$ is Metzler-stable (meaning its eigenvalues have negative real parts) and the matrix $A_{11}$ has three eigenvalues with negative real parts and one eigenvalue identically equal to zero, so 11 of the 12 eigenvalues of the Jacobian $J\lvert_{G1DFE}$ of the behavior model \eqref{eq:n2model} evaluated around the G1DFE have negative real parts, and the remaining eigenvalue is identically zero.  Hence, the G1DFE is marginally-stable \cite{strogatz} whenever Conditions 1 (i.e., $c^B_{12}>c^B_{21}$) and 4 (i.e., $\RR_c<1$) hold. 
\end{proof}

\subsection*{C $\enspace$ Additional Simulation Figures}\label{sec:appendixC}
\stepcounter{section}
\renewcommand{\thesection}{C}
\renewcommand{\thefigure}{\thesection.\arabic{figure}}

\begin{figure}[H]
\centering
\textbf{(a)} \quad\quad\quad\quad\quad\quad\quad\quad\quad\quad\quad \textbf{(b)} \quad\quad\quad\quad\quad\quad\quad\quad\quad\quad\quad \textbf{(c)}\\
  \includegraphics[width=.9\linewidth,scale=1]{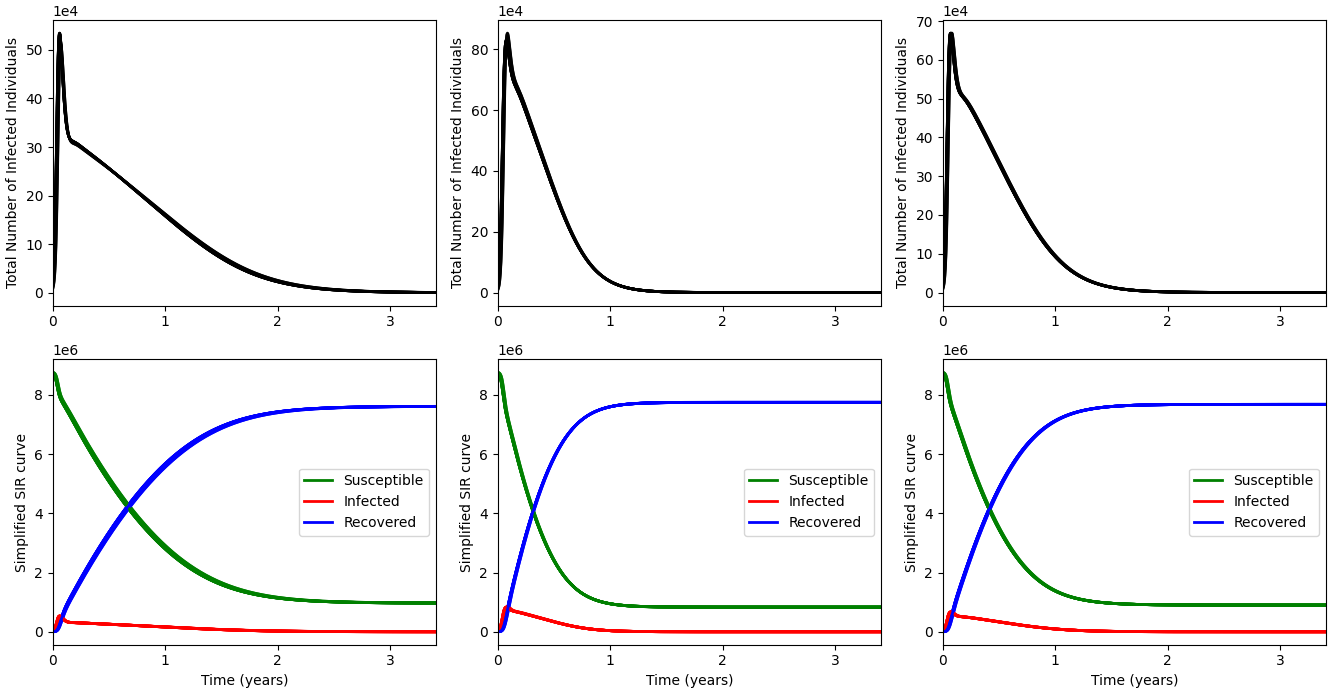}
  \caption{Simulations of the 2-group model \eqref{eq:n2model}, showing the profile of the total number of infected individuals (top) and the total number of individuals in the community, stratified by disease state (susceptible, infected, recovered), as a function of time for various initial conditions. Column (a): $c^B_{12}=0.5,c^B_{21}=0.25$ ($\Gamma=2>1$). Column (b): $c^B_{12}=0.25, c^B_{21}=0.5$ ($\Gamma=0.5<1$). Column (c): $c^B_{12}=c^B_{21}=0.5$ ($\Gamma=1$). In all panels all other parameter values used in these simulations are as given in Tables \ref{tab:fixedparams} and \ref{tab:fittedparams_full}, but with $\theta_l=1$, $\beta_a=1$, $\beta_i=0.5$, and $\xi=0$ (so that, $\RR_c=7.8>1$ and infection does not induce permanent immunity). These simulations show that when $\RR_c>1$ and infection does not induce permanent immunity, solutions of the 2-group model \eqref{eq:n2model} satisfy the Kermack-Mckendrick constraint $S(\infty)-S(0)>0$ \cite{gg_primer_gumel}, regardless of the value of $\Gamma$.}\label{fig:xi0}
\end{figure}

\begin{figure}[H]
\centering
\textbf{(a)}\hspace{80mm}\textbf{(b)}\\
  \includegraphics[width=85mm,scale=1]{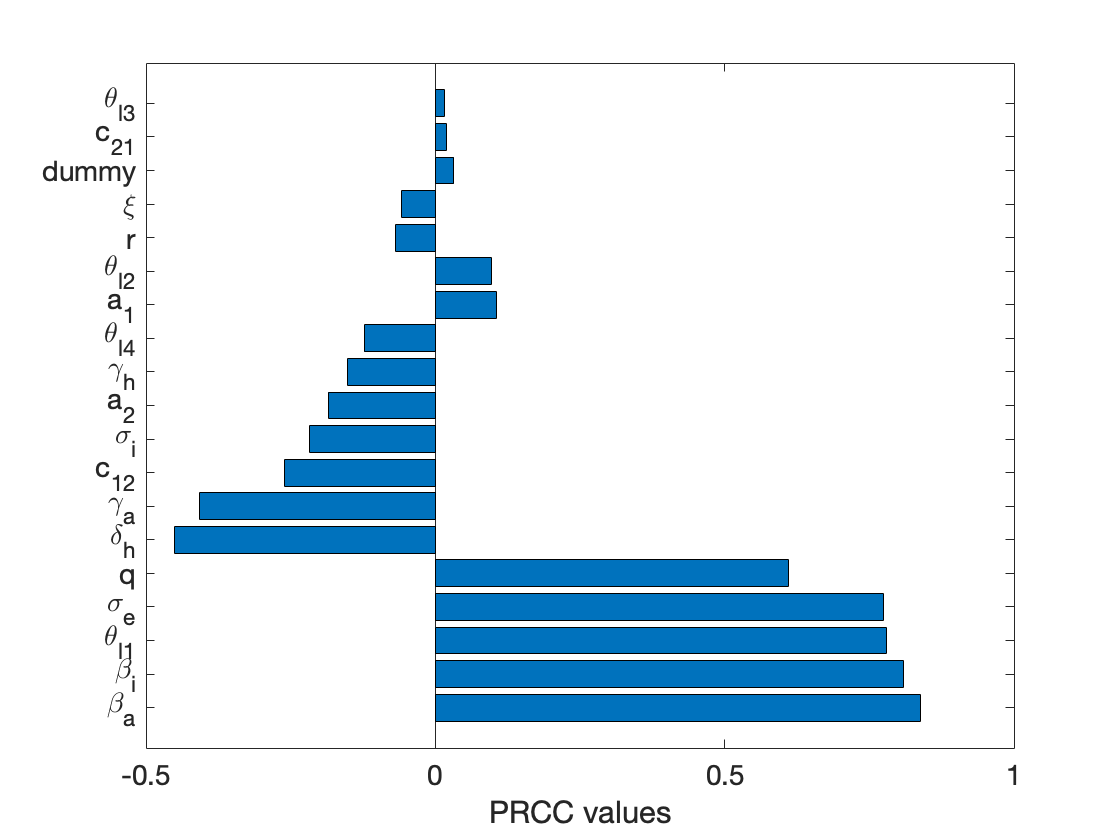}\includegraphics[width=85mm,scale=1]{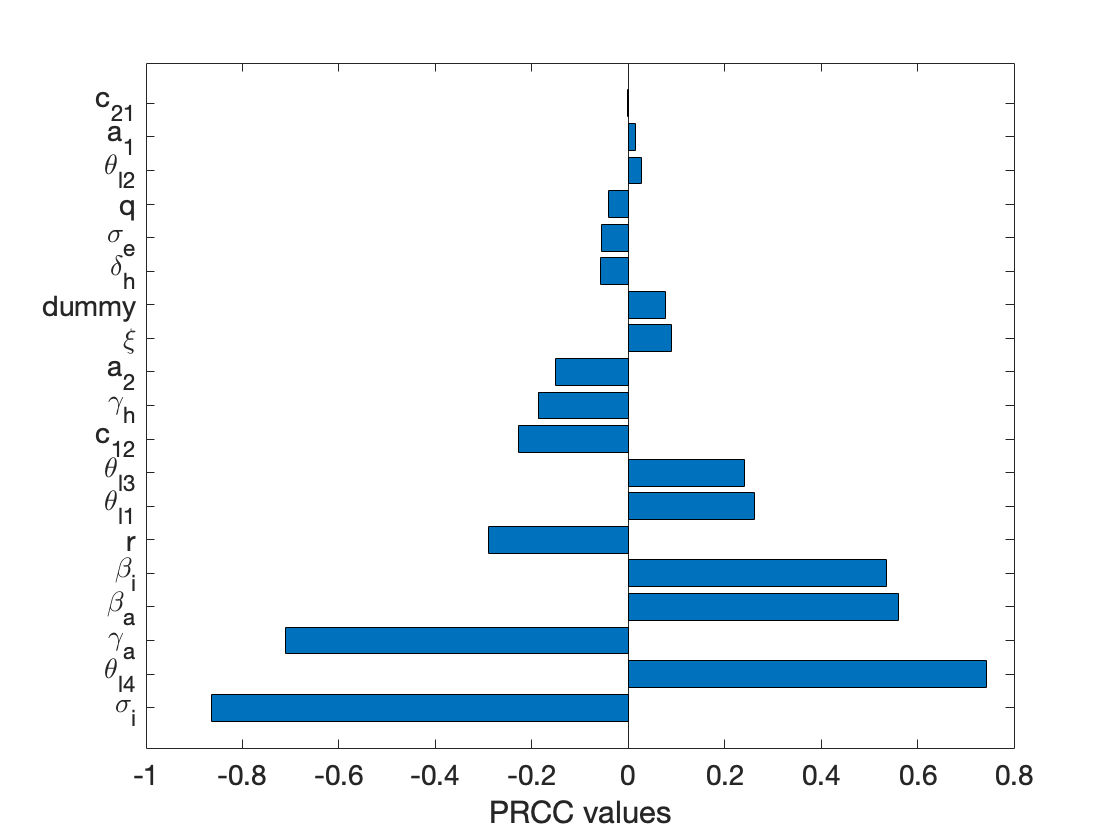}
  \caption{Partial Rank Correlation Coefficient (PRCC) values for the 18 parameters of the model \eqref{eq:n2model} with respect to peak daily hospitalizations at (a) April 2020 and (b) February 2021 using parameter intervals as given in Table \ref{tab:cube}.}\label{fig:prcc_hosp}
\end{figure}

\begin{figure}[H]
\centering
\textbf{(a)}\hspace{80mm}\textbf{(b)}\\
  \includegraphics[width=90mm,scale=1]{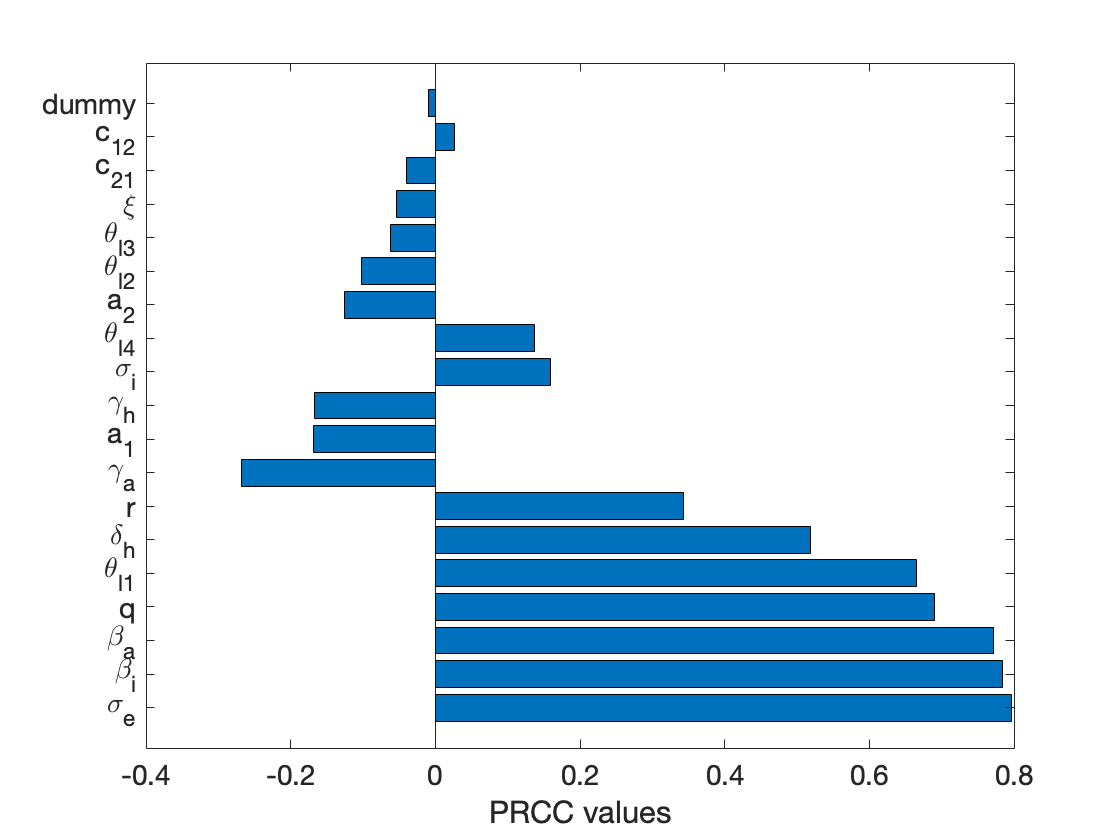}\includegraphics[width=90mm,scale=1]{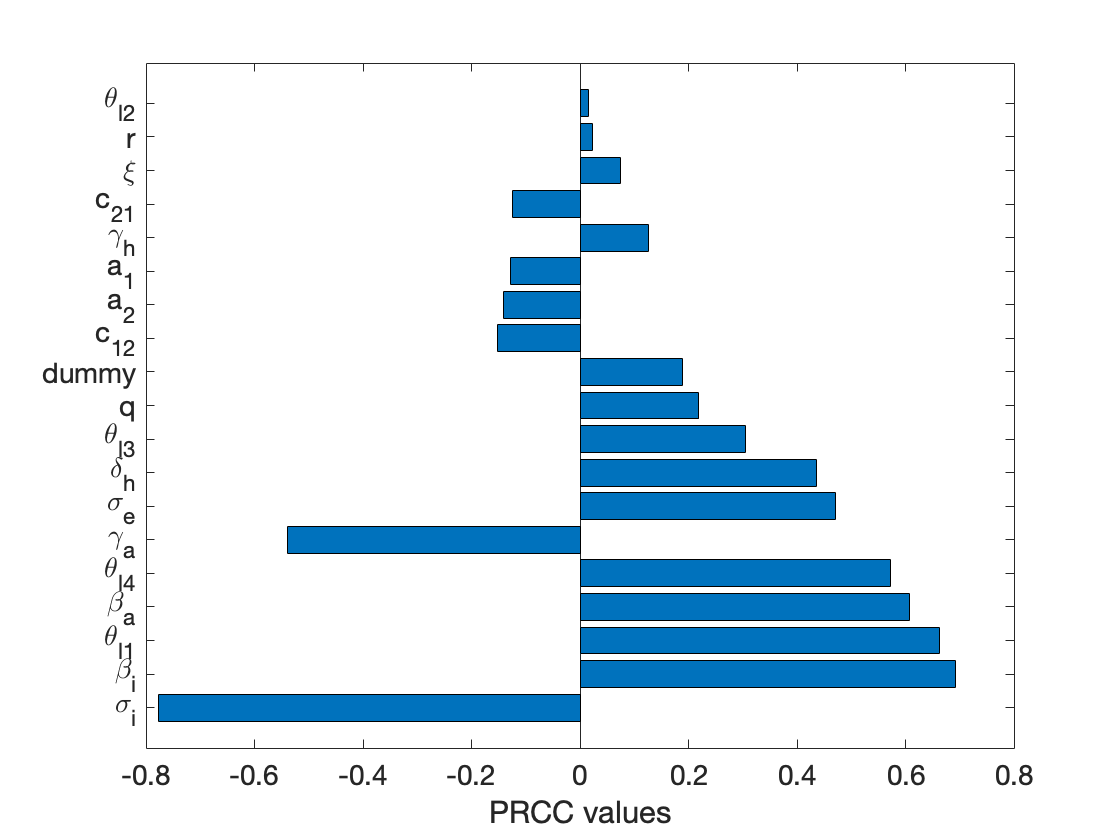}
  \caption{Partial Rank Correlation Coefficient (PRCC) values for the 18 parameters of the model \eqref{eq:n2model} with respect to cumulative mortality at (a) April 2020 and (b) February 2021 using parameter intervals as given in Table \ref{tab:cube}.}\label{fig:prcc_deaths}
\end{figure} 

\begin{figure}[H]
    \centering
    \includegraphics[width=0.65\linewidth]{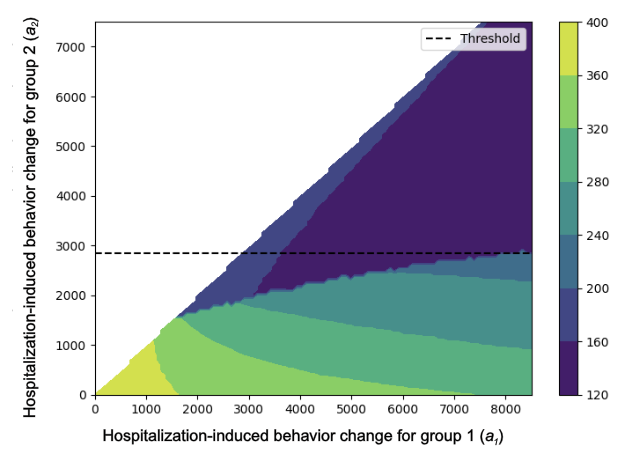}
    \caption{Effect of the hospitalization-motivated behavioral modifiers, $a_1$ and $a_2$, on the length (in days) of the second wave of the SARS-CoV-2 pandemic in New York City. Simulations of the model \eqref{eq:n2model} with the baseline parameter values in Table \ref{tab:fixedparams} and \ref{tab:fittedparams_full}, and various values of $a_1$ and $a_2$. The end of the second wave was determined by the first day of the simulation after the start of the second wave wherein the total number of infected individuals (including exposed, asymptomatically infectious, symptomatically infectious, and hospitalized individuals) fell below 200.}
    \label{fig:ai_w2len}
\end{figure}

\begin{figure}
    \centering
    \includegraphics[width=0.7\linewidth]{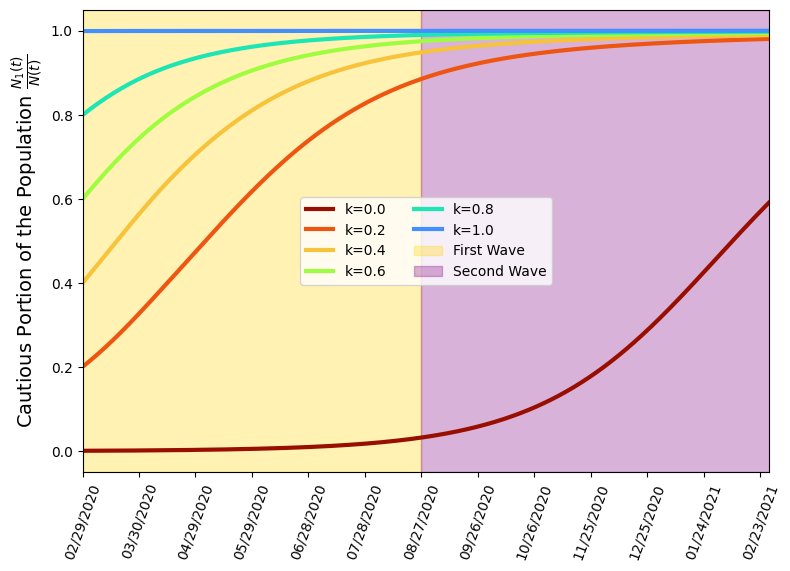}
    \caption{Effect of initial sizes of the two behavioral groups on the relative size of Group 1 ($\frac{N_1(t)}{N(t)}$)  during the first two waves of the SARS-CoV-2 pandemic in New York City. Simulations of the 2-group behavior model \eqref{eq:n2model} using the parameter values in Tables \ref{tab:fixedparams} and \ref{tab:fittedparams_full} and various initial sizes of the behavioral groups ($S_1(0) = kN(0)$ and $S_2(0) = (1 - k)N(0)$, for $k = 0, 0.2, 0.4, ..., 1)$.}
    \label{fig:n1_k}
\end{figure}

\begin{figure}[H]
    \centering
    \includegraphics[width=0.5\linewidth]{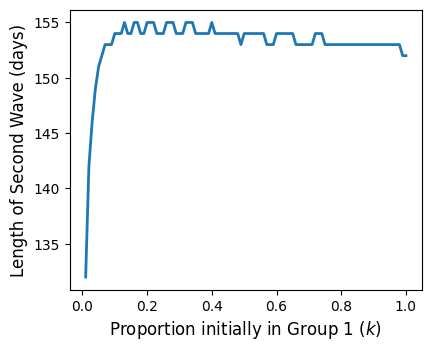}
    \caption{Effect of initial sizes of the behavioral groups on the length (in days) of the second wave of the SARS-CoV-2 pandemic in New York City. Simulations of the model \eqref{eq:n2model} with the baseline parameter values in Table \ref{tab:fixedparams} and \ref{tab:fittedparams_full} and various initial sizes of behavioral groups ($S_1(0)=kN(0)$ and $S_2(0)=(1-k)N(0)$ for $0\leq k\leq 1$). The end of the second wave was determined by the first day of the simulation after the start of the second wave wherein the total number of infected individuals (including exposed, asymptomatically infectious, symptomatically infectious, and hospitalized individuals) fell below 200.}
    \label{fig:ic_impact_w2l}
\end{figure}
\newpage
\bibliographystyle{unsrt}
\bibliography{references.bib}

\end{document}